\author[A. Hammerlindl]{Andy Hammerlindl}
\address{School of Mathematical Sciences, Monash University, Victoria 3800 Australia} \urladdr{ http://users.monash.edu.au/~ahammerl/}  \email{andy.hammerlindl@monash.edu}
\author[R. Potrie]{Rafael Potrie}
\address{CMAT, Facultad de Ciencias, Universidad de la Rep\'ublica, Uruguay}
\urladdr{www.cmat.edu.uy/$\sim$rpotrie}\email{rpotrie@cmat.edu.uy}
\title{Classification of systems with center-stable tori}
\thanks{R.P.~was partially supported by CSIC group 618 and by MathAmSud-Physeco.}
\def\saveenum{\xdef\@savedenum{\the\c@enumi\relax}}
\def\resetenum{\global\c@enumi\@savedenum}
\newcommand{\HHU}{Rodriguez Hertz, Rodriguez Hertz, and Ures}
\newcommand{\WLOG}{Without loss of generality}
\newcommand{\bbR}{\mathbb{R}}
\newcommand{\bbZ}{\mathbb{Z}}
\newcommand{\bbT}{\mathbb{T}}
\newcommand{\Es}{E^s}
\newcommand{\Ec}{E^c}
\newcommand{\Eu}{E^u}
\newcommand{\Ecu}{E^{cu}}
\newcommand{\Ecs}{E^{cs}}
\newcommand{\Ws}{W^s}
\newcommand{\Wc}{W^c}
\newcommand{\Wu}{W^u}
\newcommand{\inv}{^{-1}}
\newcommand{\cF}{\mathcal{F}}
\newcommand{\Fcu}{\mathcal{F}^{cu}}
\newcommand{\Fcs}{\mathcal{F}^{cs}}
\newcommand{\Ku}{K^u}
\newcommand{\Ks}{K^s}
\newcommand{\Om}{\Omega}
\newcommand{\piA}{\pi^A}
\newcommand{\del}{\partial}
\newcommand{\delcs}{\partial^{cs} \Om}
\newcommand{\without}{\setminus}
\newcommand{\Omin}{\Om^{\circ}}
\newcommand{\Lcs}{L^{cs}}
\newcommand{\Lcu}{L^{cu}}
\newcommand{\tM}{\tilde{M}}
\newcommand{\ep}{\epsilon}
\newcommand{\lam}{\lambda}
\newcommand{\gam}{\gamma}
\newcommand{\qandq}{\quad \text{and} \quad}
\newcommand{\dist}{\operatorname{dist}}
\newcommand{\length}{\operatorname{length}}
\newcommand{\volume}{\operatorname{volume}}
\newcommand{\id}{\operatorname{id}}
\newcommand{\pis}{\pi^s}
\newcommand{\piu}{\pi^u}
\newcommand{\Hu}{H^u}
\newcommand{\Hs}{H^s}
\newcommand{\cT}{\mathcal{T}}
\newcommand{\subof}{\subset}
\newcommand{\sans}{\setminus}
\newcommand{\ti}{\times}
\numberwithin{equation}{section}
\newtheorem{thm}[equation]{Theorem}
\newtheorem{cor}[equation]{Corollary}
\newtheorem{lemma}[equation]{Lemma}
\newtheorem{prop}[equation]{Proposition}
\theoremstyle{remark}
\newtheorem*{remark} {\textbf{Remark}}
\providecommand{\acknowledgement}{{\noindent \textbf{Acknowledgements.}}\quad}
\begin{document}

\maketitle

\begin{abstract}
    This paper gives a classification of partially hyperbolic systems
    in dimension 3 which have at least one torus tangent to the
    center-stable bundle.
\end{abstract}
\section{Introduction} %

A long-standing question in the study of partially hyperbolic dynamical
systems was whether a system with one-dimensional center
possessed a foliation tangent to that center direction.
This question was recently answered by \HHU{},
who constructed a partially hyperbolic diffeomorphism on the 3-torus
without such a center foliation \cite{rhrhu2016nondyn}.
Crucial to their construction is a 2-torus embedded in the manifold
tangent to the center-stable direction.
In this paper, we give a classification of all
3-dimensional
partially hyperbolic systems with center-stable tori.

Previous classification results 
relied on the notion of a leaf conjugacy between the center foliations
of two different partially hyperbolic systems \cite{hp20XXsurvey}.
In certain manifolds, such as the 3-torus, the presence of
a center-stable or center-unstable torus is the only potential obstruction
to having an invariant center foliation \cite{pot2015partial}.
In the current setting,
the lack of a center foliation in general
means that it is not possible to use
a global leaf conjugacy to classify the dynamics.
Instead, we first remove all of the center-stable and center-unstable
tori from the system
leaving dynamics defined on an open manifold.
Looking at the dynamics on each of the connected components,
we show that it has the form of a topological skew product.

\medskip{}

Before giving the full result, we state the definitions of partial
hyperbolicity and related concepts.
A diffeomorphism $f$ of a closed connected manifold $M$
is \emph{partially hyperbolic}
if there is a splitting of the tangent bundle
\[
    TM = \Es \oplus \Ec \oplus \Eu
\]
such that each subbundle is non-zero and
invariant under the derivative $Df$
and
\[
    \|Df v^s\| < \|Df v^c\| < \|Df v^u\|
    \qandq
    \|Df v^s\| < 1 < \|Df v^u\|
      \]
for all $x  \in  M$ and unit vectors
$v^s  \in  \Es(x)$, 
$v^c  \in  \Ec(x)$, and
$v^u  \in  \Eu(x)$.
There exist unique foliations $\Ws$ and $\Wu$ tangent to $\Es$ and $\Eu$,
but in general there does not exist a foliation tangent to $\Ec$.
A \emph{center-stable torus} is an embedded torus
tangent to $\Ecs = \Ec \oplus \Es$, and 
a \emph{center-unstable torus} is an embedded torus
tangent to $\Ecu = \Ec \oplus \Eu$.
We also refer to these objects as $cs$ and $cu$-tori.

The definition of partial hyperbolicity above is sometimes
called \emph{pointwise} partial hyperbolicity,
in comparison to a stricter condition called
\emph{absolute} partial hyperbolicity.
In dimensional three, it is not possible for
an absolutely partially hyperbolic system to have
a $cs$ or $cu$-torus \cite{BBI2, HP2}.
Therefore, this paper only uses the pointwise definition of partial
hyperbolicity.

\medskip{}

\begin{thm} \label{thm:main}
    Suppose $f$ is a partially hyperbolic diffeomorphism of a closed, oriented
    3-manifold $M$ which has at least one center-stable or center-unstable
    torus.
    Then,
    \begin{enumerate}
        \item there is a finite and pairwise disjoint collection
        $\{ T_1, T_2, \ldots, T_n\}$
        of all center-stable and center-unstable tori,

        \item every connected component $U_i$ of
        $M \sans (T_1 \cup \ldots \cup T_n)$
        is homeomorphic to $\bbT^2 \ti (0,1)$,

        \item there is $k  \ge  1$ such that
        $f^k$ maps each $T_i$ to itself and each $U_i$ to itself,
        and

        \item for each $U_i \subof M$,
        there is an embedding $h:U_i \to \bbT^2 \ti \bbR$
        such that the homeomorphism
        $h \circ f^k \circ h \inv$ from $h(U_i)$ to itself
        is of the form
        \[            h f^k h \inv(v, s) = (A(v), \phi(v,s) )  \]
        where A : $\bbT^2  \to  \bbT^2$ is a hyperbolic toral automorphism
        and $\phi:h(U_i) \to \bbR$ is continuous.
        Moreover,
        if $v  \in  \bbT^2$, then 
        \begin{enumerate}
            \item $h \inv(v \ti \bbR)$
            is a curve tangent to $\Ec_f$,

            \item $h \inv(\Ws_A(v) \ti \bbR)$
            is a surface tangent to $\Ecs_f$, and

            \item $h \inv(\Wu_A(v) \ti \bbR)$
            is a surface tangent to $\Ecu_f$.
        \end{enumerate}  \end{enumerate}  \end{thm}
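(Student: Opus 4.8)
The backbone of the argument is the structure theory of $cs$ and $cu$-tori. First I would establish (1): a $cs$-torus is incompressible (its fundamental group injects), since any null-homotopic simple closed curve on it would bound a disk, and using the stable foliation inside the torus together with Novikov-type arguments one rules this out; alternatively one invokes the known fact (from \cite{HP2, rhrhu2016nondyn}) that $cs$-tori are incompressible and hence, by the theory of incompressible tori in $3$-manifolds, the manifold $M$ admits only finitely many pairwise disjoint, pairwise non-isotopic ones, and that two distinct $cs$-tori are either disjoint or coincide (tangency to a common bundle prevents transverse intersection — along an intersection curve the tangent planes would have to agree, forcing the tori to coincide by a connectedness/holonomy argument). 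A symmetric statement holds for $cu$-tori, and a $cs$-torus cannot equal a $cu$-torus since $\Ecs \ne \Ecu$. Compactness of $M$ then gives the finite disjoint collection $\{T_1,\dots,T_n\}$.

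For (2), cutting $M$ along the $T_i$ yields a compact $3$-manifold with toral boundary carrying the restricted partially hyperbolic structure; each component $U_i$ is atoroidal relative to its boundary (any interior incompressible torus would be another $cs$ or $cu$-torus, already removed) and has boundary tori, and one shows it is a twisted $I$-bundle or product — here the key input is that the center-stable and center-unstable foliations inside $U_i$, restricted from $f$, have no compact leaves and force $U_i$ to be a $\bbT^2$-bundle over an interval, i.e.\ $\bbT^2 \ti (0,1)$, rather than a more complicated Seifert or hyperbolic piece. This is the place I expect to lean hardest on $3$-manifold topology (JSJ/torus decomposition) combined with the dynamical constraint that $f$ permutes the pieces. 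For (3), since $f$ permutes the finitely many tori $T_i$ and hence the finitely many components $U_i$, some power $f^k$ fixes each; $k$ is just the order of this permutation.

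The heart of the theorem is (4). Fix a component $U_i$, identify it with $\bbT^2 \ti (0,1)$, and pass to the universal cover $\bbR^2 \ti (0,1)$, with $g = f^k$ lifted to $\tilde g$. The stable and unstable foliations of $f$ restrict to foliations of $U_i$; using that each leaf of $\Ws$ (resp.\ $\Wu$) is a properly embedded line or plane that cannot cross the removed boundary tori, one shows $\Wu$ and $\Ws$ subfoliate the $cs$- and $cu$-directions and that their leaf spaces behave like those of a linear Anosov diffeomorphism of $\bbT^2$. Concretely, I would build the semiconjugacy $h$ by collapsing the center curves: show first that $\Ec$ integrates uniquely inside $U_i$ to a one-dimensional foliation $\Fc$ whose leaves are properly embedded copies of $\bbR$ running from one end of $U_i$ to the other (this uses that there is no center leaf that is a circle — such a circle would, together with the stable direction, produce a compact $cs$-leaf, i.e.\ a torus, contradicting that we removed them all, or be trapped; properness comes from the product structure of $U_i$). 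Quotienting $U_i$ by $\Fc$ produces a surface $\Sigma$; the transverse foliations induced by $\Ws$ and $\Wu$ descend to a pair of transverse one-dimensional foliations on $\Sigma$, and $g$ induces a homeomorphism of $\Sigma$ expanding one and contracting the other — this makes $\Sigma$ a torus and the induced map Anosov, hence topologically conjugate to a hyperbolic automorphism $A$ by classical results (Franks–Newhouse type arguments, or directly Ghys' theorem on foliations with these properties). Pulling this conjugacy back and choosing a continuous section along each center leaf gives the embedding $h: U_i \to \bbT^2 \ti \bbR$ with $h g h\inv(v,s) = (A(v), \phi(v,s))$; properties (4a)--(4c) are then immediate from the construction, since $v \ti \bbR$ pulls back to a center leaf and $\Ws_A(v), \Wu_A(v)$ pull back to the stable/unstable leaves of $A$ thickened along the center, which are exactly the $\Ecs$- and $\Ecu$-surfaces.

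The main obstacle I anticipate is the unique integrability of $\Ec$ inside each $U_i$ and the properness of its leaves: partial hyperbolicity does not in general give an integrable center, and indeed the whole point of \cite{rhrhu2016nondyn} is that the center need not integrate globally. The theorem must therefore exploit in an essential way that, once the $cs$- and $cu$-tori are removed, the obstruction disappears — presumably because inside $U_i \cong \bbT^2 \ti (0,1)$ the center-stable and center-unstable foliations are by planes (no compact leaves survive), and a Reeb-type / holonomy argument à la the dynamical coherence results forces branching of the center to occur only along compact center-stable leaves, which are now absent. Making this rigorous, and simultaneously controlling the coarse (large-scale) geometry of the leaves in the universal cover so that the quotient surface is genuinely a torus with an Anosov return map, is where the real work lies.
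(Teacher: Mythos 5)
Your outline gets the right destination (a skew product over a hyperbolic automorphism $A$, with center curves as the fibers collapsed by a Franks-type semiconjugacy), but the route you propose has two genuine gaps, and the second one is the heart of the matter. First, your argument that two $cs$-tori cannot intersect transversally --- ``along an intersection curve the tangent planes would have to agree, forcing the tori to coincide by a connectedness/holonomy argument'' --- does not work: both tori are tangent to the \emph{same} plane field $\Ecs$, which is not integrable, so nothing prevents them a priori from meeting tangentially along a curve without coinciding. This disjointness (\cref{prop:csdisjoint}) is a nontrivial theorem requiring branching foliations, not a local tangency argument; only the $cs$-versus-$cu$ case is easy (a transverse intersection would be a center circle).

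The larger gap is in item (4). You propose to first prove that $\Ec$ is uniquely integrable inside $U_i$ with properly embedded leaves, quotient by this center foliation to get a surface $\Sigma$, and then invoke Ghys/Franks--Newhouse to recognize an Anosov torus map. You correctly flag unique integrability as the main obstacle, but the proof never resolves it and in fact does not need to: the argument instead takes the Brin--Burago--Ivanov \emph{branching} foliations tangent to $\Ecs$ and $\Ecu$, lifts to $\Om\cong\bbR^2\times I$, takes the Franks semiconjugacy $H$ from the lifted $f$ to $A$, and proves directly that the leaves of $\Fcs$ (restricted to the interior) are exactly the level sets of $\Hu=\piu\circ H$ (\cref{prop:csH}), and dually for $\Fcu$. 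This identification --- which requires a separate ``hidden torus'' argument when both boundary components are $cs$-tori, producing a contradiction with the removal of all tori --- is what forces the fibers $H\inv(v)$ to be compact center segments; the partition into center curves is then a \emph{consequence}, not a hypothesis. Your quotient-surface step also presupposes that the leaf space of the center curves is Hausdorff and a torus, which is exactly what is at stake: near a $cs$-torus containing a sink the center curves terminate discontinuously (the ``ragged'' boundary), so the foliation-quotient picture is delicate precisely where you need it. Finally, ``choosing a continuous section along each center leaf'' to produce $h$ hides real work: the third coordinate of $h$ is built by averaging a height function over center arcs of a fixed length exceeding the uniform bound on fiber length, which is what makes it simultaneously continuous on $U_i$ and strictly monotone along every fiber.
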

The first three items of the theorem state previously known results.
The proof of item (1) is given in \cite{ham20XXprox} and
item (2) is a restatement of the work of
Rodriguez Hertz, Rodriguez Hertz, and Ures
to classify which 3-manifolds allow tori with hyperbolic dynamics
\cite{rhrhu2011tori}.
Item (3) follows as a consequence of items (1) and (2).
Therefore, the substance and novelty of \cref{thm:main} lies in item (4).

The theorem shows that there is a form of conjugacy between the
dynamics on a region $U_i$, and a skew product over an Anosov map on $\bbT^2$.
However, this is complicated by the fact that
the dynamics on the
$cs$ and $cu$-tori may not be Anosov in general
and could contain sinks or sources \cite{ham20XXconstructing}.
In such cases, the boundary components of $h(U_i)$ will not be the graphs of
continuous functions from $\bbT^2$ to $\bbR$.
The map $h$ might be thought of as a ``ragged leaf conjugacy''
between $f$ and $A \ti \id$
since $h$ maps the smooth subset $U_i$ of $M$ to a subset of $\bbT^2 \ti \bbR$
with a ragged boundary.
See figure \ref{fig:ragged}.
\begin{figure}
   \includegraphics{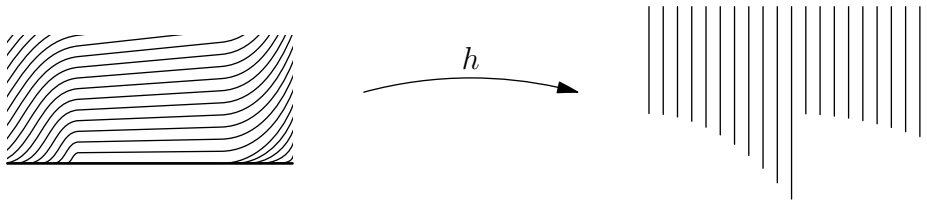} 
   \caption[Ragged]{
   The ``ragged leaf conjugacy'' given by \cref{thm:main}.
   The left side of the figure depicts curves tangent to the center
   direction $\Ec$ in a region $U_i$ as they approach a $cs$-torus
   which has a sink.
   This $cs$-torus is
   depicted by a thick line at the bottom of the left side.
   Each center curve is mapped by $h$ to a vertical segment in $\bbT^2 \ti \bbR$
   and these segments are shown at right.
   As a consequence of results in \cref{sec:fibers},
   the center curves must ``skip over'' the basin of the sink
   as they approach the $cs$-torus, and
   for a point $x  \in  U_i$, the distance along the center curve
   from $x$ to the torus is discontinuous in $x$.
   As a result, the lengths of the vertical segments are also
   discontinuous.
   }
   \label{fig:ragged}
\end{figure}
Nevertheless, the center direction $\Ec$ may still be accurately described in a
neighbourhood of a $cs$ or $cu$-torus.  See \cref{sec:fibers} for details.

Also note that the curves and surfaces given in item (4)
are incomplete with respect to the Riemannian metric induced from $M$.

At a coarse level,
the main steps of proving \cref{thm:main} are similar those
of previous
classification results \cite{ham2013leaf,ham-nil,HP1}.
(See also the recent survey \cite{hp20XXsurvey}.)
By the work of Brin, Burago, and Ivanov, there are branching foliations
tangent to $\Ecs$ and $\Ecu$ on $M$ \cite{BI,BBI2}.
We restrict these branching foliations to one of the components $U_i$
and consider the center curves given by intersecting leaves of the
two branching foliations.
By analyzing the interaction of the dynamics with the
branching foliations,
we show that these center curves correspond to fibers of the semiconjugacy
given by Franks \cite{Franks1}.
Then, using an averaging method along center leaves, we construct the
function $h$.

Two major complications to applying these steps in the current context
are that $U_i$ is not compact, and that the
leaves of the branching foliations have tangencies with the boundary of
$U_i$.  To handle these complications,
we consider the dynamics and the branching foliations
both on the closure of $U_i$ and on compact subsets in the interior of $U_i$.
We also lift these compact subsets to the universal cover and mainly do
analysis there.

To begin, \cref{sec:dimtwo} gives a detailed description of the 2-dimensional
dynamics possible on a $cs$ or $cu$-torus.
\Cref{sec:bran} introduces branching foliations and
states a number of properties which hold for all partially hyperbolic
systems in dimension three.
\Cref{sec:defOm} states properties specific to systems containing a $cs$ or
$cu$-torus and introduces a number of important propositions
which are then proved in
sections \ref{sec:csfiber}, \ref{sec:hiddentorus}, and \ref{sec:fibers}.
\Cref{sec:leafconj} then uses these to prove \cref{thm:main}.
As part of the overall proof, we need a result on the structure
of branching foliations on $\bbT^2 \ti [0,1]$
and this is given in an appendix.

\section{Dynamics in dimension two} \label{sec:dimtwo} %

In order to understand 3-dimensional systems with $cs$ and $cu$-tori,
it is necessary to fully understand the 2-dimensional dynamics acting
on these tori.
In this section, assume $g:\bbT^2 \to \bbT^2$ is a weakly partially hyperbolic
diffeomorphism with a splitting of the form $T \bbT^2 = \Ec \oplus \Es$.
To be precise,
each of the one-dimensional subbundles $\Ec$ and $\Es$
is invariant under the derivative $Dg$ and
\[
    \|Dg v^s\| < \|Dg v^c\|
    \qandq
    \|Dg v^s\| < 1
      \]
hold for all $x  \in  \bbT^2$ and unit vectors
$v^s  \in  \Es(x)$ and 
$v^c  \in  \Ec(x)$.

Lift $g$ to a map on the universal cover.
By a slight abuse of notation,
we also denote the lifted map $\bbR^2  \to  \bbR^2$ by the letter $g$.
For the remainder of the section, we only
consider the lifted dynamics on $\bbR^2$.
This type of dynamical system is analyzed in detail in
\cite[Section 4.A]{potrie2012thesis}
and proofs of the next four propositions may be found there.

\begin{prop} \label{prop:hyptwo}
    There is a hyperbolic linear map $A:\bbR^2 \to \bbR^2$
    at finite distance from $g$.
\end{prop}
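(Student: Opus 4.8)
The plan is to produce a candidate linear map from the action of $g$ on homology and then verify both that it is hyperbolic and that it stays a bounded distance from $g$ on $\bbR^2$. First I would pass to the induced automorphism $A \in \mathrm{GL}(2,\bbZ)$ on $H_1(\bbT^2) = \bbZ^2$; since $g$ lifts to $\bbR^2$ with $g(x+v) = g(x) + Av$ for all $v \in \bbZ^2$, the difference $g - A$ is $\bbZ^2$-periodic, hence bounded on $\bbR^2$, so $A$ is automatically at finite distance from $g$ regardless of its eigenvalues. Thus the entire content of the proposition is that $A$ is \emph{hyperbolic}, i.e. has no eigenvalue on the unit circle; equivalently, since $A$ is integer and $2\times 2$, that $|\operatorname{tr} A| \ge 3$ (ruling out the finite-order and parabolic cases $|\operatorname{tr} A| \le 2$).

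To rule out these cases I would exploit the stable bundle $\Es$. The subbundle $\Es$ on $\bbR^2$ integrates to a $g$-invariant foliation $\Fs$ by curves along which $g$ is a uniform contraction (iterating the inequality $\|Dg\, v^s\| < 1$, which on the compact torus gives a uniform rate). The key step is to extract an asymptotic direction for $\Es$: show that leaves of $\Fs$, being quasi-isometrically embedded lines (this is where one uses that $g-A$ is bounded together with the contraction), have a well-defined direction at infinity, and that this direction is preserved by $g$ and hence is an eigendirection of $A$ with real eigenvalue of modulus $<1$. Having one real eigenvalue of modulus strictly less than one already forces the other eigenvalue (the product being $\det A = \pm 1$) to have modulus strictly greater than one if $\det A = 1$, or to be real with the two eigenvalues multiplying to $-1$; either way both eigenvalues are off the unit circle, so $A$ is hyperbolic. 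The parabolic case $|\operatorname{tr} A| = 2$ is excluded because it would force both eigenvalues equal to $\pm1$, contradicting the existence of a contracted direction; the elliptic cases $|\operatorname{tr} A| \le 1$ are excluded for the same reason.

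I expect the main obstacle to be the extraction of the asymptotic direction of $\Fs$ and the proof that the contraction along $\Fs$ descends to genuine eigenvalue information for $A$: one must control how the $\Es$-foliation sits relative to the linear foliation by lines in the (putative) eigendirections of $A$, and rule out the possibility that $\Fs$ spirals or that its leaves, while contracted, fail to be quasi-geodesic. This is precisely the kind of analysis carried out in \cite[Section 4.A]{potrie2012thesis}; the argument there presumably proceeds by showing that a uniformly contracted invariant foliation on $\bbR^2$ with bounded geometry must have leaves uniformly close to a parallel family of lines, whose common direction is then an eigenvector of $A$, after which the hyperbolicity is immediate from the $2 \times 2$ integer-matrix trichotomy. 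An alternative route that avoids the geometry of $\Fs$ entirely would be to use an exponential volume-growth or word-length estimate: the uniform contraction along $\Es$ together with boundedness of $g - A$ forces orbits to spread at an exponential rate incompatible with $A$ being of finite order or parabolic, but the foliated argument is cleaner and is the one I would follow.
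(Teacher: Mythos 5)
The paper offers no proof of this proposition at all---it is quoted, together with the three propositions that follow it, from Section 4.A of Potrie's thesis---so there is no in-text argument to compare against. Your first paragraph is correct and is where any proof must start: $A$ is the linearization coming from the relation $g(x+z)=g(x)+Az$ for $z\in\bbZ^2$, the difference $g-A$ is $\bbZ^2$-periodic and hence bounded, and the entire content is the hyperbolicity of $A$. (A minor slip: the criterion $|\operatorname{tr} A|\ge 3$ characterizes hyperbolicity only when $\det A=1$; if $\det A=-1$ the eigenvalues are real with product $-1$ and the condition is simply $\operatorname{tr} A\ne 0$.)

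The route you propose to follow, however, has a genuine gap at precisely the step you flag. You assert that the lifted stable leaves are quasi-isometrically embedded, ``using that $g-A$ is bounded together with the contraction,'' and you genuinely need this: deducing $|\lambda|<1$ for the eigenvalue in the asymptotic direction requires comparing the Euclidean displacement of two leaf points with their leafwise distance, which is exactly quasi-geodesy. But quasi-geodesy of the stable foliation is not a formal consequence of leafwise contraction plus boundedness of $g-A$; in the literature it is usually derived \emph{after} one knows $A$ is hyperbolic (via the Franks semiconjugacy), so as written the argument is close to circular. What comes for free is weaker: the stable foliation has no closed leaves (a closed leaf would be uniformly contracted), hence no Reeb components, so lifted leaves are properly embedded and Hausdorff-close to parallel lines; but a leaf may still have arc length wildly larger than displacement inside that bounded strip, and then leafwise contraction gives no control on $\|g^n x-g^n y\|$. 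The clean repair is the ``alternative route'' you mention only in passing, which is the standard argument and the one you should actually carry out: if $A$ is not hyperbolic then $\|A^{\pm n}\|$ grows at most linearly, hence $\sup_x\|g^{-n}(x)-A^{-n}(x)\|$ grows at most polynomially, so the image $g^{-n}(J)$ of a unit stable segment $J$ stays in a ball of polynomially growing radius while its length grows exponentially; since the area of the $1$-neighbourhood of a stable segment is bounded below by a constant times its length (the two-dimensional analogue of \cref{prop:volume}, proved using that a leaf of a foliation of the plane meets a transversal at most once), exponential beats polynomial and one gets a contradiction. This version needs neither an asymptotic direction nor quasi-geodesy.
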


\begin{prop}
    There is a unique $\bbZ^2$-invariant, $g$-invariant foliation $\Wc_g$
    tangent to $\Ec_g$.
\end{prop}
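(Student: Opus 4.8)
The plan is to work entirely in the universal cover $\bbR^2$ and to prove the stronger statement that $\Ec_g$ is \emph{uniquely integrable}; the asserted $g$-invariance and $\bbZ^2$-invariance of $\Wc_g$, and its uniqueness among all such foliations, will then follow formally. The first step is to record the stable foliation together with a product structure. Since $\Es_g$ is uniformly contracted by $Dg$, the stable manifold theorem produces a unique $g$- and $\bbZ^2$-invariant foliation $\Ws_g$ tangent to $\Es_g$, with complete $C^1$ leaves. Using \cref{prop:hyptwo} --- so that $g$ lies at finite distance from a hyperbolic linear map $A$, with stable and unstable lines $E^s_A$ and $E^u_A$ --- I would show that each leaf of $\Ws_g$ stays at bounded Hausdorff distance from a translate of $E^s_A$; consequently the leaf space of $\Ws_g$ is homeomorphic to $\bbR$ and $\Ws_g$ is topologically conjugate to the foliation of $\bbR^2$ by lines parallel to $E^s_A$. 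In particular $\Ws_g$ carries a global product structure.

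Next I would produce curves tangent to $\Ec_g$ and control their shape. Since $\Ec_g$ and $\Es_g$ are transverse continuous line fields over the compact torus, they are uniformly transverse, so on $\bbR^2$ the line field $\Ec_g$ is everywhere transverse to $\Ws_g$ and is spanned by a continuous unit vector field; the Peano existence theorem then yields an integral curve through every point. Because such a curve is transverse to $\Ws_g$, it is strictly monotone across the leaf space, hence injective, and after passing to a maximal integral curve it is a properly embedded line meeting each leaf of $\Ws_g$ exactly once. The one remaining point --- and the decisive one --- is that the integral curve through a given point is \emph{unique}; this is where I expect essentially all of the work to lie, since $\Ec_g$ is only H\"older and continuous line fields need not be uniquely integrable. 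My plan here is a renormalization argument. If two curves tangent to $\Ec_g$ branch at a point $x$, then (together with a stable arc) they bound a region $R$ which, by the product structure, is foliated by compact segments of $\Ws_g$ joining the two curves; the forward iterate $g^n(R)$ is again such a region, based at $g^n(x)$, whose foliating stable segments have been contracted in length at the uniform exponential rate coming from the contraction along $\Es_g$. Combining this with the recurrence of $g^n(x)$ in $\bbT^2$, the domination of $\Es_g$ by $\Ec_g$ (to keep the two bounding center curves under control), and the product structure of $\Ws_g$, I would conclude that $R$ must be degenerate, i.e.\ the two curves coincide. The technically involved part is to isolate the right notion of ``transverse size'' of the fan of center curves through a point and to show this quantity is forced to vanish; I expect this to be the main obstacle.

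Finally, granting unique integrability, let $\Wc_g$ be the unique foliation of $\bbR^2$ tangent to $\Ec_g$. For the lifted map $g$ and any deck translation $\gamma \in \bbZ^2$, the images $g(\Wc_g)$ and $\gamma(\Wc_g)$ are foliations tangent to $Dg(\Ec_g) = \Ec_g$ and $D\gamma(\Ec_g) = \Ec_g$ respectively, so by uniqueness each equals $\Wc_g$. This yields simultaneously the stated $g$- and $\bbZ^2$-invariance and the uniqueness of $\Wc_g$ among all invariant foliations tangent to $\Ec_g$, completing the proof. (If one preferred to avoid full unique integrability, the same renormalization applied to two leaves of two invariant foliations through a common point would give the uniqueness clause directly, but the route above seems cleaner and more useful afterwards.)
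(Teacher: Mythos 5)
Your overall architecture (stable foliation with global product structure, existence of integral curves by Peano, then a uniqueness argument, then invariance by uniqueness) is reasonable, but the decisive step --- unique integrability of $\Ec_g$ --- is exactly the part you leave as an expectation, and as sketched it does not work. The region $R$ bounded by two branching center curves and a stable arc is indeed carried by $g^n$ to a region whose transverse stable segments contract exponentially, but in the pointwise setting nothing prevents $\|Dg|_{\Ec}\|<1$ as well, so the entire region $R$ may simply shrink to a point under forward iteration; that is not by itself a contradiction. What must be exploited is the dominated \emph{ratio} between the stable and center rates, and the known proof (Pujals--Sambarino, cited in the paper as the source of \cref{prop:uniqueinttwo}) is set up quite differently: it works with backward iterates and local stable manifolds $W^s_K$, and is delicate enough that the paper devotes a remark to correcting a typo in it. Your appeal to ``the recurrence of $g^n(x)$'' is also not free: an arbitrary branch point $x$ need not be recurrent, so one has to first show that branching at one point forces branching at a uniform scale over a compact invariant set, or argue with non-wandering points. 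As written, the core of the proof is missing and the proposed mechanism for it is not sound.

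It is also worth noting that you are routing the statement through a strictly stronger result than necessary. The paper cites Section 4.A of Potrie's thesis for the existence and uniqueness of the invariant foliation $\Wc_g$, and cites unique integrability of $\Ec_g$ \emph{separately and later} (\cref{prop:uniqueinttwo}); the invariant foliation can be produced without first establishing unique integrability (e.g.\ via branching foliations, the semiconjugacy, and \cref{prop:ctou}-type arguments), with uniqueness then coming from invariance rather than from uniqueness of integral curves. A smaller point: your claim that a maximal integral curve of $\Ec_g$ meets \emph{every} stable leaf (not just at most one point of each) is essentially \cref{prop:twogps} and also requires an argument; monotonicity across the leaf space only gives injectivity.
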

Let $H:\bbR^2 \to \bbR^2$ be the Franks semiconjugacy \cite{Franks1}.
That is, $H$ is the unique continuous surjective map such that
$A H(x) = H g(x)$ and $H(x+z) = H(x) + z$
for all $x \in \bbR^2$ and $z \in \bbZ^2$.
This implies that
$H$ is a finite distance from the identity map.

\begin{prop} \label{prop:ctou}
    For $x,y \in \bbR^2$,
    $y \in \Wc_g(x)$ if and only if $H(y) \in \Wu_A(H(x))$.
\end{prop}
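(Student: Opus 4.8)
The plan is to prove the two implications separately, using three facts about the lifted foliations that are part of the analysis of two‑dimensional dynamics in \cite[Section 4.A]{potrie2012thesis}: (a) $H$ lies a bounded distance $C$ from the identity (already noted above); (b) the leaves of $\Ws_g$ and of $\Wc_g$ are uniformly quasi‑isometrically embedded in $\bbR^2$, so that every leaf of $\Ws_g$ stays within a uniform Hausdorff distance of a coset of the stable eigenspace $E^s_A$ of $A$ and --- the point that really matters here --- every leaf of $\Wc_g$ stays within a uniform Hausdorff distance $R$ of a coset of the unstable eigenspace $E^u_A$; and (c) $\Ws_g$ and $\Wc_g$ have a global product structure in $\bbR^2$, i.e.\ each center leaf meets each stable leaf in exactly one point. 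Write $|\lambda^s| < 1 < |\lambda^u|$ for the eigenvalues of $A$, let $\pi^s,\pi^u\colon\bbR^2\to E^s_A,E^u_A$ be the projections along the complementary eigenspaces (so $\pi^s+\pi^u=\id$ and both commute with $A$), and note that $\Ws_A(p)=p+E^s_A$ and $\Wu_A(p)=p+E^u_A$.

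I would begin with two observations about the stable foliation. First, $H$ maps each leaf $\Ws_g(x)$ into $\Ws_A(H(x))$: if $y\in\Ws_g(x)$ then $\dist(g^nx,g^ny)\to0$, so $A^n(H(y)-H(x))=H(g^ny)-H(g^nx)$ stays bounded as $n\to+\infty$ by (a); since $\|A^nv\|\ge|\lambda^u|^n\|\pi^uv\|$, this forces $H(y)-H(x)\in E^s_A$. Second, $H$ is injective on each leaf $\Ws_g(x)$: if $H(z_1)=H(z_2)$ with $z_1,z_2\in\Ws_g(x)$ then, applying $A^n$, $\dist(g^nz_1,g^nz_2)\le2C$ for all $n\in\bbZ$, which as $n\to-\infty$ contradicts the uniform expansion of $\Ws_g$ under $g\inv$ together with the quasi‑isometry of its leaves in (b), unless $z_1=z_2$.

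For the forward implication, suppose $y\in\Wc_g(x)$ but $\pi^s(H(y)-H(x))\ne0$. For every $n\ge0$ the points $g\invn x$ and $g\invn y$ lie on the single center leaf $\Wc_g(g\invn x)$, which by (b) is within Hausdorff distance $R$ of a coset of $E^u_A=\ker\pi^s$; hence $\pi^s(g\invn y)-\pi^s(g\invn x)$, and therefore (using (a)) also $\pi^s\bigl(H(g\invn y)-H(g\invn x)\bigr)$, is bounded independently of $n$. But $\pi^s\bigl(H(g\invn y)-H(g\invn x)\bigr)=A\invn\,\pi^s(H(y)-H(x))$ has norm $|\lambda^s|^{-n}\,\|\pi^s(H(y)-H(x))\|\to\infty$, a contradiction; hence $H(y)-H(x)\in E^u_A$, i.e.\ $H(y)\in\Wu_A(H(x))$. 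For the converse, suppose $y\notin\Wc_g(x)$ and let $z$ be the unique point of $\Ws_g(x)\cap\Wc_g(y)$ given by (c). Applying the forward implication to the pair $y,z$ gives $\pi^s(H(z))=\pi^s(H(y))$, while the first observation gives $\pi^u(H(z))=\pi^u(H(x))$. If we also had $H(y)\in\Wu_A(H(x))$, i.e.\ $\pi^s(H(y))=\pi^s(H(x))$, then $H(z)=\pi^s(H(x))+\pi^u(H(x))=H(x)$, hence $z=x$ by the second observation, hence $x\in\Wc_g(y)$ and $y\in\Wc_g(x)$, a contradiction; so $H(y)\notin\Wu_A(H(x))$.

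The step I expect to be the genuine obstacle is fact (b) --- controlling the large‑scale geometry of the center foliation, and in particular knowing that its leaves shadow cosets of $E^u_A$ rather than of $E^s_A$. This requires showing that $\Wc_g$ behaves at large scales like a linear foliation: ruling out closed center leaves and exceptional or Reeb‑type behaviour, establishing the quasi‑isometry of its leaves, and then pinning the asymptotic direction down as one of the two $A$‑invariant directions (using $g$‑invariance) and identifying it as $E^u_A$ via transversality to $\Ws_g$, whose leaves already shadow $E^s_A$ by the first observation. This is the substantive content of \cite[Section 4.A]{potrie2012thesis}; granting it, the two implications above follow from the contraction and expansion rates of $A$ carried through the semiconjugacy $H$.
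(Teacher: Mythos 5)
The paper does not actually prove this proposition; it is one of the four statements deferred wholesale to \cite[Section 4.A]{potrie2012thesis}, so there is no in-paper argument to compare against. Your derivation is the standard one from that source and is logically sound: granting (a) that $H$ is at bounded distance from the identity, (b) that the lifted stable and center leaves are uniformly quasi-isometric and shadow cosets of $E^s_A$ and $E^u_A$ respectively within a uniform Hausdorff distance, and (c) the global product structure of \cref{prop:twogps}, your forward implication (backward iteration turns the uniform bound on $\pi^s$ along a center leaf into an exact equality via the expansion of $A\inv$ on $E^s_A$) and your converse (intersect $\Ws_g(x)$ with $\Wc_g(y)$ and use injectivity of $H$ on stable leaves) are both correct. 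You are also right, and commendably explicit, that essentially all of the difficulty is concentrated in fact (b); your ``proof'' is really a correct reduction of the proposition to (b), which is exactly the content established in the cited thesis. One small imprecision: transversality of $\Wc_g$ and $\Ws_g$ alone does not force their asymptotic directions to differ; what rules out the center direction being $E^s_A$ is the global product structure (two foliations whose leaves shadow parallel lines of the same irrational slope could not have every center leaf meeting every stable leaf), so (c) is needed already at that stage, not only in the converse implication.
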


\begin{prop} \label{prop:twogps}
    For $x,y \in \bbR^2$,
    the curves $\Wc_g(x)$ and $\Ws_g(y)$ intersect in exactly one point.
\end{prop}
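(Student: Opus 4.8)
The plan is to use the Franks semiconjugacy $H$ together with \cref{prop:ctou} to recognize $\Wc_g$ as a ``standard'' foliation, and then to play it off against the honest, transverse, stable foliation. Since $g$ is weakly partially hyperbolic on the compact torus, $\Es$ is uniformly contracted, so $\Ws_g$ is a genuine $C^1$ foliation; its lift and the lift of $\Wc_g$ are transverse foliations of $\bbR^2$ whose leaves are topological lines (a one-dimensional foliation of the plane cannot have a circle leaf, by Poincar\'e--Hopf). Let $q\colon \bbR^2 \to \bbR$ be the quotient onto the leaf space of the unstable foliation of $A$ (a copy of $\bbR$, since the $A$-unstable leaves form a pencil of parallel lines), and put $\beta = q \circ H$. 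By \cref{prop:ctou}, $\Wc_g(x) = H\inv(\Wu_A(H(x)))$ equals the fiber $\beta\inv(\beta(x))$; thus $\Wc_g$ is precisely the foliation of $\bbR^2$ by fibers of the continuous surjection $\beta$. In particular the leaf space of $\Wc_g$ injects continuously into $\bbR$, so it is a connected Hausdorff $1$-manifold, hence homeomorphic to $\bbR$; by Kaplan's description of foliations of the plane by lines (a Hausdorff leaf space forces the foliation to be homeomorphic to the foliation by parallel lines), there is a homeomorphism of $\bbR^2$ carrying $\Wc_g$ onto the foliation by vertical lines.

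The argument is then driven by two confinement estimates. Because $H$ lies at a bounded distance, say $C$, from the identity, $\Wc_g(x) = H\inv(\Wu_A(H(x)))$ lies in the $C$-neighbourhood of the straight line $\Wu_A(H(x))$, which is parallel to the unstable eigendirection of $A$. On the other hand, if $z$ lies on $\Ws_g(y)$ then $\dist(g^n z, g^n y) \to 0$, so $\dist(A^n H(z), A^n H(y)) = \dist(H g^n z, H g^n y)$ stays bounded as $n \to +\infty$; since $A$ is hyperbolic this forces $H(z) \in \Ws_A(H(y))$, whence $\Ws_g(y)$ lies in the $C$-neighbourhood of the line $\Ws_A(H(y))$, parallel to the stable eigendirection of $A$. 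As these two eigendirections differ, the $C$-neighbourhood of any unstable line of $A$ meets the $C$-neighbourhood of any stable line of $A$ in a bounded set, sitting near their unique common point and depending continuously on the two lines.

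For existence I would fix $y$ and consider the set $E_y$ of center leaves meeting $\Ws_g(y)$, regarded as a subset of the leaf space $\bbR$. It is nonempty ($\Wc_g(y) \ni y$) and, by transversality of the two foliations, open. It is also closed: if center leaves converge in the leaf space, i.e.\ $\beta(x_n) \to \beta(x_\infty)$, and $z_n \in \Wc_g(x_n) \cap \Ws_g(y)$, then the confinement estimates place the $z_n$ eventually in a fixed bounded region, so a subsequence converges to some $z_\ast \in \Ws_g(y)$, and $\beta(z_\ast) = \lim \beta(z_n) = \lim \beta(x_n) = \beta(x_\infty)$ gives $z_\ast \in \Wc_g(x_\infty)$. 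Since the leaf space is connected, $E_y$ is everything: every center leaf meets $\Ws_g(y)$.

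For uniqueness I would pass to the coordinates of the first paragraph, in which $\Wc_g$ is the vertical foliation; since transversality of a pair of foliations is a topological notion preserved by homeomorphisms, $\Ws_g$ remains transverse to the vertical lines there, so along any $\Ws_g$-leaf the first coordinate is strictly monotone, and hence that leaf meets each vertical line --- i.e.\ each center leaf --- in at most one point. With existence, this yields the single intersection point. The step I expect to be the main obstacle is the second paragraph's claim that $H$ sends each stable leaf of $g$ into a single stable leaf of $A$: it is the only place where hyperbolicity of $A$ and the finite-distance property of $H$ must be combined with real care, and it is what makes the confinement bounds --- hence the closedness of $E_y$ --- go through. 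Invoking Kaplan's theorem to straighten $\Wc_g$ is the other essential external input; the rest is soft point-set topology of transverse foliations of the plane.
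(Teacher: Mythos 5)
Your argument is essentially correct, but be aware that the paper offers no proof of \cref{prop:twogps}: it is one of the four propositions deferred wholesale to Section 4.A of Potrie's thesis, so you have written a genuine reconstruction rather than a variant of anything in the text. Your skeleton --- (i) use \cref{prop:ctou} to recognize $\Wc_g$ as the fiber foliation of $\Hs=\pis\circ H$, so its leaf space injects into $\bbR$, is Hausdorff, and the foliation straightens by Kaplan; (ii) confine each center leaf to a bounded-width strip around an unstable line of $A$ and each stable leaf to a strip around a stable line of $A$; (iii) deduce existence from an open--closed argument in the leaf space and uniqueness from strict monotonicity of the straightened coordinate along stable leaves --- is sound, and your second paragraph is word-for-word the paper's later proof of \cref{prop:stos}. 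Two caveats. First, the closedness of your set $E_y$ quietly uses that $\Ws_g(y)$ is a closed subset of $\bbR^2$, so that the limit $z_*$ lies on the leaf rather than merely in its closure; the paper derives proper embeddedness of stable leaves \emph{from} \cref{prop:twogps} (in the proof of \cref{prop:shomeo}), so you must instead invoke the Haefliger--Reeb/Kaplan fact that every leaf of a plane foliation is properly embedded --- a fact you are already citing for the straightening, so this is a presentational rather than mathematical gap. Second, your proof leans on \cref{prop:ctou}, imported from the same external source; this respects the paper's logical ordering, but you should verify that the cited source does not itself obtain \cref{prop:ctou} from \cref{prop:twogps}. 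Finally, note that uniqueness admits a more elementary route that avoids Kaplan entirely and that the paper uses elsewhere (in the proof of \cref{prop:cvolume}): two intersection points of a center leaf with a stable leaf would bound a disk whose boundary is a center arc concatenated with a stable arc, and a Poincar\'e--Bendixson argument for the stable line field inside that disk yields a closed orbit or a singularity, a contradiction.
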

The next result concerns unique integrability of the center and
is proved in \cite{pujals2007integrability}.

\begin{prop} \label{prop:uniqueinttwo}
    Any curve tangent to $\Ec_g$ lies inside a leaf of $\Wc_g$.
\end{prop}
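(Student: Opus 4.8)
The plan is to argue by contradiction and, via a connectedness argument, to reduce the statement to a local one: \emph{through every point of $\bbR^2$ there is a unique germ of $C^1$ curve tangent to $\Ec_g$.} Granting this, if $\gamma\colon[0,1]\to\bbR^2$ is tangent to $\Ec_g$ with $\gamma(0)=x$, then the set of $t$ for which $\gamma([0,t])$ lies in a single leaf of $\Wc_g$ is nonempty, closed (leaves of $\Wc_g$ are properly embedded lines), and open (by the local uniqueness of germs), hence all of $[0,1]$; this is the proposition. So everything reduces to ruling out branching of the line field $\Ec_g$.

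For the local statement I would follow the treatment of dominated splittings on surfaces in \cite{pujals2007integrability} (see also \cite{potrie2012thesis}), using the strong stable foliation $\Ws_g$ as a transversal. Suppose $\sigma_0$ and $\sigma_1$ are $C^1$ arcs tangent to $\Ec_g$ through a point $z$ with $T_z\sigma_0=T_z\sigma_1=\Ec_g(z)$, and suppose they separate; after relabeling we may take $\sigma_0$ to be an arc of the center leaf $L:=\Wc_g(z)$. By \cref{prop:twogps} every stable leaf meets $L$ in exactly one point, so the stable holonomy defines a continuous projection $P\colon\bbR^2\to L$, and the ``stable defect'' $\delta(w)$, the stable arclength from $w$ to $P(w)$, is continuous and vanishes exactly on $L$; in particular $\delta>0$ on $\sigma_1\setminus\{z\}$. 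Because $\|Dg v^s\|<1$ uniformly, $\delta$ is uniformly exponentially contracted along forward orbits.

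The contraction of $\delta$ by itself does not contradict $\delta>0$, and this is where the domination hypothesis $\|Dg v^s\|<\|Dg v^c\|$ becomes indispensable: $\Ec_g$ is neither contracted nor expanded, so one cannot use any growth property of $\Ec_g$ directly, and crude estimates on $\|Dg^{-n}|_{\Ec_g}\|$ are useless, since this can grow much faster than the linear rate. Instead I would forward-iterate a thin region bounded by subarcs of $\sigma_0$, $\sigma_1$ and a short stable segment. Its width in the stable direction tends to zero; meanwhile, by domination, its extent in the center direction remains comparable to that of a genuine center plaque, and by \cref{prop:ctou} the backward iterates of a center plaque have uniformly bounded diameter (their images under the Franks semiconjugacy $H$ lie in unstable segments of $A$, which are contracted by $A\inv$), so the region cannot escape to infinity in the center direction either. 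A graph-transform estimate for this region, whose contraction rate is governed by the domination ratio $\|Dg v^s\|/\|Dg v^c\|<1$, then forces $\delta\equiv 0$ on $\sigma_1$, so that $\sigma_0$ and $\sigma_1$ share a germ at $z$, contradicting the assumed separation.

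The hard part is precisely this local step; the rest is bookkeeping. The essential difficulty is that $\Ec_g$ is only continuous, so a priori nothing forbids branching, and the control needed to pin $C^1$ curves tangent to $\Ec_g$ onto center plaques must be squeezed out of the interaction between the uniform contraction on $\Es_g$ and the domination of $\Es_g$ by $\Ec_g$. This analysis is carried out in \cite{pujals2007integrability}.
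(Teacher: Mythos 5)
The paper itself offers no proof of \cref{prop:uniqueinttwo}: it simply cites \cite{pujals2007integrability}, so in deferring the essential analysis to that reference your proposal matches the paper, and your reduction of the global statement to local uniqueness of germs via a connectedness argument along $\gamma$ is standard and correct.

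That said, the mechanism you sketch for the local step is not the one in the cited argument, and as written it would not close. You iterate \emph{forward}, observe that the stable width of the region collapses, and then invoke a graph-transform estimate to ``force $\delta\equiv 0$''; but any forward-iteration estimate only controls the images $f^n(\sigma_1)$, and (as you yourself note) the collapse of the defect of the images says nothing about the defect at time zero. The argument of \cite{pujals2007integrability} runs in the opposite direction. Domination is used (via a cone/graph-transform estimate for $f\inv$) to show that the \emph{backward} iterates of the integral curve stay inside a stable-saturated neighbourhood of uniformly bounded stable radius of the backward iterates of the center plaque, i.e.\ $f^{-n}(J_2)\subset W^s_K(f^{-n}(J_1))$ for all $n\ge 0$ --- exactly the inclusion whose typo the paper's remark corrects. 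Since $f\inv$ expands stable arclength uniformly, matching points at backward time $n$ and pushing forward gives $\delta\le K\mu^{-n}$ for some $\mu>1$ and all $n$, whence $\delta\equiv 0$. So the uniform contraction of $\Es_g$ produces the contradiction backward in time, while domination is what confines the backward orbit of the integral curve near the plaque; your sketch interchanges these two roles. (The appeal to \cref{prop:ctou} and the Franks semiconjugacy to bound center diameters is also not needed for this purely local statement and plays no role in the cited proof.)
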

\begin{remark}
    The proof given in
    \cite{pujals2007integrability}
    has a small typo which could be a source of confusion to the reader.
    In the equation $f^{-n}(J_1) \subset W^s_K(f^{-n}(J_1))$
    near the end of section 4.2 of that paper,
    the $J_1$ on the left should actually be $J_2$.
\end{remark}
\medskip

We now state and prove several additional results which will be needed 
later in this paper.

\begin{prop} \label{prop:stos}
    For $x,y \in \bbR^2$,
    if $y \in \Ws_g(x)$, then $H(y) \in \Ws_A(H(x))$.
\end{prop}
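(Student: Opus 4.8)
The plan is to use the two structural facts we have about the Franks semiconjugacy — that $AH = Hg$ (hence $H\circ g^n = A^n\circ H$) and that $H$ is at bounded distance from the identity — together with the fact that stable leaves of $g$ are genuinely contracted under forward iteration. In outline: if $y\in\Ws_g(x)$ then $g^n x$ and $g^n y$ stay within bounded (indeed shrinking) distance of each other, so $A^n(H(x)-H(y))$ stays bounded for all $n\ge 0$; hyperbolicity of $A$ then forces $H(x)-H(y)$ to lie in the contracting eigenspace of $A$, which is exactly the conclusion.

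First I would record the contraction property of $\Ws_g$. Since $\bbT^2$ is compact and $\|Dg\,v^s\|<1$ for every unit $v^s\in\Es_g$, there is $\lam<1$ with $\|Dg|_{\Es_g}\|\le\lam$ at every point, and the same bound passes to the lift on $\bbR^2$. If $y\in\Ws_g(x)$, join $x$ to $y$ by an arc contained in the leaf $\Ws_g(x)$; this arc is tangent to $\Es_g$ and has some finite length $\ell$, and its image under $g^n$ is an arc from $g^n x$ to $g^n y$ of length at most $\lam^n\ell$. Hence $\dist(g^n x, g^n y)\le\lam^n\ell\to 0$ as $n\to\infty$.

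Next, let $C$ be such that $\|H(z)-z\|\le C$ for all $z\in\bbR^2$. Then for $n\ge 0$,
\[
  \|A^n\bigl(H(x)-H(y)\bigr)\| = \|H(g^n x)-H(g^n y)\| \le \dist(g^n x, g^n y) + 2C,
\]
which is bounded independently of $n$. Writing $\bbR^2=\Es_A\oplus\Eu_A$ and decomposing $H(x)-H(y)=v^s+v^u$ accordingly, if $v^u\ne 0$ then $\|A^n v^u\|\to\infty$ while $\|A^n v^s\|$ stays bounded, so $\|A^n(H(x)-H(y))\|\to\infty$, a contradiction. Therefore $v^u=0$, i.e. $H(x)-H(y)\in\Es_A$, which is precisely the statement $H(y)\in\Ws_A(H(x))$. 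The argument is short and I do not expect a genuine obstacle; the only point needing care is the first step — making sure $\Ws_g$ is used as the forward-contracting stable foliation, so that points on a common leaf are forward asymptotic — and this is immediate from the uniform contraction on the compact torus.
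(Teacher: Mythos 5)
Your proof is correct and follows the same basic strategy as the paper's: push $x$ and $y$ forward by $g^n$, transfer to $A^n$ through the semiconjugacy relation $H g^n = A^n H$, and conclude using the hyperbolicity of $A$. The two arguments diverge only in the bookkeeping. The paper uses the uniform continuity of $H$ to upgrade $\dist(g^n(x), g^n(y)) \to 0$ directly to $\dist(A^n H(x), A^n H(y)) \to 0$, and then quotes the characterization of $\Ws_A(H(x))$ as the set of points forward-asymptotic to $H(x)$. You instead use the weaker input that $H$ is at bounded distance from the identity, which only yields that $\|A^n(H(x)-H(y))\|$ stays bounded, and you then need the extra (but standard) linear-algebra step of decomposing $H(x)-H(y)$ along $\Es_A \oplus \Eu_A$ to rule out an unstable component. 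Your route is marginally longer but slightly more robust: it would go through knowing only that the forward $g$-orbits of $x$ and $y$ stay a bounded distance apart, rather than converge. Both are complete proofs; your preliminary paragraph establishing uniform contraction of arcs inside $\Ws_g$-leaves is correct and is taken for granted in the paper.
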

\begin{proof}
    As $H$ is uniformly continuous,
    $d(g^n(x), g^n(y)) \to 0$
    implies that 
    \[    
        d(A^n H(x), A^n H(y)) = d(H g^n(x), H g^n(y)) \to 0.
    \]
    This occurs exactly when $H(y) \in \Ws_A(H(x))$.
\end{proof}
Note that \cref{prop:ctou} has an ``if and only if''
condition and \cref{prop:stos} does not.

Let $\piu,\pis: \bbR^2 \to \bbR$ be linear maps such that
$\ker \piu$ is the stable leaf of $A$ which passes through the origin
and $\ker \pis$ is the unstable leaf.
Define $\Hu = \piu \circ H$ and $\Hs = \pis \circ H$.

\begin{prop} \label{prop:shomeo}
    For any stable leaf $L$ of $g$,
    the restriction of $\Hs$ to $L$ is
    a homeomorphism from $L$ to $\bbR$.
\end{prop}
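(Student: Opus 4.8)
The plan is to show that $\Hs$ restricted to a stable leaf $L$ of $g$ is injective, continuous, proper, and has connected image, from which the conclusion follows since $L$ is homeomorphic to $\bbR$.

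First I would establish \emph{injectivity}. Suppose $x, y \in L$ with $\Hs(x) = \Hs(y)$. Since $y \in \Ws_g(x)$, \cref{prop:stos} gives $H(y) \in \Ws_A(H(x))$, so $H(x)$ and $H(y)$ lie on a common stable leaf of $A$, which is a line in the direction $\ker \piu = \ker\Hs$. On the other hand $\Hs(x) = \Hs(y)$ means $H(x)$ and $H(y)$ lie on a common line in the direction $\ker\pis = \ker\Hu$, i.e. on an unstable leaf of $A$. A stable and an unstable leaf of a hyperbolic linear map meet in exactly one point, so $H(x) = H(y)$. Now I would feed this back into the dynamics: applying the semiconjugacy relation $A^n H = H g^n$, we get $H(g^n x) = H(g^n y)$ for all $n \le 0$ as well, and since $y \in \Ws_g(x)$ the backward iterates $g^{-n}(x), g^{-n}(y)$ separate along $L$ (the stable leaf is a line on which $g^{-1}$ expands), so $x \ne y$ would force $H$ to collapse an unbounded stable segment to a point — but $H$ is a finite distance from the identity, so $|\Hs(g^{-n}x) - \Hs(g^{-n}y)|$ cannot stay bounded unless it is eventually $0$; tracking this carefully shows $x = y$. (Alternatively, and more cleanly: $\Hs$ is monotone along $L$ because $H$ maps the oriented line $L$ into $\bbR^2$ at bounded distance from the identity and then $\pis$ projects; combined with the one-point intersection argument this already gives injectivity.)

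Next, \emph{continuity} of $\Hs|_L$ is immediate since $H$ is continuous and $\pis$ is linear. For \emph{properness} (so that the image is all of $\bbR$), I would parametrize $L$ by arc length or, better, by $\Hu|_L$ — note $L$ is a stable leaf of $g$ so it is a properly embedded line, and as we travel to infinity along $L$ its image under $H$ goes to infinity (bounded distance from identity). I want to rule out the image under $\Hs$ being bounded on a ray of $L$. If $\Hs$ were bounded on one end of $L$, then $H$ maps that end into a vertical strip $\{|\pis| \le C\}$; but that end, under forward iteration by $g$, contracts, while its $H$-image stays within bounded distance of the (contracting-to-a-point) $g$-orbit — this is consistent — so the real constraint comes from backward iteration: the backward end of $L$ under $g^{-n}$ stretches, and its $H$-image must stretch correspondingly in the $A$-stable direction $\ker\Hs$, which is exactly orthogonal (in the eigendirection sense) to what $\Hs$ measures, so that is \emph{also} consistent. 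The honest argument is: $\Hs|_L$ is continuous and injective, hence strictly monotone, so its image is an open interval $(a,b)$; if say $b < \infty$, pick points $x_k \in L$ with $\Hs(x_k) \to b$; these escape to infinity in $L$ (by monotonicity and injectivity), so $\|x_k\| \to \infty$ along a single direction, forcing $\|H(x_k)\| \to \infty$; since $\Hs(x_k) \to b$ is bounded, we need $\Hu(x_k) \to \pm\infty$, i.e. $H(x_k)$ escapes along the $A$-stable direction. But $x_k \in L$ where $L$ is a $g$-stable leaf, and applying \cref{prop:stos} to pairs $x_k, x_\ell$ shows $H(x_k)$ all lie on a single $A$-stable leaf — a line in the $\ker\Hs$ direction — which is unbounded in exactly that direction, so there is no contradiction yet, but it pins the image of $L$ to a line on which $\Hs$ is \emph{constant}; that contradicts injectivity unless $L$ is a point. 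Hence no finite endpoint: the image is all of $\bbR$.

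I expect the \textbf{main obstacle} to be the properness/surjectivity step: showing that $\Hs$ does not have bounded image along an end of $L$. The clean route, which I would adopt in the writeup, is to combine two facts from the excerpt: (i) by \cref{prop:stos}, $H$ sends the entire stable leaf $L$ into a single stable leaf $\ell$ of $A$; (ii) the restriction $H|_L : L \to \ell$ is continuous, injective (by the argument above), and at bounded distance from the identity once we identify $L$ and $\ell$ with $\bbR$ via any fixed linear chart in which $\Es_A$ is a coordinate axis — and a continuous injection $\bbR \to \bbR$ at bounded distance from the identity is automatically a surjective homeomorphism. Composing with the linear isomorphism $\pis|_\ell : \ell \to \bbR$ then finishes it. The subtle point requiring care is verifying that $H|_L$ is genuinely at bounded distance from the identity \emph{as a map of lines}, not merely that $\|H - \id\|$ is bounded in $\bbR^2$; this follows because $L$ stays at bounded distance from $\ell$ (both pass through nearby points and $H$ moves points a bounded amount, and $L$ is a leaf whose tangent direction $\Es_g$ stays in a cone transverse to $\Ec_A = \ker\Hs$, by \cref{prop:hyptwo} and the cone-field characterization of the splitting).
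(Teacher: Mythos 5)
There is a genuine gap in the injectivity step. Your first move correctly shows $H(x)=H(y)$: indeed $\Hs(x)=\Hs(y)$ puts $H(x)$ and $H(y)$ on one unstable leaf of $A$, while \cref{prop:stos} puts them on one stable leaf. But the passage from $H(x)=H(y)$ to $x=y$ is the entire content of the claim, and neither of your two proposed routes delivers it. The dynamical route fails as stated: from $H(g^{-n}x)=H(g^{-n}y)$ and $\|H-\id\|\le C$ you get $\|g^{-n}x-g^{-n}y\|\le 2C$ for all $n$, which is not by itself contradictory — the arc length of the stable segment joining $g^{-n}x$ to $g^{-n}y$ blows up, but a long leaf segment can a priori return close to its starting point, so "the backward iterates separate" in the Euclidean sense needs a further geometric input you do not supply. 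The "more cleanly, $\Hs$ is monotone along $L$" alternative is unjustified (bounded distance from the identity does not give monotonicity of $\pis\circ H$ along $L$, and monotone is anyway weaker than injective). The paper closes this in one line using the \emph{biconditional} in \cref{prop:ctou}, which you never invoke here: $\Hs(x)=\Hs(y)$ means $H(y)\in\Wu_A(H(x))$, hence $y\in\Wc_g(x)$ by \cref{prop:ctou}, and since also $y\in\Ws_g(x)$, \cref{prop:twogps} forces $y=x$. This is the missing idea.

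The surjectivity paragraph contains the right ingredients ($L$ properly embedded, $\|x_k\|\to\infty\Rightarrow\|H(x_k)\|\to\infty$, \cref{prop:stos}) but is garbled by a stable/unstable mix-up. Since $H(L)$ lies in a single stable leaf of $A$ and $\piu$ is constant on stable leaves, $\Hu(x_k)$ is \emph{constant} — it cannot tend to $\pm\infty$, and the spurious claim that $H(L)$ is "a line on which $\Hs$ is constant" is false ($\pis$ restricted to a stable leaf of $A$ is a linear bijection onto $\bbR$; it is constant on \emph{unstable} leaves). The correct punchline, which is the paper's, is immediate: $\Hu(x_k)$ constant and $\Hs(x_k)\to b<\infty$ would make $H(x_k)$ bounded, contradicting $\|H(x_k)\|\to\infty$; hence $|\Hs(x_k)|\to\infty$ along each end of $L$ and the image is all of $\bbR$. (Your final "clean route" via a continuous injection of $\bbR$ at bounded distance from the identity is viable, but it still presupposes the injectivity you have not established.)
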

\begin{proof}
    Using propositions \ref{prop:ctou} and \ref{prop:twogps} for $x,y  \in  L$,
    \begin{align*}
        \Hs(y) = \Hs(x)  \quad \Leftrightarrow \quad 
        H(y)  \in  \Wu_A(H(x))  \quad \Leftrightarrow \quad 
        y  \in  \Wc_g(x)  \quad \Leftrightarrow \quad 
        y = x.
    \end{align*}
    This shows that $\Hs|_L$ is injective.
    If $L$ accumulated on a point $y  \in  \bbR^2$,
    then
    $\Wc_g(y)$ would intersect $L$ in multiple points,
    contradicting \cref{prop:twogps}.
    Hence,
    $L$ is properly embedded.
    If $\{x_n\}$ is a sequence in $L$ such that $\|x_n\| \to \infty$,
    then $\|H x_n\| \to \infty$.
    As \cref{prop:stos} implies that $\Hu x_n$ is constant,
    it must be that $|\Hs x_n| \to \infty$.
    From this, one may show that
    $\Hs|_L$ is surjective.
\end{proof}
In general,
the restriction of $\Hu$ to a center leaf will not be a homeomorphism.
However, it is still monotonic,
as we show after first establishing a few lemmas.

\begin{lemma}
    There is $R > 0$ such that for any $x  \in  \bbR^2$,
    the set $\bbR^2 \sans \Ws_g(x)$ has connected components
    $S^-_x$ and $S^+_x$ satisfying
    \[
        \{y  \in  \bbR^2 : \Hu(y) < \Hu(x) - R\} \subof S^-_x
        \qandq
        \{y  \in  \bbR^2 : \Hu(y) > \Hu(x) + R\} \subof S^+_x.
    \]  \end{lemma}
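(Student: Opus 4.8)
The plan is to trap the stable leaf $L := \Ws_g(x)$ inside a strip of uniformly bounded width parallel to the stable direction of $A$, and then invoke a planar separation argument: a properly embedded line confined to such a strip, but with its two ends escaping to infinity in opposite directions along it, must separate the half-planes lying far above and far below the strip. For the setup, recall that $H$ is a finite distance from the identity and that $(\piu,\pis)\colon\bbR^2\to\bbR^2$ is a linear isomorphism, its kernels being the two distinct eigenlines of $A$; hence there is a constant $C$, \emph{independent of $x$}, with $|\Hu(z)-\piu(z)|\le C$ and $|\Hs(z)-\pis(z)|\le C$ for all $z\in\bbR^2$. Put $R:=2C$. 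If $z\in L$ then \cref{prop:stos} gives $H(z)\in\Ws_A(H(x))=H(x)+\ker\piu$, so $\Hu(z)=\Hu(x)=:c$, and therefore $|\piu(z)-c|\le C$. Thus $L$ lies in the closed strip $\Sigma:=\{z:|\piu(z)-c|\le C\}$, whose complement is the disjoint union of the open connected half-planes $\Sigma^+:=\{\piu>c+C\}$ and $\Sigma^-:=\{\piu<c-C\}$; moreover $\{y:\Hu(y)>\Hu(x)+R\}\subset\Sigma^+$ and $\{y:\Hu(y)<\Hu(x)-R\}\subset\Sigma^-$, again by $|\Hu-\piu|\le C$.

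Next I would describe the ends of $L$. By \cref{prop:shomeo}, $L$ is properly embedded and $\Hs|_L$ is a homeomorphism onto $\bbR$; since $\pis$ stays within $C$ of $\Hs$, the coordinate $\pis$ tends to $+\infty$ along one end of $L$ and to $-\infty$ along the other. So $L$ is a properly embedded line sitting inside $\Sigma$ whose two ends leave $\Sigma$ in opposite directions. As $L$ is a properly embedded line, $\bbR^2\setminus L$ has exactly two connected components, and since $\Sigma^+$ and $\Sigma^-$ are connected and disjoint from $L$, each of them lies entirely in one of these components; label the components $S^+_x$ and $S^-_x$ accordingly. The remaining --- and, I expect, the only delicate --- point is to check that $\Sigma^+$ and $\Sigma^-$ lie in \emph{different} components of $\bbR^2\setminus L$.

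For that last step I would argue topologically. One approach passes to the one-point compactification $S^2=\bbR^2\cup\{\infty\}$, where $L\cup\{\infty\}$ is a simple closed curve and hence bounds two open discs; from the description of the ends of $L$, a small neighbourhood of $\infty$ is cut by this curve into an ``upper'' piece meeting $\Sigma^+$ and a ``lower'' piece meeting $\Sigma^-$, and these pieces lie in distinct discs, which gives the claim. An alternative that avoids compactification is to take any arc $\gamma$ joining a point of $\Sigma^+$ to a point of $\Sigma^-$, extract from it a subarc crossing $\Sigma$ from one bounding line to the other, and show that this subarc must meet $L$ because $L$ separates the two bounding lines of $\Sigma$ within $\Sigma$ (a standard fact about proper arcs in a strip). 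Either way the argument uses only planar topology; the hyperbolic dynamics enters solely through \cref{prop:stos} and \cref{prop:shomeo}, and since $C$ is independent of $x$ the constant $R$ is uniform, as required.
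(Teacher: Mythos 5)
Your proof is correct and follows essentially the same route as the paper's: use \cref{prop:stos} together with the fact that $H$ is a finite distance from the identity to trap $\Ws_g(x)$ in a strip of uniformly bounded $\piu$-width, then label the two complementary components accordingly. The only difference is that you explicitly justify the separation step (that the half-planes far above and far below the strip lie in \emph{different} components of $\bbR^2 \sans \Ws_g(x)$, using \cref{prop:shomeo} to control the ends of the leaf), which the paper's proof leaves implicit in the phrase ``may be labelled so that.''
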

\begin{proof}
    By \cref{prop:stos} and the fact that $H$ is a finite distance
    from the identity,
    there is a uniform constant $R_0 > 0$ such that
    $|\piu(p) - \piu(q)| < R_0$
    for any two points $p$, $q$ on the same stable leaf of $g$.
    Hence,
    the components $S^-_x$ and $S^+_x$
    may be labelled so that
    \[
        \{y  \in  \bbR^2 : \piu(y) < \piu(x) - R_0\} \subof S^-_x
        \qandq
        \{y  \in  \bbR^2 : \piu(y) > \piu(x) + R_0\} \subof S^+_x.
    \]
    As $\Hu$ is a finite distance from $\piu$,
    the desired result holds.
\end{proof}
\begin{lemma} \label{lemma:halftohalf}
    If $k > 0$ is even,
    then
    $f^k(S^-_x) = S^-_{f^k(x)}$
    and
    $f^k(S^+_x) = S^+_{f^k(x)}$.
\end{lemma}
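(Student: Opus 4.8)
The plan is to combine the fact that $g^k$ is a homeomorphism permuting the two sides of a stable leaf with the $g$-equivariance of the Franks semiconjugacy. (Here I write $g$ for the lifted dynamics, as in the rest of this section.) Since the lift $g$ is a homeomorphism of $\bbR^2$ and the stable foliation is $g$-invariant, $g^k(\Ws_g(x)) = \Ws_g(g^k(x))$; hence $g^k$ maps $\bbR^2 \setminus \Ws_g(x)$ onto $\bbR^2 \setminus \Ws_g(g^k(x))$ and, being a homeomorphism, carries the pair of components $\{S^-_x, S^+_x\}$ bijectively onto $\{S^-_{g^k(x)}, S^+_{g^k(x)}\}$. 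All that remains is to rule out, when $k$ is even, that $g^k$ swaps the two sides.

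For this I would track the coordinate $\Hu$. Let $\lambda$ be the unstable eigenvalue of the hyperbolic linear map $A$ from \cref{prop:hyptwo}, so $|\lambda| > 1$; since $\ker \piu$ is the stable eigenspace of $A$, elementary linear algebra gives $\piu \circ A = \lambda\, \piu$. Combined with the defining relation $A \circ H = H \circ g$, this yields $\Hu \circ g = \piu \circ A \circ H = \lambda\, \Hu$, and hence $\Hu \circ g^k = \lambda^k\, \Hu$. Because $\Hu$ lies a bounded distance from the linear functional $\piu$, which is unbounded, I may pick $y \in \bbR^2$ with $\Hu(y) < \Hu(x) - R$, so that $y \in S^-_x$ by the previous lemma. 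When $k$ is even, $\lambda^k = |\lambda|^k > 1$, and therefore
\[
    \Hu(g^k(y)) = \lambda^k\,\Hu(y) < \lambda^k\,(\Hu(x) - R) \le \lambda^k\,\Hu(x) - R = \Hu(g^k(x)) - R ,
\]
using $\lambda^k \ge 1$ and $R > 0$ for the last inequality. By the previous lemma this places $g^k(y)$ in $S^-_{g^k(x)}$. Since $g^k(y)$ lies in $g^k(S^-_x)$, which is one of the two disjoint sets $S^\pm_{g^k(x)}$, we conclude $g^k(S^-_x) = S^-_{g^k(x)}$, and the bijectivity noted above then forces $g^k(S^+_x) = S^+_{g^k(x)}$ as well.

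The hypothesis that $k$ is even enters only through the sign of $\lambda^k$: for odd $k$ with $\lambda < 0$ the map $g^k$ genuinely interchanges $S^-_x$ and $S^+_x$, which is why evenness cannot be dropped. So the one substantive ingredient is the identity $\Hu \circ g = \lambda\, \Hu$, tying the topological separation $S^\pm_x$ to the linear model $A$; beyond that the argument is routine bookkeeping and I do not anticipate a serious obstacle.
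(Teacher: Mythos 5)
Your proof is correct and follows essentially the same route as the paper: derive $\Hu\circ g = \lambda\,\Hu$ from $AH = Hg$, produce a witness point far on one side, and use the previous lemma to see that $g^k$ sends it to the corresponding side of $\Ws_g(g^k(x))$, so the two components must coincide. The only differences are cosmetic (you argue with $S^-$ where the paper uses $S^+$, and you spell out the component-permutation step the paper leaves implicit).
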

\begin{proof}
    Let $\lam$ denote the unstable eigenvalue of $A$.
    Then $A H = H g$ implies that
    $\lam \Hu = \Hu g$.
    As $\lam^k > 1$ and $H$ is surjective, there is
    a point $y  \in  \bbR^2$ such that both \linebreak
    $\Hu(y) > \Hu(x) + R$
    and
    $\lam^k \Hu(y) > \lam^k \Hu(x) + R$.
    Then %
    $g^k(S^+_x)$ and $S^+_{g^k(x)}$ intersect at
    $g^k(y)$ and the two sets are therefore equal.
\end{proof}
\begin{lemma} \label{lemma:premonotonic}
    If $y  \in  S^+_x$, then $\Hu(y)  \ge  \Hu(x)$.
\end{lemma}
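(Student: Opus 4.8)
The plan is to argue by contradiction and exploit the fact that $\Hu$ is expanded by $g$: a point of $S^+_x$ that violates the asserted inequality can be pushed, under a suitable even power of $g$, into the region where the lemma producing $R$ forces membership in $S^-$, while \cref{lemma:halftohalf} simultaneously forces it to stay in $S^+$.

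In detail, suppose $y \in S^+_x$ but $\Hu(y) < \Hu(x)$. Since $A H = H g$ and $\ker \piu$ is the stable eigenspace of $A$, one has $\piu \circ A = \lam\, \piu$ and hence $\Hu \circ g = \lam\, \Hu$ on $\bbR^2$, where $\lam$ is the unstable eigenvalue of $A$. Iterating, $\Hu(g^k(x)) - \Hu(g^k(y)) = \lam^k\bigl(\Hu(x) - \Hu(y)\bigr)$ for every $k \ge 1$. Because $|\lam| > 1$, I would choose $k$ even and large enough that $\lam^k\bigl(\Hu(x) - \Hu(y)\bigr) > R$. Then $\Hu(g^k(y)) < \Hu(g^k(x)) - R$, so the defining inclusions for the decomposition $\bbR^2 \sans \Ws_g(g^k(x)) = S^-_{g^k(x)} \cup S^+_{g^k(x)}$ place $g^k(y)$ in $S^-_{g^k(x)}$.

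On the other hand, $k$ is even, so \cref{lemma:halftohalf} gives $g^k(S^+_x) = S^+_{g^k(x)}$, and therefore $g^k(y) \in S^+_{g^k(x)}$. Since $S^-_{g^k(x)}$ and $S^+_{g^k(x)}$ are the two distinct, hence disjoint, connected components of $\bbR^2 \sans \Ws_g(g^k(x))$, this is a contradiction, and the lemma follows.

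I do not anticipate a genuine obstacle. The only delicate point is that the amplification step needs $k$ even so that $\lam^k > 1$ regardless of the sign of $\lam$ — which is precisely the hypothesis under which \cref{lemma:halftohalf} was established, and the reason the preceding two lemmas are phrased for even powers and for the $\Hu$-coordinate rather than for $\piu$ directly.
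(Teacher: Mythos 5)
Your argument is correct and is essentially identical to the paper's own proof: assume $\Hu(x)-\Hu(y)=\delta>0$, use $\Hu\circ g=\lam\,\Hu$ to amplify the gap past $R$ under a large even power $g^k$, and derive a contradiction between the inclusion $g^k(y)\in S^-_{g^k(x)}$ forced by the defining property of $R$ and the inclusion $g^k(y)\in S^+_{g^k(x)}$ given by \cref{lemma:halftohalf}. Your write-up is in fact cleaner than the paper's, which omits the $g^k$'s in its displayed identity.
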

\begin{proof}
    Suppose $y  \in  S^+_x$ and $\delta := \Hu(x) - \Hu(y) > 0$.
    Then $g^k(y)  \in  S^+_{g^k(x)}$ and
    \[
        \Hu(y) = \Hu(x) - \lam^k \delta > \Hu(x) - R
    \]
    for large positive even $k$.
    This gives a contradiction.
\end{proof}
\begin{prop} \label{prop:monotonic}
    Let $\alpha:\bbR \to \bbR^2$ be a parameterization of a center leaf of $g$.
    Then $\Hu \circ \alpha$ is monotonic.
    That is, up to possibly replacing $\alpha$ with the reverse
    parameterization,
    $\Hu \alpha(s)  \le  \Hu \alpha(t)$ holds
    for all $s  \le  t$.
\end{prop}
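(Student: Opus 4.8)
The plan is to control how each center leaf meets the stable leaves through its own points, and then to invoke \cref{lemma:premonotonic}. First I would record that the center leaf $C := \al(\bbR)$ is a properly embedded line: by exactly the argument used for stable leaves in the proof of \cref{prop:shomeo}, if $C$ accumulated on a point $y \in \bbR^2$ then $\Ws_g(y)$ would meet $C$ in more than one point, contradicting \cref{prop:twogps}; and $C$ is non-compact because $H(C)$ is an unstable leaf of $A$ by \cref{prop:ctou}. Hence for each $s$ the set $C \sans \{\al(s)\}$ has exactly two connected components, $\al((-\infty,s))$ and $\al((s,\infty))$, and by \cref{prop:twogps} neither of them meets $\Ws_g(\al(s))$; being connected, each therefore lies entirely in one of the two components $S^-_{\al(s)}, S^+_{\al(s)}$ of $\bbR^2 \sans \Ws_g(\al(s))$. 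For $s \ne t$ let $\phi(s,t) \in \{-,+\}$ be the sign with $\al(t) \in S^{\phi(s,t)}_{\al(s)}$.

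The crux is to show that $\phi$ is constant on the (connected) half-plane $\{(s,t) : s < t\}$; as $\phi$ takes only two values, it suffices to prove $\phi$ is locally constant. Fix $(s_0,t_0)$ with $s_0 < t_0$ and suppose $\al(t_0) \in S^+_{\al(s_0)}$. Pick a small closed disc $\overline W$ around $\al(t_0)$ disjoint from the closed set $\Ws_g(\al(s_0))$. Continuity of the stable foliation --- concretely, that $z_n \to z$, $p_n \to p$, $p_n \in \Ws_g(z_n)$ force $p \in \Ws_g(z)$, which comes from the uniform contraction along stable leaves --- gives a neighbourhood $V$ of $\al(s_0)$ with $\Ws_g(z) \cap \overline W = \emptyset$ for all $z \in V$. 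Joining $\al(t_0)$ to a point of the half-plane $\{\piu > \piu(\al(s_0)) + R_0\} \subset S^+_{\al(s_0)}$ by a compact path inside $S^+_{\al(s_0)}$ and shrinking $V$ so that $\Ws_g(z)$ also misses this path, one sees that $W \subset S^+_z$ for every $z \in V$. Thus $\al(t) \in S^+_{\al(s)}$ for all $(s,t)$ near $(s_0,t_0)$; the ``$-$'' case is symmetric, so $\phi$ is locally constant. (Alternatively, one can deduce constancy of $\phi$ combinatorially from the planar fact that, for points with distinct stable leaves, $y \in S^+_x$ holds if and only if $x \in S^-_y$.)

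Granting this, $\phi$ equals a constant $\varepsilon \in \{-,+\}$ on $\{s < t\}$. If $\varepsilon = +$ then $\al(t) \in S^+_{\al(s)}$ whenever $s < t$, so \cref{lemma:premonotonic} yields $\Hu(\al(t)) \ge \Hu(\al(s))$ and $\Hu \circ \al$ is non-decreasing. If $\varepsilon = -$, the same conclusion with signs reversed --- i.e. the statement ``$y \in S^-_x \Rightarrow \Hu(y) \le \Hu(x)$'', which is proved verbatim like \cref{lemma:premonotonic} after interchanging $S^+$ with $S^-$ and $\Hu$ with $-\Hu$ (an interchange under which the lemma producing $R$ and \cref{lemma:halftohalf} are both invariant) --- gives $\Hu \circ \al$ non-increasing. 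In either case, after possibly replacing $\al$ by its reverse parameterization, $\Hu \circ \al$ is non-decreasing, which is the assertion.

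The step I expect to be the main obstacle is the constancy of $\phi$: ruling out that a center curve ``changes side'' of the moving stable leaf $\Ws_g(\al(s))$ as $s$ varies. This is geometrically clear from transversality of $\Ec$ and $\Es$, but turning it into a proof requires either the limiting/continuity property of the stable foliation on compact sets used above or the separating-lines argument; the rest of the proof is then a direct application of \cref{prop:twogps} and \cref{lemma:premonotonic}.
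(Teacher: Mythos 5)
Your argument is correct and follows exactly the route the paper intends: its entire proof is the single sentence ``This follows from \cref{prop:twogps} and \cref{lemma:premonotonic}'', and your write-up supplies the details of precisely that deduction (the center leaf meets each of its stable leaves once, so it stays on one side, and \cref{lemma:premonotonic} converts side into inequality). The local-constancy argument for the side function and the mirrored version of \cref{lemma:premonotonic} for the ``$-$'' case are both sound.
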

\begin{proof}
    This follows from \cref{prop:twogps} and \cref{lemma:premonotonic}.
\end{proof}
\begin{cor} \label{cor:twov}
    For each $v \in \bbR^2$,
    $H \inv(v)$ is
    either a point or a compact curve lying inside a center leaf.
\end{cor}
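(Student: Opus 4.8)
The plan is to combine the structural results just established---\cref{prop:ctou} and \cref{prop:monotonic}---with the surjectivity of the Franks semiconjugacy $H$. Fix $v \in \bbR^2$. Since $H$ is surjective we may choose $x \in H\inv(v)$, and then for any $y \in H\inv(v)$ we have $H(y) = v \in \Wu_A(v) = \Wu_A(H(x))$, so \cref{prop:ctou} gives $y \in \Wc_g(x)$. Thus $H\inv(v)$ is contained in the single center leaf $\Wc_g(x)$, which already accounts for the ``inside a center leaf'' part of the statement; it remains to see that this set is either a point or a compact subarc of that leaf.

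Next I would pin down $H\inv(v)$ inside $\Wc_g(x)$ using the coordinate $\Hu$. Recall $\Wu_A(v) = v + \ker \pis$ and $\Ws_A(v) = v + \ker \piu$, so $\piu$ restricts to a homeomorphism $\Wu_A(v) \to \bbR$ and $\Wu_A(v) \cap \Ws_A(v) = \{v\}$. Using the ``if and only if'' in \cref{prop:ctou} together with surjectivity of $H$, one checks that $H$ maps $\Wc_g(x)$ \emph{onto} $\Wu_A(v)$: given $w \in \Wu_A(v)$, pick $z$ with $H(z) = w$; then $H(z) \in \Wu_A(H(x))$, so $z \in \Wc_g(x)$ and $w \in H(\Wc_g(x))$. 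Let $\alpha : \bbR \to \bbR^2$ parameterize $\Wc_g(x)$ as in \cref{prop:monotonic}, so that $\Hu \circ \alpha = \piu \circ (H \circ \alpha)$ is monotone; by the previous sentence it is also a continuous surjection of $\bbR$ onto $\bbR$. An elementary argument shows that the preimage of a point under a continuous monotone surjection of $\bbR$ is a nonempty compact interval, so $(\Hu \circ \alpha)\inv(\piu(v)) = [a,b]$ for some $a \le b$.

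Finally I would verify that $H\inv(v) = \alpha([a,b])$. For ``$\supseteq$'': if $p \in \Wc_g(x)$ and $\Hu(p) = \piu(v)$, then $H(p) \in \Wu_A(v)$ by \cref{prop:ctou}, while $\piu\bigl(H(p) - v\bigr) = 0$ forces $H(p) \in \Ws_A(v)$; hence $H(p) \in \Wu_A(v) \cap \Ws_A(v) = \{v\}$. For ``$\subseteq$'': since $H\inv(v) \subof \Wc_g(x)$ and $H(p) = v$ implies $\Hu(p) = \piu(v)$. Therefore $H\inv(v) = \alpha([a,b])$, a single point if $a = b$ and otherwise a compact curve inside the center leaf $\Wc_g(x)$, as claimed.

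The step I expect to be the crux is the surjectivity of $\Hu \circ \alpha$ onto $\bbR$: without it, $(\Hu\circ\alpha)\inv(\piu(v))$ could a priori be an unbounded ray, corresponding to $H\inv(v)$ being a non-compact ray in a center leaf. This is precisely where both the bi-implication of \cref{prop:ctou} (to push a whole center leaf onto a whole unstable leaf of $A$) and the global surjectivity of $H$ are essential; everything else reduces to bookkeeping with the linear structure of $A$ and the monotonicity supplied by \cref{prop:monotonic}.
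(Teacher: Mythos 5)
Your proof is correct and takes essentially the same route as the paper: \cref{prop:ctou} confines $H\inv(v)$ to a single center leaf, \cref{prop:monotonic} makes $\Hu \circ \alpha$ monotone so the fiber corresponds to a closed interval of parameters, and the only remaining issue is boundedness. The one (harmless) divergence is in that last step: the paper gets boundedness from the fact that $H$ is a finite distance from the identity, while you get it from surjectivity of $\Hu \circ \alpha$ (via surjectivity of $H$ and the bi-implication in \cref{prop:ctou}); both are valid.
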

\begin{proof}
    This follows from
    propositions \ref{prop:ctou} and \ref{prop:monotonic},
    and the fact that $H$ is a finite distance from
    the identity.
\end{proof}

\section{Branching foliations} \label{sec:bran} %

We now list a number of properties which hold for all partially hyperbolic
diffeomorphisms in dimension 3.  These properties follow from
the branching foliation theory developed
by
Brin, Burago, and Ivanov \cite{BBI1, BI, BBI2}.

A
\emph{branching foliation}
on a Riemannian 3-manifold $M$
is a collection $\cF_0$ of immersed surfaces called \emph{leaves}
such that
\begin{enumerate}
    \item every leaf is complete under the Riemannian metric
    pulled back from $M$,

    \item no two leaves topologically cross,
    
    \item if a sequence of leaves converges in the compact-open topology,
    then the limit surface is also a leaf, and

    \item through every point of $M$ there is at least one leaf.
\end{enumerate}

\begin{thm} \label{thm:branching}
    Let $f$ be a partially hyperbolic diffeomorphism of a closed 3-manifold $M$
    such that
    $\Eu$, $\Ec$, and $\Es$ are oriented and $f$ preserves these orientations.
    Then there is a branching foliation $\Fcs_0$ on $M$
    such that each leaf is tangent to $\Ecs$.
\end{thm}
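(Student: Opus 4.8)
The plan is to recall the construction of Brin, Burago, and Ivanov, since Theorem~\ref{thm:branching} is part of the branching foliation theory of \cite{BI, BBI2}; what follows is the strategy of their argument. First I would reduce the statement to two things: (a) through every point of $M$ there passes a complete, $C^1$, immersed surface tangent to $\Ecs$, and (b) one can make these choices so that the resulting family of surfaces also satisfies the non-crossing axiom (2) and the limit-closure axiom (3). The organizing idea is to build each candidate leaf as a union of strong stable leaves: since $\Es \subof \Ecs$, a surface tangent to $\Ecs$ is, locally, ruled by leaves of the genuine foliation $\Ws$, so it should be determined by a single transversal curve tangent to $\Ec$.

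Concretely, for a point $x$ I would first produce a center curve $\gamma$ through $x$ by integrating the continuous line field $\Ec$; Peano's theorem gives such a curve, and its possible non-uniqueness is exactly the source of the branching. I would then take the candidate leaf to be $\bigcup_{y \in \gamma} \Ws(y)$. Showing that this union is an honest $C^1$ surface tangent to $\Ecs$ everywhere --- rather than a set with creases or self-tangencies --- is where the quantitative machinery enters: the plaque-family / graph-transform estimates of Hirsch--Pugh--Shub, together with the fact that $\Es$ is contracted more strongly than anything that occurs along $\Ec$, control the strong-stable holonomy and force the ruled surface to be a submanifold of the right class. Completeness is then automatic: the leaves of $\Ws$ are complete, and $\gamma$ extends for all parameter time on the compact manifold $M$. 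The hypotheses that $\Es$, $\Ec$, $\Eu$ are oriented and preserved by $f$ are used to fix a global co-orientation of $\Ecs$ and a preferred direction along $\Ec$, which is what lets the local pieces be glued coherently (with more work this also makes the branching foliation $f$-invariant and unique, though only existence is asserted here).

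It then remains to check the two non-formal axioms. For (2), if two leaves $S_1$ and $S_2$ crossed topologically, then --- since both are ruled by leaves of the genuine foliation $\Ws$ --- on a region where they meet they would have to share strong stable arcs, so the crossing would reduce to a transverse crossing of two center curves through a common point; but two solutions of the ODE $\dot\gamma \in \Ec(\gamma)$ through a common point cannot cross transversally (by the Kneser-type description of the solution set of an ODE with continuous right-hand side: such solutions fill a region bounded by extremal solutions and may only branch). For (3), all leaves are tangent to the \emph{fixed} continuous distribution $\Ecs$ on the compact manifold $M$, hence have uniformly bounded geometry, so a compact-open-convergent sequence of leaves has a $C^1$ limit that is again complete and tangent to $\Ecs$; one then simply declares the family of leaves to be closed under such limits. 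Axioms (1) and (4) hold by construction.

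The main obstacle is the quantitative step in the second paragraph: proving that the ruled surfaces --- equivalently, the limits of smooth almost-integral approximations of $\Ecs$ --- are genuine $C^1$ surfaces tangent to $\Ecs$, and that distinct leaves do not cross. Both rest on the normal-hyperbolicity estimates comparing the contraction along $\Es$ with the distortion along $\Ec$, and this is precisely the technical heart of \cite{BI, BBI2}; I would invoke it rather than reprove it. The remaining ingredients --- the strong-stable ruling, the Kneser argument for the branching, and the Arzel\`a--Ascoli argument for limit-closure --- are soft.
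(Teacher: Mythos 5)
The paper does not prove \cref{thm:branching} at all: it is imported wholesale from Burago--Ivanov, with the text directly below the statement reading ``See \cite{BI} for further details and the proof.'' Since you likewise defer the technical heart (the graph-transform/normal-hyperbolicity estimates showing the stable-saturated surfaces are genuine $C^1$ surfaces tangent to $\Ecs$, and the non-crossing of distinct leaves) to \cite{BI, BBI2}, your proposal is in substance the same as the paper's treatment, and your surrounding sketch is a fair outline of what \cite{BI} actually does: leaves ruled by strong stable manifolds over center curves, non-uniqueness of center curves as the source of branching, orientations used to select leaves coherently and to get $f$-invariance, and closure under compact-open limits.

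One caveat on your list of ``soft'' ingredients: the Kneser-type argument for axiom (2) is not valid as stated. Two integral curves of a merely continuous line field through a common point are tangent there, but tangential topological crossings are not excluded by any general ODE funnel theorem; ruling them out is a dynamical fact, not a soft one. Indeed, even the two-dimensional analogue used in this paper (unique integrability of $\Ec$ inside a $cs$-torus, \cref{prop:uniqueinttwo}) requires the argument of \cite{pujals2007integrability}, which uses the contraction along $\Es$ in an essential way, and the non-crossing lemma in \cite{BI} is likewise one of the delicate steps, proved with the dynamics and the orientation of $\Eu$ rather than by pure ODE reasoning. You partially cover yourself by also listing non-crossing among the items you invoke \cite{BI} for, so this is an internal inconsistency in the write-up rather than a fatal gap; but if the Kneser argument were meant to carry weight, it would not. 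Similarly, ``simply declaring'' the family closed under limits requires checking that limits of non-crossing, $\Ws$-saturated leaves tangent to $\Ecs$ retain these properties --- true, but part of the construction in \cite{BI} rather than an afterthought.
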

See \cite{BI} for further details and the proof of \cref{thm:branching}.
In the setting of the theorem,
let $\tM$ be the universal cover of $M$.
Lift the branching foliation $\Fcs_0$ on $M$ to a branching foliation $\Fcs$ on $\tM$ by
taking every possible lift of every leaf.
In this paper, we almost exclusively work with the lifted branching foliation
on the universal cover and
\cref{thm:branching} may be restated as follows.

\begin{cor} \label{cor:branching}
    Let $f$ be the lift of a partially hyperbolic diffeomorphism to the
    universal cover $\tM$.
    Then there is a branching foliation $\Fcs$ tangent to $\Ecs$ on $\tM$
    such that
    if a deck transformation $\gam$ : $\tM  \to  \tM$
    preserves the orientations of $\Eu$, $\Ec$, and $\Es$
    as bundles over $\tM$,
    then $\gam$ takes leaves to leaves.
    Similarly, if $f$ preserves these orientations, then $f$ maps leaves to
    leaves.
\end{cor}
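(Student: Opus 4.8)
The plan is to deduce this from \cref{thm:branching} by passing to a finite cover and then lifting all the way to $\tM$. First I would note that since $\tM$ is simply connected, each of $\Es$, $\Ec$, $\Eu$ is an orientable line bundle over $\tM$, and the rule assigning to a deck transformation whether it reverses the orientation of each of these three bundles is a homomorphism $\rho\colon\pi_1(M)\to(\bbZ/2)^3$. Its kernel $\Gam_0$ is a normal subgroup of finite index which is \emph{exactly} the set of deck transformations that preserve all three orientations; let $M_0=\tM/\Gam_0$ be the corresponding finite regular cover of $M$, on which $\Es$, $\Ec$, $\Eu$ are now orientable, and fix orientations. A short computation with conjugation shows that $\rho$ is unchanged when precomposed with conjugation by $f$, so $\rho\circ f_*=\rho$ and $f$ descends to a diffeomorphism $f_0$ of $M_0$, of which the given $f$ is a lift; being a finite lift of a partially hyperbolic diffeomorphism, $f_0$ and all its powers are partially hyperbolic.

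Next, pick $k\in\{1,2\}$ so that $f_0^k$ preserves the chosen orientations: since on the connected manifold $M_0$ each orientation is either globally preserved or globally reversed by $f_0$, the choice $k=2$ always works, and $k=1$ works precisely when $f$ preserves the orientations of the three bundles over $\tM$. Applying \cref{thm:branching} to $(M_0,f_0^k)$ produces a branching foliation $\Fcs_0$ on $M_0$ with every leaf tangent to $\Ecs$; the construction in \cite{BI} moreover yields one that is $f_0^k$-invariant. I would then let $\Fcs$ be the collection of all connected lifts to $\tM$ of all leaves of $\Fcs_0$, and check that $\Fcs$ is a branching foliation: the projection $p\colon\tM\to M_0$ is a local isometry, so each lifted leaf is complete and tangent to $\Ecs$, every point of $\tM$ lies on a leaf because its image does, and convergence of leaves in the compact-open topology is compatible with $p$ in both directions; that no two leaves of $\Fcs$ topologically cross follows by pushing a hypothetical crossing down by $p$ to a crossing of leaves of $\Fcs_0$, using the local structure of branching foliations from \cite{BI} to handle the case of two distinct lifts of one leaf. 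The equivariance statements are then immediate: $\Fcs$ is by construction invariant under the deck group of $p$, namely $\Gam_0=\pi_1(M_0)$, which is exactly the group of orientation-preserving deck transformations of $\tM$; and if $f$ preserves the orientations over $\tM$ then $k=1$, so $\Fcs_0$ is $f_0$-invariant and hence its full preimage $\Fcs$ is carried to itself by $f$.

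The step requiring the most care is not deep but is genuine: confirming that the lift of a branching foliation is again a branching foliation — in particular the no-crossing axiom for two lifts of a common leaf — is where one must appeal to the structural results of Brin, Burago, and Ivanov rather than argue by hand, and one should flag that \cref{thm:branching} as literally stated asserts only existence of a branching foliation, so the $f$-invariance clause relies on the sharper equivariant form of the construction in \cite{BI}. The only other point to watch is the covering-space bookkeeping that identifies ``deck transformation preserving the three orientations'' with ``element of $\pi_1(M_0)$'', which is what makes the deck-equivariance statement automatic once the lift has been defined.
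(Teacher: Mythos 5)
Your proof is correct and follows essentially the same route as the paper, which treats the corollary as a near-restatement of \cref{thm:branching}: apply the theorem on a cover/iterate where the orientability hypotheses hold and then lift every leaf to $\tM$, with equivariance under orientation-preserving deck transformations (the subgroup the paper later calls $Z_0$) coming for free from the construction. You simply spell out the finite orientation cover $M_0$ and the no-crossing check for lifted leaves in more detail than the paper does.
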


The branching foliation on $\tM$ has the following properties,
as proved in \cite[Section 3]{BBI2}.

\begin{prop} \label{prop:ssat}
    Each leaf $L  \in  \Fcs$ is saturated by stable leaves.
    That is, if $x  \in  L$, then $\Ws(x)$ is a subset of $L$.
\end{prop}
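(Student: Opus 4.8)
The plan is to deduce the statement from two ingredients already available: the unique integrability of the stable bundle $\Es$ (which produces the foliation $\Ws$), and property~(1) in the definition of a branching foliation, namely that every leaf of $\Fcs$ is complete in the metric pulled back from $\tM$.

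Fix a leaf $L \in \Fcs$, viewed as an abstract complete Riemannian surface equipped with its immersion $\iota : L \to \tM$, and fix $x \in L$. Since leaves of $\Fcs$ are tangent to $\Ecs = \Ec \oplus \Es$, the pullback $\iota^{*}\Es$ is a continuous unit line field on $L$. Let $\gamma : (a,b) \to L$ be a maximal integral curve of this line field with $\gamma(0) = x$, parametrized by arc length. Then $\iota \circ \gamma$ is a $C^{1}$ curve in $\tM$ tangent to $\Es$, so by unique integrability of $\Es$ it coincides with an arc-length parametrization of an open subarc of the stable leaf $\Ws(x)$. Hence it is enough to show that $a = -\infty$ and $b = +\infty$, since then $\iota \circ \gamma$ sweeps out all of $\Ws(x)$.

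Suppose instead $b < \infty$. Then $\gamma|_{[0,b)}$ has finite length, so it is a Cauchy curve and, by completeness of $L$, converges to some $p \in L$ as $t \to b^{-}$. Since $\iota^{*}\Es$ is defined and continuous near $p$, the integral curve through $p$ glues on to $\gamma$ and extends it past $t = b$, contradicting maximality; thus $b = +\infty$, and the identical argument gives $a = -\infty$. This proves $\Ws(x) \subset L$. The only delicate point is that leaves of a branching foliation are immersed and not embedded, so both the statement and the completeness argument must be run on the abstract surface $L$ and then pushed forward by $\iota$; beyond this bookkeeping, nothing about the dynamics is used except the existence and uniqueness of $\Ws$.
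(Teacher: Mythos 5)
Your argument is correct, but it is worth noting that the paper does not actually prove this proposition: it is quoted from Section~3 of \cite{BBI2}, where the stable saturation of the leaves is essentially built into the Burago--Ivanov construction (the surfaces forming the branching foliation are assembled from stable-saturated pieces from the outset). Your route is genuinely different and a posteriori: you show that \emph{any} complete immersed surface tangent to $\Ecs$ must be $\Ws$-saturated, using only two inputs -- completeness of leaves (item (1) of the definition of a branching foliation) and unique integrability of $\Es$. The completeness argument (finite-length tail is Cauchy, hence converges in the abstract leaf, hence the integral curve of $\iota^*\Es$ extends) is sound, and your final step is fine once one adds the small observation that a unit-speed curve defined on all of $\bbR$ inside the one-dimensional leaf $\Ws(x)\cong\bbR$ has derivative of constant sign in the leaf coordinate and therefore surjects onto the leaf. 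The one input you should be explicit about is that you need unique integrability of the \emph{bundle} $\Es$ (every $C^1$ curve tangent to $\Es$ lies in a single leaf of $\Ws$), which is strictly stronger than the uniqueness of the foliation $\Ws$ stated in the introduction; this is classical for the strong bundles of a (pointwise) partially hyperbolic diffeomorphism but deserves a citation. What your approach buys is independence from the particulars of the Brin--Burago--Ivanov construction; what it costs is reliance on that external integrability fact and some ODE bookkeeping (orientability of the line field along the curve, existence of maximal integral curves of a merely continuous line field) that the construction-based proof avoids.
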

\begin{prop} \label{prop:csdivide}
    Each leaf $L  \in  \Fcs$ is a properly embedded plane which separates $\tM$
    into two half spaces.  That is, $\tM \without L$ has two connected components
    $L^+$ and $L^-$.
\end{prop}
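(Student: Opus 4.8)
The plan is to prove the proposition in three steps: first that $L$ is properly embedded, then that it is simply connected (hence a plane), and finally that its complement in $\tM$ has exactly two components.

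\emph{Step 1 (proper embedding).} A leaf is properly embedded once we know that it is a closed subset of $\tM$ along which the defining immersion is injective, so the goal is to rule out accumulation. Suppose $L$ accumulated on a point $p\in\tM\without L$; choose a small disk $D$ transverse to $\Es$ at $p$. By \cref{prop:ssat} every plaque of $L$ meets $D$ in a short curve tangent to $\Ec$, as does a leaf $L_p$ through $p$ supplied by axiom (4), and by the non-crossing axiom (2) all of these curves are pairwise disjoint and linearly ordered in $D$. The assumed accumulation then produces, via axiom (3), a limiting curve through $p$ lying in an $\Fcs$-leaf; saturating by stable leaves and invoking non-crossing once more forces $p$ to already lie on $L$, a contradiction. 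This is the point-set argument of \cite[Section~3]{BBI2}.

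\emph{Step 2 (simple connectivity).} This is the heart of the matter. Since $T_yL=\Ecs(y)$ for $y\in L$, the line fields $\Ec|_L$ and $\Es|_L$ integrate to two transverse one-dimensional foliations of the surface $L$, the leaves of the second being exactly the stable leaves contained in $L$, each a properly embedded copy of $\bbR$. In particular $L$ is noncompact, since a compact surface admits no properly embedded line. If $L$ were not simply connected it would carry an essential simple closed curve, and I would eliminate this by approximating $\Fcs$, following Brin--Burago--Ivanov \cite{BI}, by a genuine foliation tangent to a narrow cone field around $\Ecs$ together with an $\epsilon$-close leaf-surjection: that foliation is taut, the stable foliation furnishing a closed transversal through every point, so by Novikov's theorem it has no Reeb component, whence its lift to the simply connected manifold $\tM$ has only planar leaves, and the leaf-surjection carries this back to $\Fcs$. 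An alternative is to argue directly with the transverse pair of foliations of $L$: after passing to a power and a finite cover so that the dynamics preserves all three orientations and hence maps $\Fcs$-leaves to $\Fcs$-leaves by \cref{cor:branching}, iterate the essential loop and combine contraction along $\Es$ with a Poincar\'e--Bendixson analysis to reach a contradiction. I expect this step to be the main obstacle, since neither the branching-foliation axioms nor \cref{prop:ssat} alone preclude essential loops in a leaf.

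\emph{Step 3 (separation).} Because $\Eu$ is oriented, the normal line field of $L$ is co-oriented, so $L$ is a two-sided properly embedded surface and admits a bicollar. If $L$ failed to separate $\tM$, there would be a loop meeting $L$ transversally in exactly one point, representing a nonzero class in $H_1(\tM;\bbZ/2\bbZ)$; this is impossible since $\tM$ is simply connected. Hence $\tM\without L$ is disconnected, and a connected two-sided hypersurface with a bicollar separates a connected manifold into precisely two components, which are the asserted half-spaces $L^+$ and $L^-$. (In the approximation picture of Step 2 this is immediate, since the leaf space of the genuine foliation on $\tM$ is homeomorphic to $\bbR$.) Steps 1 and 3 are routine once the planarity of leaves in Step 2 is in hand.
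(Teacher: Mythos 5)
The paper does not actually prove this proposition: it is quoted verbatim from \cite[Section 3]{BBI2}, and your overall architecture --- approximate $\Fcs$ by a genuine foliation tangent to a narrow cone about $\Ecs$, exclude Reeb components, deduce via Novikov--Palmeira that the lifted leaves are properly embedded planes separating the simply connected $\tM$, and transfer back through the leaf correspondence --- is exactly the strategy of that source. Steps 1 and 3 are indeed routine once Step 2 is in hand, and are best deduced from the approximating Reebless foliation; your direct accumulation argument in Step 1 does not work as written, since axiom (3) only produces \emph{some} limit leaf through the accumulation point $p$, not membership of $p$ in $L$, and in a branching foliation distinct leaves may be arbitrarily close or even share points.

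The genuine gap is in your justification of Reeblessness. You write that the approximating foliation ``is taut, the stable foliation furnishing a closed transversal through every point.'' This is wrong twice over. First, the stable foliation lies \emph{inside} the leaves, not transverse to them (\cref{prop:ssat}); the transverse direction is $\Eu$. Second, even for $\Eu$ the leaves are properly embedded lines, not circles, so they furnish no closed transversals, and the mere existence of a transverse line field never implies tautness (the Reeb foliation of $S^3$ admits a transverse vector field). The input that actually excludes Reeb components here is dynamical, not topological: the boundary of a Reeb component is a compact leaf transverse to $\Eu$, so the component traps complete unstable rays (or, via Novikov, a Reeb component yields a null-homotopic closed transversal, which can be taken tangent to the unstable cone), and this is incompatible with the forward expansion of unstable arcs as quantified by the ``Key Lemma'' of \cite{BBI1} and \cref{prop:volume} --- an arbitrarily long unstable curve cannot stay in a region of bounded volume, and a closed curve tangent to the unstable cone cannot exist. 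Without some version of this argument, Step 2 --- and hence the whole proof --- is unsupported; the alternative Poincar\'e--Bendixson sketch you offer is too vague to fill the gap.
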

\begin{prop} \label{prop:uscunique}
    Each leaf of $\Fcs$ intersects an unstable leaf in at most one point.
\end{prop}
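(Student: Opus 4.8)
The plan is to argue by contradiction via backward iteration, using two facts: that $f\inv$ uniformly contracts unstable curves, and that (a power of) $f$ permutes the leaves of $\Fcs$. Since $\Fcs$ is the branching foliation provided by \cref{thm:branching}, we are in the situation where $f$ preserves the orientations of $\Es$, $\Ec$, $\Eu$, so by \cref{cor:branching} the map $f$ carries each leaf of $\Fcs$ to a leaf of $\Fcs$, and hence so does every iterate $f\invn$. (If one starts with an $f$ not preserving these orientations, the standard passage to a finite cover and a power $f^k$ reduces to this case, and the statement is unchanged since both the leaf $L$ and the relevant unstable leaf are unchanged.) Moreover, by compactness of the base $M$ there is $\lambda > 1$ with $\|Df v\| \ge \lambda$ for every unit vector $v \in \Eu$, so $f\inv$ shrinks the length of any arc tangent to $\Eu$ by at least the factor $\lambda\inv$ per application.

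The one substantive ingredient is a local transversality estimate: there is $\delta_0 > 0$ such that no arc tangent to $\Eu$ of length at most $\delta_0$ and positive length has both of its endpoints on a single leaf of $\Fcs$. This is routine from the uniform continuity of the bundle $\Ecs$ together with the uniform transversality of unstable curves to $\Ecs$. Concretely, there is a uniform radius $r_0$ so that, in the chart $\exp_p$, every leaf of $\Fcs$ meeting $B(p,r_0)$ is a graph over $\exp_p(\Ecs(p))$ with Lipschitz constant as small as we like; on the other hand an arc tangent to $\Eu$ issuing from $p$, parametrized by arc length, makes an angle bounded below with $\Ecs(p)$ throughout a short initial segment, and therefore its $\Eu(p)$-component is monotone and grows at a definite rate, so the arc separates from any such graph at a definite rate. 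Choosing $\delta_0$ small compared to $r_0$ and this rate forces any tangent-to-$\Eu$ arc of length $\le \delta_0$ to leave the leaf through $p$ and never return.

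With these in hand the proof is short. Suppose, for contradiction, that some leaf $L \in \Fcs$ meets an unstable leaf in two distinct points $x \ne y$, and let $\gamma$ be the compact arc tangent to $\Eu$ joining $x$ to $y$ inside that unstable leaf; then $\gamma$ has positive length. For each $n \ge 0$, the arc $f\invn(\gamma)$ is again a positive-length arc tangent to $\Eu$, its two endpoints $f\invn(x) \ne f\invn(y)$ both lie on the leaf $f\invn(L) \in \Fcs$, and $\length(f\invn(\gamma)) \le \lambda^{-n}\,\length(\gamma) \to 0$. For $n$ large enough this length is below $\delta_0$, contradicting the local transversality estimate. Hence $L$ meets each unstable leaf in at most one point.

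I expect the only real obstacle to be making the local transversality estimate fully rigorous — in particular the monotonicity of the $\Eu(p)$-component of a short unstable arc, which requires controlling how $\Ecs(q)$ varies as $q$ ranges over the arc. Everything else (invariance of $\Fcs$ under $f$, the uniform contraction of $\Eu$ by $f\inv$, the shrinking of arc length, and the final contradiction) is elementary bookkeeping.
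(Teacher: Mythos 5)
Your reduction via backward iteration is sound as bookkeeping, but it shifts the entire content of the proposition onto your ``local transversality estimate,'' and that estimate has a genuine gap: it is not a routine consequence of uniform transversality. The chart argument you sketch only shows that a short unstable arc issuing from $p$ cannot return to the \emph{plaque} of $L$ through $p$, i.e.\ to the one connected component of $L \cap B(p,r_0)$ containing $p$. A single leaf of $\Fcs$ may have several plaques in $B(p,r_0)$: nothing in the definition of a branching foliation, nor in tangency to a continuous bundle, provides a uniform $\delta_0$ below which every leaf is a single Lipschitz graph in every $\delta_0$-ball (a leaf may repeatedly shadow itself or accumulate on a compact leaf). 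A short unstable arc could then run from one plaque of $L$ to a \emph{different} plaque of the same leaf, and your separation-at-a-definite-rate argument says nothing about this; so your ``local estimate'' is essentially the proposition itself at small scales, and the argument is circular where it matters. Note also that the statement concerns the lifted branching foliation on the universal cover $\tM$; on $M$ itself the analogous statement is false (a dense $cs$-leaf meets an unstable leaf infinitely often), so any correct proof must use something global about $\tM$, which your argument never does. You flagged the monotonicity of the $\Eu(p)$-component as the main obstacle, but that part is fine; the multi-plaque issue is the real one.

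The missing global input is exactly what the source the paper cites for this proposition (Section 3 of \cite{BBI2}) uses: by \cref{prop:csdivide} each leaf $L$ is a properly embedded plane separating $\tM$ into components $L^-$ and $L^+$, and the orientability assumptions in \cref{thm:branching} make $\Eu$ point coherently into $L^+$ at every point of $L$. An unstable curve therefore crosses $L$ only in the direction from $L^-$ to $L^+$; if it met $L$ twice, then between two consecutive intersection points it would have to travel from $L^+$ back into $L^-$ without touching $L$, which is impossible since $L$ is the common boundary. This needs no iteration and no local estimate; if you substitute it for your chart argument, the backward iteration in your scheme becomes superfluous.
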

\begin{prop} \label{prop:volume}
    There is a uniform constant $C>0$ such that if $J$ is an unstable segment,
    then $\volume U_1(J)  \ge  C \cdot \length(J)$.
\end{prop}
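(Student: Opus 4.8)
The plan is to separate a soft local estimate from the one genuinely dynamical ingredient. First I would fix, using only compactness of $M$ and continuity of the splitting, a scale $\rho_0\in(0,1]$ so small that for every $x\in\tM$ the map $\exp_x$ is a bi-Lipschitz chart (with constant $2$, say) on the ball of radius $\rho_0$ about the origin, inside which every unstable segment through $x$ is the graph of a $1$-Lipschitz function over the line $\Eu(x)$ and every surface tangent to $\Ecs$ is the graph of a $1$-Lipschitz function over the plane $\Ecs(x)$. I would also fix $\theta_0>0$ with $\angle(\Eu(x),\Ecs(x))\ge\theta_0$ for all $x$, and a lower bound $v_0>0$ for the volume of every ball of radius $\tfrac12$ in $\tM$.

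For the local estimate: if $\length(J)\le\rho_0/2$ then $J$ lies in the chart at its initial point as an almost-flat graph over $\Eu$, so its projection to that line is an interval of length $\ge\tfrac12\length(J)$, and the $(\rho_0/4)$-neighborhood of $J$ inside the chart contains a ``slab'' over that interval of volume at least $c_0\length(J)$ for a uniform $c_0>0$; since $\rho_0\le1$ this gives $\volume U_1(J)\ge c_0\length(J)$. For $\length(J)\le1$ one may instead use $\volume U_1(J)\ge\volume B(J(0),\tfrac12)\ge v_0$. So with any $C\le\min\{c_0,v_0\}$ the inequality holds for all $J$ with $\length(J)\le1$, and it remains to treat long segments. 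There the difficulty is that the slabs produced above may overlap uncontrollably if $J$ folds back on itself.

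This is where \cref{prop:uscunique} and \cref{prop:csdivide} enter. Let $\mathcal U\supseteq J$ be the unstable leaf. By \cref{prop:uscunique} each leaf of $\Fcs$ meets $\mathcal U$ in at most one point, so the center-stable leaves $L_x$ through distinct points $x\in\mathcal U$ are pairwise distinct; by \cref{prop:csdivide} each is a properly embedded plane separating $\tM$, and two of them cannot cross, so the family $\{L_x\}_{x\in\mathcal U}$ is linearly ordered by inclusion of half-spaces, the order agreeing with the order along $\mathcal U$ (for $x\in\mathcal U$, the two arcs of $\mathcal U\setminus\{x\}$ lie in the two components of $\tM\setminus L_x$). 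This provides a monotone ``transverse coordinate'' along $\mathcal U$, and the hard part is to combine its monotonicity with the uniform transversality $\angle(\Eu,\Ecs)\ge\theta_0$ to show that $\mathcal U$ advances a definite transverse distance per unit of arclength, and hence to derive a uniform bound $\length(J)\le C_1(\diam J+1)$ for every unstable segment: an unstable segment cannot fold back tightly. This is the one non-formal step, and it is carried out in \cite{BBI2}; it genuinely uses the branching-foliation structure of $\Fcs$, not merely the formal consequences quoted above.

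Granting the bound $\length(J)\le C_1(\diam J+1)$, I would finish as follows. For long $J$ it forces $\diam J\ge\tfrac12\length(J)/C_1$. Choose $a,b\in J$ with $d(a,b)=\diam J$; then $x\mapsto d(x,a)$ is $1$-Lipschitz and takes every value in $[0,\diam J]$ on $J$, so I can pick $z_0,\dots,z_p\in J$ with $d(z_j,a)=2j$, where $p=\lfloor\diam J/2\rfloor$. Since $d(z_j,z_{j'})\ge|d(z_j,a)-d(z_{j'},a)|\ge2$ for $j\ne j'$, the balls $B(z_j,\tfrac12)$ are pairwise disjoint and contained in $U_1(J)$, whence $\volume U_1(J)\ge(p+1)v_0\ge v_0\,\diam J/2\ge C\,\length(J)$ for a uniform $C>0$. (Alternatively one can retain the slabs from the local estimate and use $\length(J)\le C_1(\diam J+1)$, applied to subsegments, to bound how many of them can overlap.) The whole argument thus rests on the middle paragraph — showing that an unstable segment is ``spread out'' in proportion to its length — which I expect to be the genuine obstacle.
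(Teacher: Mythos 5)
The paper does not prove this proposition at all: it is quoted verbatim from \cite[Section 3]{BBI2}, so the only meaningful comparison is with the argument there. Your local estimate for short segments and your final ball-packing step are fine, but the entire weight of your proof rests on the middle claim that $\length(J)\le C_1(\diam J+1)$ for unstable segments, and this is a genuine gap. First, that inequality is false for general partially hyperbolic diffeomorphisms in dimension three, which is the generality in which the proposition is stated and used: for the time-one map of an Anosov geodesic flow on a hyperbolic surface, an unstable (horocyclic) segment of length $L$ in the universal cover has diameter of order $\log L$, yet the volume estimate still holds there (balls in that geometry have exponentially large volume). So the volume estimate cannot be reduced to a length-versus-diameter bound. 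Second, even in the setting of this paper (manifolds carrying $cs$-tori, where unstable leaves do end up quasi-isometrically embedded), the logical order is the reverse of what you propose: quasi-isometry-type statements such as \cref{prop:segubound} are \emph{deduced from} the volume estimate by an iteration argument, so deferring your middle step to \cite{BBI2} is circular --- that is not what \cite{BBI2} proves on the way to \cref{prop:volume}.

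The actual argument in \cite{BBI2} avoids any global lower bound on $\diam J$. One subdivides $J$ into subsegments of length one and shows directly that the unit neighbourhoods of these subsegments have uniformly bounded overlap multiplicity: if a point $y$ were within distance $1$ of many of them, then near $y$ the unstable curve would pass many times through a fixed ball, and the local product structure of the branching foliation $\Fcs$ with the unstable direction, together with \cref{prop:uscunique} and the fact that leaves are separating planes that do not cross (\cref{prop:csdivide}), forces consecutive passes to be separated by definite ``layers'' of $cs$-leaves, bounding the number of passes. This is exactly the alternative you mention parenthetically at the end (``bound how many of the slabs can overlap''), but that is the whole content of the proposition and you have not supplied it; the route you actually develop does not close.
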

Here $U_1(J)$ consists of all points in $\tM$ at distance less than 1 from $J$.

Applying \cref{thm:branching}
to $f \inv$, there is also a branching foliation
tangent to $\Ecu$.  Let $\Fcu$ denote the lifted branching foliation on $\tM$.

\begin{prop} \label{prop:clines}
    If $\Lcs$ is a leaf of $\Fcs$ and $\Lcu$ is a leaf of $\Fcu$, then
    any connected component of the intersection $\Lcs \cap \Lcu$
    is a topological line which is properly embedded in $\tM$.
\end{prop}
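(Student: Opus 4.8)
The plan is to combine the stable-saturation of $cs$-leaves (\cref{prop:ssat}) with the dual statement for $cu$-leaves and the separation properties (\cref{prop:csdivide}), then argue that a connected component $I$ of $\Lcs \cap \Lcu$ carries a well-defined center direction and is properly embedded. First I would observe that along $I$, the tangent space must be contained in $T\Lcs \cap T\Lcu = \Ecs \cap \Ecu = \Ec$, so $I$ is an integral curve of the one-dimensional bundle $\Ec$; in particular $I$ is a $C^1$ immersed curve, hence locally a topological arc, and it has no branch points since at each point the tangent line is forced. The only topological possibilities for a connected $1$-manifold are a line or a circle, so the crux is to rule out that $I$ is a closed curve and to show proper embedding.

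Next I would rule out circles. Suppose $I$ were a topological circle inside the plane $\Lcs$. By \cref{prop:ssat}, $\Lcs$ is saturated by stable leaves, and by the analogue of \cref{prop:ssat} for $f\inv$, the leaf $\Lcu$ is saturated by unstable leaves; in particular $\Lcu \supset \Wu(x)$ for each $x \in I$. The unstable leaves through points of $I$ therefore all lie in $\Lcu$, and they also must lie in the same side of $\Lcs$ except where they meet $I$, by \cref{prop:uscunique} (each $cs$-leaf meets an unstable leaf in at most one point). If $I$ is a circle, it bounds a disk $D$ in the plane $\Lcs$; an unstable segment emanating from a point of $I$ into $D$ cannot leave $D$ through $I$ again, so it stays in $D$, contradicting that unstable leaves are complete noncompact curves growing without bound under forward iteration while $D$ has bounded diameter. (Alternatively, apply \cref{prop:volume}: a complete unstable leaf trapped in a compact region would force infinite volume in a bounded neighbourhood.) Hence $I$ is a topological line.

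Finally, for proper embedding in $\tM$: suppose $I$ accumulates on a point $z \in \tM$. Since $\Lcs$ and $\Lcu$ are closed (properly embedded, by \cref{prop:csdivide} and its $cu$-analogue), we get $z \in \Lcs \cap \Lcu$. Work in a foliation chart for the stable foliation near $z$: by \cref{prop:ssat}, $\Lcs$ is a union of stable plaques, so in suitable coordinates $\Lcs$ looks like a graph transverse to the stable direction; the center curve $I$, being tangent to $\Ec$ which is transverse to $\Es$, crosses each local stable plaque at most once — so it cannot accumulate on $z$ within $\Lcs$ unless $z \in I$. If $z \in I$, then near $z$ the curve $I$ is a single embedded arc (uniquely tangent to $\Ec$), so it cannot also accumulate on $z$ from far away without violating this local embeddedness together with \cref{prop:twogps}-type uniqueness of center–stable intersection inside the plane $\Lcs$: the stable leaf $\Ws(z) \subset \Lcs$ would then meet $I$ in more than one point, contradicting that center and stable curves in $\Lcs$ cross exactly once. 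Therefore $I$ has no accumulation point, i.e.\ it is properly embedded.

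The main obstacle I expect is the last step — converting ``$I$ is tangent to $\Ec$'' plus the global separation/saturation facts into genuine proper embeddedness, since the branching foliations can be nonunique and one must be careful that the local picture of $\Lcs$ as stably-saturated is enough to control how $I$ sits inside it; getting the uniqueness-of-intersection argument inside the (merely topological, possibly branching) plane $\Lcs$ to run cleanly is the delicate point, and it may require invoking the two-dimensional results of \cref{sec:dimtwo} applied to the dynamics restricted near the relevant torus rather than a soft topological argument.
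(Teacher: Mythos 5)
The paper does not prove this proposition internally; it cites \cite[Lemma 6.1]{HP2}. Your overall skeleton (the intersection is a transversal intersection of two properly embedded planes, hence a $1$-manifold tangent to $\Ec$; rule out circles; then show properness) is the right shape, but the step where you rule out circles contains a genuine error. The disk $D$ bounded by the circle $I$ lives \emph{inside the surface} $\Lcs$, i.e.\ it is a $2$-dimensional disk tangent to $\Ecs$, whereas unstable curves are transverse to $\Ecs$. An unstable segment emanating from a point of $I$ therefore does not go ``into $D$'' at all --- it immediately leaves $\Lcs$ (indeed by \cref{prop:uscunique} it meets $\Lcs$ only at that one point). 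So there is no trapped unstable leaf, no appeal to \cref{prop:volume}, and the contradiction you claim never materializes. The correct tool here is the one the paper itself uses in the proof of \cref{prop:cvolume}: inside the plane $\Lcs$, the stable foliation is a foliation by lines (\cref{prop:ssat}), the circle $I$ is transverse to it, and a Poincar\'e--Bendixson argument applied to the stable foliation restricted to $D$ produces a closed stable orbit or a singularity, which is impossible. (An analogous argument in $\Lcu$ with the unstable foliation would also work, but only because that foliation lives \emph{inside} $\Lcu$.)

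Your properness argument is essentially sound but leans on ``\cref{prop:twogps}-type uniqueness'' inside $\Lcs$, which is stated in the paper only for the $2$-torus dynamics of \cref{sec:dimtwo}, not for an arbitrary leaf of $\Fcs$ in $\tM$. The fact you actually need --- a curve tangent to $\Ec$ lying in a leaf of $\Fcs$ meets each stable leaf of that $cs$-leaf at most once --- is exactly the first half of the paper's proof of \cref{prop:cvolume}, again via Poincar\'e--Bendixson. Granting that, your accumulation argument closes: if $I$ accumulated on $z\in\Lcs\cap\Lcu$, infinitely many local components of $I$ in a stable foliation box of $\Lcs$ around $z$ would cross the stable plaque through $z$, forcing $I$ to meet a single stable leaf twice. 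Since $\Lcs$ is properly embedded in $\tM$ (\cref{prop:csdivide}), properness of $I$ in $\Lcs$ gives properness in $\tM$. So the fix for both of your weak points is the same single lemma; as written, however, the circle-elimination step does not stand.
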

See \cite[Lemma 6.1]{HP2} for a proof of \cref{prop:clines}.

\begin{prop} \label{prop:cvolume}
    There is a uniform constant $C>0$ such that if $J$ is a center segment
    lying inside a leaf of $\Fcs$,
    then $\volume U_1(J)  \ge  C \cdot \length(J)$.
\end{prop}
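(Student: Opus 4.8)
The plan is to mimic the proof of \cref{prop:volume} for unstable segments, with one structural change. For an unstable segment one produces the tube $U_1(J)$ by sweeping out the center-stable leaves through the points of $J$, which are pairwise distinct (by \cref{prop:uscunique}) and hence pairwise disjoint planes. Here all points of the center segment $J$ lie on a \emph{single} leaf $L$ of $\Fcs$, so instead I would thicken $J$ in two stages: first thicken $J$ \emph{inside} $L$ along stable leaves, obtaining a surface $\Sigma \subof L$, and then thicken $\Sigma$ transversally along unstable leaves, obtaining a set $\Om \subof U_1(J)$.

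In detail, fix a small uniform constant $\ep>0$, at most $\tfrac14$ and otherwise determined by the regularity constants below. For each $x$ on $J$ let $\Ws_\ep(x)$ be the stable segment of length $2\ep$ centered at $x$; by \cref{prop:ssat} this segment lies in $L$. Put $\Sigma=\bigcup_{x\in J}\Ws_\ep(x)\subof L$ and $\Om=\bigcup_{y\in\Sigma}\Wu_\ep(y)$, where $\Wu_\ep(y)$ is the unstable segment of length $2\ep$ centered at $y$. Every point of $\Om$ is reached from a point of $J$ by a stable segment of length $<\ep$ followed by an unstable segment of length $<\ep$, so $\Om\subof U_1(J)$. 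I would then bound $\volume\Om$ from below by showing that the map $\Phi\colon J\ti(-\ep,\ep)\ti(-\ep,\ep)\to\Om$ sending $(x,s,r)$ to the point at signed unstable-arclength $r$ from the point at signed stable-arclength $s$ from $x$ is injective with Jacobian bounded below by a uniform constant. The Jacobian bound is routine: it comes from the uniform transversality of the three bundles $\Es$, $\Ec$, $\Eu$ together with the uniform $C^1$ regularity of $\Ws$ and $\Wu$, and it gives $\volume\Om\ge C\cdot\length(J)$ with $C$ depending only on these constants.

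Injectivity of $\Phi$ rests on two facts. First, by \cref{prop:uscunique} the leaf $L$ meets each unstable leaf in at most one point, so from $q=\Phi(x,s,r)$ one recovers the point $y=\Wu(q)\cap L\in\Sigma$ and then the arclength $r$. Second, distinct points of $J$ lie on distinct leaves of the stable foliation of $L$ --- equivalently, $J$ crosses each stable leaf of $L$ at most once --- so from $y\in\Sigma$ one recovers the unique $x\in J$ with $y\in\Ws_\ep(x)$, and then $s$. This second fact is the crux, and the leafwise analogue of \cref{prop:twogps}. To prove it, suppose $x_1\ne x_2$ are points of $J$ on a common stable leaf $\ell\subof L$, chosen consecutive along $J$, so that the subarc $J'$ of $J$ between them is disjoint from $\ell$. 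Then $J'$ together with the subsegment of $\ell$ from $x_1$ to $x_2$ is a Jordan curve in the plane $L$ (\cref{prop:csdivide}) bounding a disk $D\subof L$. On $D$ the bundle $\Ec$ restricts to a continuous nonvanishing line field, tangent to $\del D$ along $J'$ and transverse to $\del D$ along the stable arc; a Poincar\'e--Bendixson / index argument then forces a singularity or a closed orbit of this line field inside $D$, and in the plane $L$ either alternative contradicts the line field being nonvanishing.

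The step I expect to be the main obstacle is precisely this last claim, together with the mild nuisance that $\Ec$ is only continuous, so that the relevant leafwise flow need not be uniquely defined; the latter is handled either by a smoothing of the line field on $D$, or by phrasing the obstruction purely in terms of the index of the continuous line field along the Jordan curve $\del D$, for which no flow is needed. I note that for a center segment which happens to lie inside a leaf of $\Fcu$ as well, the crux is immediate from the $f\inv$-version of \cref{prop:uscunique} (each leaf of $\Fcu$ meets a stable leaf in at most one point); this is the case that occurs when $J$ is a component of an intersection $\Lcs\cap\Lcu$ as in \cref{prop:clines}.
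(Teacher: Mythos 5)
Your proposal is correct and follows essentially the same route as the paper's proof: the crux in both is showing that $J$ meets each stable leaf of $L$ at most once via the Jordan curve theorem in the plane $L$ plus a Poincar\'e--Bendixson/index argument, after which \cref{prop:uscunique} gives disjointness of the local product boxes and the volume bound follows by adapting the tube argument behind \cref{prop:volume}. The only (cosmetic) difference is that you run the index argument on the merely continuous line field $\Ec$, whereas the paper applies Poincar\'e--Bendixson to the flow along $\Es$, which is uniquely integrable and so avoids the smoothing issue you flag.
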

\begin{proof}
    Suppose $J$ is a compact curve tangent to $\Ec$ lying in a leaf $L$ of $\Fcs$.
    Further suppose $J$ intersects a stable leaf in two distinct points.
    Then there is a circle which is the concatenation
    of a center segment and a stable segment, and 
    as $L$ is diffeomorphic to $\bbR^2$,
    the Jordon curve theorem implies that this circle bounds a disk, $D$.
    We may assume that $\Es$ is oriented so that everywhere along
    the boundary of $D$, $\Es$ is either tangent to the boundary or
    transerve to the boundary and pointing into $D$.
    Then,
    as in the proof of the Poincar\'e--Bendixson theorem,
    one can show that the flow along the stable direction either has a closed
    orbit or a fixed point in $D$.
    Since the stable foliation only consists of lines, 
    this gives a contradiction.

    Hence, $J \cap \Ws(x) = \{x\}$ for all $x  \in  J$.
    By \cref{prop:uscunique},
    \[
        J \cap \bigcup_{y  \in  \Ws(x)} \Wu(y) = \{x\}
    \]
    for all $x  \in  J$ as well.
    Using this,
    one may adapt the proof of \cref{prop:volume}
    given in \cite{BBI2}
    to apply to center segments.
\end{proof}
\section{Regions between tori} \label{sec:defOm} %

From now on,
assume that $f:M \to M$ is a partially hyperbolic
diffeomorphism on a closed oriented 3-manifold
and that there is at least one $cs$ or $cu$-torus.
We first consider dynamics on the closed manifold $M$,
but later in this section we lift to the universal cover.

\begin{prop} \label{prop:cudisjoint}
    No $cs$-torus intersects a $cu$-torus.
\end{prop}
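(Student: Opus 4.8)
The plan is to argue by contradiction, supposing that a $cs$-torus $T^{cs}$ and a $cu$-torus $T^{cu}$ intersect. The first step is to understand what this intersection looks like locally. At a point $x \in T^{cs} \cap T^{cu}$, the surface $T^{cs}$ is tangent to $\Ecs(x) = \Ec(x) \oplus \Es(x)$ and $T^{cu}$ is tangent to $\Ecu(x) = \Ec(x) \oplus \Eu(x)$. These two planes in $T_xM$ meet exactly along the line $\Ec(x)$ (since $\Es$, $\Ec$, $\Eu$ are in direct sum). Hence the two surfaces are transverse along the intersection, and any connected component $\gamma$ of $T^{cs} \cap T^{cu}$ is a $1$-manifold everywhere tangent to $\Ec$. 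Since $T^{cs}$ and $T^{cu}$ are compact, $\gamma$ is either a compact curve tangent to $\Ec$ or a circle tangent to $\Ec$; in the latter case, by continuing the transverse intersection, $\gamma$ is a closed curve. (A more careful point: one should note that $T^{cs}$ and $T^{cu}$ need not be transverse at isolated points where they are tangent to higher order, but tangency of the two $2$-planes forces them to agree, which is impossible since $\Es(x) \ne \Eu(x)$; so the intersection is genuinely a transverse $1$-submanifold.)

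The second step is to get a contradiction from the existence of such a center curve sitting inside a $cs$-torus. A compact leaf $T^{cs}$ tangent to $\Ecs$ is, by \cref{prop:ssat} applied on the universal cover (and the corresponding statement on $M$), saturated by stable leaves; likewise $T^{cu}$ is saturated by unstable leaves. So through each $x \in \gamma$, the whole stable leaf $\Ws(x)$ lies in $T^{cs}$ and the whole unstable leaf $\Wu(x)$ lies in $T^{cu}$. Now I would run the Poincar\'e--Bendixson argument already used in the proof of \cref{prop:cvolume}: work inside the $cs$-torus $T^{cs}$, which carries a weakly partially hyperbolic splitting $\Ec \oplus \Es$ (this is precisely the setting of \cref{sec:dimtwo}), and observe that the center curve $\gamma \subset T^{cs}$, together with the stable foliation restricted to $T^{cs}$, constrains the dynamics. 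If $\gamma$ meets some stable leaf of $T^{cs}$ in two distinct points, concatenating a center arc and a stable arc produces a circle bounding a disk in the universal cover of $T^{cs}$, and the Poincar\'e--Bendixson argument forces a closed stable orbit or a stable fixed point inside that disk, contradicting the fact that stable leaves are lines. Hence $\gamma \cap \Ws(x) = \{x\}$ for each $x \in \gamma$. The same reasoning inside $T^{cu}$ gives $\gamma \cap \Wu(x) = \{x\}$ for each $x \in \gamma$.

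The third step closes the argument. Lift everything to the universal cover $\tM$; the curve $\gamma$ lifts to a curve $\tilde\gamma$ tangent to $\Ec$ lying in a leaf $\Lcs \in \Fcs$ and in a leaf $\Lcu \in \Fcu$, i.e.\ $\tilde\gamma$ is contained in a connected component of $\Lcs \cap \Lcu$, which by \cref{prop:clines} is a properly embedded topological line in $\tM$. But $\gamma$ is the projection of this line and lies in the compact torus $T^{cs}$: so either $\gamma$ is a non-compact curve whose lift is an embedded line meeting each stable leaf of $\Lcs$ at most once (using the previous step together with \cref{prop:uscunique}), yet is confined to a compact set — and then one derives, as in \cref{prop:cvolume}, that $U_1$ of arbitrarily long sub-segments has unbounded volume while staying in a bounded region of $\tM$, a contradiction; or $\gamma$ is a closed center curve, and then its lift is a compact arc $\tilde\gamma$ whose endpoints project to the same point, so some nontrivial deck transformation $\gamma_0$ fixes a leaf of $\Fcs$ setwise and translates along it, which (again via the volume estimate of \cref{prop:volume}, \cref{prop:cvolume}, now applied to the $\gamma_0$-periodic center line) is impossible. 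Either way we reach a contradiction, so no $cs$-torus intersects a $cu$-torus.

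The main obstacle I anticipate is the careful bookkeeping in Step 2 and Step 3: one must verify that the Poincar\'e--Bendixson-style argument inside a torus with an $\Ec \oplus \Es$ splitting genuinely applies (orientability of the bundles, which is arranged by passing to a finite cover or using the hypothesis that $f$ preserves orientations as in \cref{cor:branching}), and that the curve $\gamma$ cannot be a ``short'' closed center curve that escapes the volume estimates. The cleanest route is probably to reduce entirely to the volume-growth contradiction of \cref{prop:volume}/\cref{prop:cvolume}, since a center curve inside $T^{cs}$ that is also inside $T^{cu}$ has both its stable-leaf intersections and its unstable-leaf intersections trivial, which is exactly the hypothesis those propositions need in order to conclude that a long center segment has a large unit-neighbourhood volume — and there simply is not enough volume available inside a small neighbourhood of a compact torus.
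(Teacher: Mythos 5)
Your first step agrees with the paper's: since $\Ecs$ and $\Ecu$ meet exactly along $\Ec$, the two tori are everywhere transverse, so their intersection is a compact $1$-manifold without boundary tangent to $\Ec$, i.e.\ a finite union of center circles. Your hedging between ``a compact curve'', ``a circle'', and (in Step 3) ``a non-compact curve confined to a compact set'' is unnecessary: a connected component of a transverse intersection of two compact embedded surfaces is automatically a circle, so that is the only case to rule out.

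The genuine gap is in how you rule out the center circle. Step 3 rests on the claim that a circle $\gamma$ tangent to $\Ec$ contradicts the volume estimates of propositions \ref{prop:volume} and \ref{prop:cvolume} because ``there is not enough volume available near a compact torus.'' But those estimates live in the universal cover $\tM$, where the lift $\tilde\gamma$ of a circle is an \emph{unbounded} line invariant under a deck transformation $\gamma_0$. A subsegment $J \subset \tilde\gamma$ of length $L$ is covered by roughly $L/\length(\gamma)$ isometric translates $\gamma_0^k(J_0)$ of a fundamental-domain arc, so $\volume U_1(J)$ itself grows linearly in $L$; the lower bound $\volume U_1(J) \ge C\,\length(J)$ is satisfied and yields no contradiction. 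Working directly in $M$ is no better: $\gamma$ has finite length, and its forward iterates need not grow since $\Ec$ is not uniformly expanded. Your Poincar\'e--Bendixson step, while correct, only shows that $\tilde\gamma$ meets each stable (resp.\ unstable) leaf at most once; it does not preclude a closed center curve. The paper's actual argument is one line and different: the circle lies in the $cs$-torus, where \cref{prop:uniqueinttwo} places it inside a leaf of $\Wc_g$ for the two-dimensional system $g$ on that torus; lifting to $\bbR^2$, its endpoints are $\tx$ and $\tx+z$ with $0 \ne z \in \bbZ^2$, and \cref{prop:ctou} together with $H(\tx+z)=H(\tx)+z$ forces $z$ to lie on the unstable eigenline of the hyperbolic matrix $A$, which contains no nonzero integer vector. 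You should replace Steps 2--3 with this (or an equivalent) argument; as written, the volume-growth route does not close.
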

\begin{proof}
    As $\Ecs$ and $\Ecu$ are transverse,
    such an intersection would consist of circles tangent to $\Ec$.
    A center circle is ruled out by \cref{prop:ctou}.
\end{proof}
\begin{prop} \label{prop:csdisjoint}
    No two distinct $cs$-tori intersect.
\end{prop}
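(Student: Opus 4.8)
\emph{Plan.} The plan is to argue by contradiction: suppose $T_1$ and $T_2$ are distinct $cs$-tori with a common point $p$, and deduce $T_1 = T_2$. The first ingredient is that every $cs$-torus is saturated by stable leaves: if $x$ lies on a $cs$-torus $T$, then $\Ws(x) \subseteq T$. This holds because $\Es$ is uniquely integrable with integral foliation $\Ws$, and $T$ is an integral surface of $\Ecs \supseteq \Es$; so the set of points of $\Ws(x)$ whose subcurve from $x$ stays in $T$ is non-empty, open (local uniqueness of integral curves of the line field $\Es$, together with $T$ being an $\Ecs$-surface), and closed ($T$ compact), hence all of $\Ws(x)$. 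Consequently $T_1 \cap T_2$ is a non-empty, closed, $\Ws$-saturated subset of the connected surface $T_1$. If we can show it is also \emph{open} in $T_1$, it must equal $T_1$, forcing $T_1 \subseteq T_2$ and hence $T_1 = T_2$ (two connected closed surfaces of the same dimension, one contained in the other, must coincide) — the desired contradiction. So the whole problem reduces to proving openness of $T_1 \cap T_2$ in $T_1$.

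To get openness I would pass to the universal cover $\tM$ and lift $T_1, T_2$ to properly embedded planes $\tilde T_1, \tilde T_2$ through a common lift $\tilde p$ of $p$ — using either that $cs$-tori are incompressible (\cite{rhrhu2011tori}) or that they are leaves of $\Fcs$, in which case \cref{prop:csdivide} applies directly — and work near $\tilde p$. Choose a leaf $\tilde L \in \Fcu$ through $\tilde p$. Since $\Ecs$ and $\Ecu$ are transverse with $\Ecs \cap \Ecu = \Ec$, the connected components $c_i$ of $\tilde L \cap \tilde T_i$ through $\tilde p$ are curves tangent to $\Ec$ through $\tilde p$ (topological lines, by \cref{prop:clines}). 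I claim $c_1$ and $c_2$ coincide on a neighbourhood of $\tilde p$: morally, inside the single $cu$-leaf $\tilde L$ there is essentially one center curve through $\tilde p$ meeting surfaces tangent to $\Ecs$. Granting the claim, a short subarc $c' \ni \tilde p$ lies in $\tilde T_1 \cap \tilde T_2$; every point of $c'$ lies on $T_2$, so by stable saturation its stable leaf lies in $T_2$, and since a neighbourhood of $p$ in $T_1$ is the union of the local stable plaques of $T_1$ through the points of $c'$ (here $c'$ is a transversal to the foliation $\Ws|_{T_1}$, being tangent to $\Ec$), that whole neighbourhood lies in $T_2$. Hence $T_1 \cap T_2$ is open in $T_1$.

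The main obstacle is exactly the coincidence of $c_1$ and $c_2$ near $\tilde p$, since the branching foliations may \emph{a priori} have several distinct leaves — hence several distinct center curves — through one point. I would resolve this using that $\tilde T_1$ and $\tilde T_2$ are leaves of $\Fcs$ (a structural fact about compact surfaces tangent to $\Ecs$), the no-topological-crossing property of branching foliations, and \cref{prop:uscunique} (each leaf of $\Fcs$ meets an unstable leaf in at most one point): these pin down the center curves through $\tilde p$ inside the $cu$-leaf $\tilde L$ and force $c_1 = c_2$ locally. An alternative route to the same end, which may in fact be shorter, is to argue directly that a compact leaf of $\Fcs$ cannot branch — any leaf agreeing with a compact torus leaf on a non-empty proper subsurface would be forced either to cross it or to fail completeness — so the $\Fcs$-leaf through $p$ is unique and equals both $T_1$ and $T_2$. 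Either way, the remaining steps (stable saturation and the local product structure of $T_i$) are routine.
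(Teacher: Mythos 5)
There is a genuine gap, and it sits exactly where the paper warns it would. The paper does not prove this proposition at all: it explicitly notes that because $\Ecs$ is not uniquely integrable, it is a priori possible for two $cs$-tori to intersect without coinciding, and it defers the proof to \cite{ham20XXprox}, remarking that that proof is ``much more involved'' than the easy transversality argument used for \cref{prop:cudisjoint}. Your reduction (stable saturation of a $cs$-torus, then closed-and-open in the connected surface $T_1$, hence $T_1=T_2$) is fine as far as it goes, but the entire difficulty is the openness step, i.e.\ showing that $T_1$ and $T_2$ locally coincide near a common point $p$ --- and that is precisely the step you do not actually carry out.

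Concretely: your claim that the center curves $c_1$ and $c_2$ through $\tilde p$ inside a single $cu$-leaf must coincide locally is unsubstantiated, and the tools you invoke do not force it. The no-topological-crossing property of branching foliations permits two distinct leaves through the same point to touch and then separate without crossing; \cref{prop:uscunique} only says that a \emph{single} leaf of $\Fcs$ meets an unstable leaf at most once, and says nothing about two \emph{different} leaves through $\tilde p$. This is not a technicality: the whole reason branching foliations are needed in this subject is that several genuinely distinct $\Ecs$-surfaces can pass through one point, as the example of \cite{rhrhu2016nondyn} shows. Your ``alternative route'' (a compact leaf of $\Fcs$ cannot branch, because a leaf agreeing with it on a proper subsurface would cross it or fail completeness) has the same problem: two leaves through $p$ need not agree on any $2$-dimensional subsurface --- they could meet only along a stable curve, or along a more complicated closed set --- so neither the crossing argument nor the completeness argument applies as stated. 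You also assume without justification that both tori are leaves of the \emph{same} branching foliation $\Fcs$, which is itself a nontrivial structural fact since the branching foliation is neither unique nor canonically determined by the tori. To repair the argument you would need the substance of \cite{ham20XXprox}, which analyzes the possible intersection patterns of $cs$-tori using the dynamics (not just the local foliation structure) to rule out a nontrivial intersection.
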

Note that since $\Ecs$ is not uniquely integrable,
it is a priori possible for two $cs$-tori to intersect without coinciding.
This possibility is explored in \cite{ham20XXprox} where a proof
of \cref{prop:csdisjoint} is given.
The proof relies heavily on branching foliations and other tools specific
to dimension three, and is much more involved than the simple proof
of \cref{prop:cudisjoint} above.

Let $\cT \subset M$ be the union of all $cs$ and $cu$-tori.
By the above propositions,
$\cT$ consists of disjoint tori
and \cite{ham20XXprox} shows that
there are only finitely many.
Let $\{M_i\}$ be the collection of compact manifolds with boundary
obtained by cutting $M$ along $\cT$.
Since $f(\cT)=\cT$, there is an iterate of $f^k$ which maps each $M_i$
to itself and each torus in $\cT$ to itself.
For simplicity, we replace $f$ by an iterate and assume $k = 1$.
Throughout the proof of \cref{thm:main},
we freely replace $f$ by an iterate when convenient.

\begin{prop} \label{prop:Midiff}
    Each $M_i$ is diffeomorphic to $\bbT^2 \times [0,1]$.
\end{prop}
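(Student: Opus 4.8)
The plan is to show that each $M_i$ is a closed $3$-manifold-with-boundary whose boundary consists of two incompressible tori and which admits a codimension-one foliation without Reeb components, and then invoke the classification of such manifolds. First I would record the basic topological features of $M_i$. Its boundary $\del M_i$ is a union of tori drawn from $\cT$; I claim each boundary torus is incompressible in $M_i$. Indeed, a boundary torus $T$ is tangent to $\Ecs$ or $\Ecu$, hence is a $cs$- or $cu$-torus, and by \cref{prop:ctou} (lifted suitably, exactly as in the proof of \cref{prop:cudisjoint}) it cannot contain a center circle; combined with the fact that the restricted dynamics on $T$ is a weakly partially hyperbolic diffeomorphism of the kind analyzed in \cref{sec:dimtwo} — in particular, by \cref{prop:hyptwo} it is at finite distance from a hyperbolic toral automorphism — the induced map on $\pi_1(T) = \bbZ^2$ is a hyperbolic (Anosov) element. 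A compressible boundary torus in an irreducible $3$-manifold bounds a solid torus, and one checks that a hyperbolic action on $\bbZ^2$ is incompatible with a torus bounding a solid torus on the $M_i$ side (the core curve would have to be fixed up to iterate). So each component of $\del M_i$ is incompressible.

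Next I would produce a taut foliation on $M_i$. Restrict the center-stable branching foliation $\Fcs_0$ of \cref{thm:branching} (together with the stable foliation $\Ws$, using \cref{prop:ssat}) to the closed region $M_i$. Boundary tori tangent to $\Ecu$ are transverse to $\Ecs$, so the intersections with leaves of $\Fcs_0$ cut out a genuine (nonbranching, by \cref{prop:uscunique} applied along unstable directions, or more directly by working on a small collar) one-dimensional foliation on those boundary tori; boundary tori tangent to $\Ecs$ are themselves leaves. After resolving the branching locus — a standard procedure that turns a branching foliation with no topological crossings into an honest foliation on the manifold, available here because on $M_i$ the leaves are properly embedded planes or annuli covering the boundary tori — one obtains a genuine codimension-one foliation $\cF$ on $M_i$ tangent to a plane field close to $\Ecs$, with the $cs$-boundary tori as leaves and transverse to the $cu$-boundary tori. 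This foliation has no Reeb components: a Reeb component would give a torus leaf bounding a solid torus, contradicting incompressibility, or an invariant transverse contraction incompatible with \cref{prop:csdivide} (every leaf of $\Fcs$ separates the universal cover into two half-spaces, ruling out the half-plane leaves of a Reeb component). Hence $\cF$ is taut.

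A compact orientable irreducible $3$-manifold with nonempty boundary a union of incompressible tori that carries a taut foliation, and whose fundamental group is "small", is essentially forced to be $\bbT^2 \times [0,1]$ once we know more. Here is where the dynamical input is essential: I would use item (2) of \cref{thm:main}, i.e.\ the theorem of \HHU{} \cite{rhrhu2011tori} on which $3$-manifolds carry tori with hyperbolic (Anosov) dynamics, to constrain $M$ globally — but since we are only asked for the local statement about $M_i$, the cleaner route is to argue directly that $M_i$ fibers over $\bbS$ or is an interval bundle. Sutured-manifold / Gabai theory for taut foliations shows $M_i$ has vanishing Thurston norm in the relevant classes, and together with $\del M_i$ being two tori with the Anosov-type boundary behavior of $\cF$, the only possibility for an aspherical (leaves are planes, covered by \cref{prop:csdivide}) such manifold is $\bbT^2\times[0,1]$. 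Concretely: $M_i$ is irreducible and has infinite $\pi_1$; it has a taut foliation so $\pi_1(M_i)$ has no torsion and $M_i$ is aspherical; $\del M_i$ incompressible forces $\pi_1(\del M_i)\hookrightarrow \pi_1(M_i)$; and the transverse structure of $\cF$ on the $cu$-boundary tori matches the suspension of a hyperbolic toral automorphism only when $M_i \cong \bbT^2\times[0,1]$.

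The main obstacle is exactly this last step — going from "carries a taut foliation with controlled boundary behavior" to the precise homeomorphism type $\bbT^2\times[0,1]$. The delicate point is that the $cs$- and $cu$-tori may carry non-Anosov $2$-dimensional dynamics (sinks and sources are possible, as the introduction notes), so the foliation $\cF$ need not meet every boundary torus in a nice linear foliation, and one must be careful that the branching-resolution does not create spurious compact leaves or Reeb-like pieces. I expect to handle this by passing to the universal cover $\tM$, using \cref{prop:csdivide} and \cref{prop:clines} to understand the leaf space of $\Fcs$ restricted to a lift of $M_i$ as a (possibly non-Hausdorff) one-manifold, showing it is in fact $\bbR$ with a proper $\bbZ^2$-action by deck transformations making the quotient $\bbT^2\times[0,1]$, and quoting the appendix result on branching foliations of $\bbT^2\times[0,1]$ that the paper announces it will prove. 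This reduces \cref{prop:Midiff} to that appendix statement plus the two-dimensional analysis of \cref{sec:dimtwo}.
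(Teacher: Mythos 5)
There is a genuine gap, and it sits exactly where you flag "the main obstacle": the passage from "compact, orientable, irreducible, aspherical, boundary a union of incompressible tori, carries a taut foliation" to "$M_i \cong \bbT^2 \times [0,1]$" is simply false as a statement about $3$-manifolds. Hyperbolic knot and link complements satisfy every one of those hypotheses (they are irreducible, aspherical, have incompressible torus boundary, and carry taut foliations by Gabai), and they are not interval bundles over the torus. The appeal to the Thurston norm and to the "transverse structure of $\cF$ on the $cu$-boundary tori" does not rescue this; no actual argument is given there. Your fallback — pass to the universal cover and quote the appendix result on branching foliations of $\bbT^2 \times [0,1]$ — is circular, because the appendix (\cref{prop:linfoln}) is proved for $\Om = \bbR^2 \times I$, i.e.\ it already presupposes the conclusion of \cref{prop:Midiff}. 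The entire taut-foliation construction (resolving the branching, excluding Reeb components) is also unnecessary for this proposition and introduces delicate issues of its own near boundary tori carrying non-Anosov dynamics, as you note.

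The missing idea is that the conclusion is forced by the \emph{dynamics}, not by the foliation: an iterate of $f$ maps $M_i$ to itself and restricts on a boundary torus $T$ to a map homotopic to a hyperbolic toral automorphism (\cref{prop:hyptwo}). A compact irreducible $3$-manifold with incompressible torus boundary admitting a self-diffeomorphism that acts hyperbolically on $\pi_1(T)$ must be $\bbT^2 \times [0,1]$ — this is precisely Theorem~1.2 of Rodriguez Hertz--Rodriguez Hertz--Ures \cite{rhrhu2011tori}, and the paper's proof consists of verifying its hypotheses: irreducibility of $M$ (hence of $M_i$) because $M$ supports a partially hyperbolic diffeomorphism, the hyperbolic homotopy class of $f|_T$ from \cref{prop:hyptwo}, and incompressibility of $T$ from Theorem~2.2 of the same reference. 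Your incompressibility sketch is in the right spirit (a compressible torus in an irreducible manifold bounds a solid torus or lies in a ball, and a hyperbolic action on $\pi_1$ is incompatible with either), but you explicitly set aside the one citation that closes the argument in favour of a route that does not.
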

\begin{proof}
    This is basically a restatement of the main result of
    \cite{rhrhu2011tori}.
    Since $M$ supports a partially hyperbolic diffeomorphism,
    it is irreducible.
    Hence, $M_i$ is irreducible.
    Let $T$ be a boundary component of $M_i$.
    \Cref{prop:hyptwo} implies that $f|_T$ is homotopic
    to a hyperbolic toral automorphism
    and \cite[Theorem 2.2]{rhrhu2011tori} implies that $T$
    is incompressible.
    As such, $f|_{M_i}$ and $T$ satisfy the conditions of
    \cite[Theorem 1.2]{rhrhu2011tori},
    which implies that $M_i$ is diffeomorphic to $\bbT^2 \times [0,1]$.
\end{proof}

We have now established items (1)--(3) of \cref{thm:main}.  The rest of the paper
focuses on proving item (4).
To do this, we lift to the universal cover $\tM$.
Let $\Om \subof \tM$ be a closed 3-dimensional submanifold with boundary
such that each boundary component of $\Om$ quotients down to
a $cs$ or $cu$-torus in $M$ and such that
no surface which intersects the interior of $\Om$
quotients down to a $cs$ or $cu$-torus.
This submanifold may then be thought of as a covering space
for one of the $M_i$.
\Cref{prop:Midiff} implies that $\Om$ is diffeomorphic to $\bbR^2 \times I$
where $I \subset \bbR$ is a compact interval.

It will at times
be convenient to use coordinates on $\Om$ and discuss
linear maps from $\Om$ to $\bbR$.
Therefore, we simply assume that $\Om$ is equal to $\bbR^2 \times I$.
That is, we treat $\bbR^2 \times I$ as a subset of $\tM$
denoted by $\Om$.
The Riemannian metric on $\tM$ inherited from $M$
may differ from the standard Euclidean metric on $\bbR^2 \times I$.
However, distances and volumes measured
with respect to the two metrics differ by at most a constant factor.
Therefore, in our analysis, we freely assume that $\Om = \bbR^2 \times I$
is equipped with the Euclidean metric.

Since $\bbZ^2$ acts on $\Om$ via deck transformations, we adopt
the following notation:{}
if $p=(v,s) \in \bbR^2 \times I$ and $z \in \bbZ^2$,
then $p + z = (v,s) + z = (v+z, s)$.

As we are assuming $f:M \to M$ maps each $M_i$ to itself,
it follows that there is a lift of $f$ to the universal cover
which leaves $\Om$ invariant.
We also denote this lifted map $\tM \to \tM$ by the letter $f$.
Since $f|_\Om$ quotients down to a map on $M_i$,
there is a linear map
$A:\bbR^2 \to \bbR^2$ such that if $p \in \bbR^2 \times I$
and $z \in \bbZ^2$,
then $f(p + z)=f(p) + A z$.
By \cref{prop:hyptwo}, $A$ is hyperbolic.
This implies that there is a semiconjugacy between $f|_\Om$ and $A$
\cite{Franks1}.
We list several properties of this semiconjugacy.

\begin{prop} \label{prop:semiconj}
    There is a unique continuous surjective map
    $H:\bbR^2 \times I \to \bbR^2$
    and a constant $C > 0$
    such that if $p = (v,s) \in \bbR^2 \times I$ and $z \in \bbZ^2$
    then
    \begin{enumerate}
        \item $H f(p) = A H(p)$,

        \item $H(p+z) = H(p) + z$, and

        \item $\|H(p) - v\| < C$.
      \end{enumerate}  \end{prop}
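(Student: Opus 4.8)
The plan is to construct $H$ via Franks' classical method, which builds the semiconjugacy as a uniform limit of a telescoping average of iterates. First I would pass to the universal cover $\tM$ where $\Om = \bbR^2 \times I$ sits as an invariant submanifold, with $f$ acting on $\Om$ and satisfying $f(p+z) = f(p) + Az$ for $z \in \bbZ^2$. Define the ``displacement'' $u(p) = f(p) - A\,\widehat{p}$, where $\widehat{p} \in \bbR^2$ denotes the first coordinate of $p \in \bbR^2 \times I$; the cocycle relation above shows $u$ is $\bbZ^2$-periodic, hence bounded (since $\Om / \bbZ^2$ is compact, being a cover of the compact $M_i$). Then the candidate is
\[
    H(p) = \widehat{p} + \sum_{n=0}^{\infty} A^{-n-1}\, u\big(f^n(p)\big),
\]
the series converging uniformly because $\|u\|$ is bounded and $A^{-1}$ contracts the unstable direction while $A$ contracts the stable direction — so one splits $u$ along the two eigendirections of $A$ and sums each part geometrically in the appropriate time direction (forward for the unstable component, rewriting using $A^{+n} u(f^{-n}(p))$ for the stable component). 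This gives a continuous $H$ with $\|H(p) - \widehat{p}\|$ bounded by a constant depending only on $\|u\|$ and the spectral gap of $A$, which is item (3), and the $\bbZ^2$-equivariance $H(p+z) = H(p) + z$ in item (2) follows directly from periodicity of $u$ together with $A z \in \bbZ^2$.

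Next I would verify the conjugacy relation (1). This is the standard computation: plugging $f(p)$ into the defining series and comparing with $A H(p)$, the sum telescopes so that $H(f(p)) = A H(p) - A u(p)/\text{(something)} \ldots$ — more precisely, $A H(p) = A\widehat{p} + \sum_{n\ge 0} A^{-n} u(f^n(p)) = \big(f(p) - u(p)\big) + u(p) + \sum_{n \ge 1} A^{-n} u(f^n(p)) = \widehat{f(p)} + \sum_{m \ge 0} A^{-m-1} u(f^{m+1}(p)) = H(f(p))$, using $\widehat{f(p)} = \widehat{f(p)}$ and reindexing. Surjectivity of $H$ then follows from the fact that $H$ is a proper map (being a bounded distance from the projection $p \mapsto \widehat{p}$, which is proper and surjective on $\bbR^2 \times I \to \bbR^2$) and has degree one, a standard topological argument: a proper continuous self-map-like object at bounded distance from a surjection is itself surjective.

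For uniqueness, suppose $H_1, H_2$ both satisfy (1)--(3). Then $g := H_1 - H_2$ is $\bbZ^2$-periodic (by (2)), hence bounded (by (3) as well), and satisfies $g(f(p)) = A g(p)$, so $g(p) = A^{-n} g(f^n(p))$ for all $n$; projecting onto the unstable eigendirection of $A$ and letting $n \to \infty$ kills that component since $A^{-n}$ contracts it and $g$ is bounded, while projecting onto the stable direction and using $g(p) = A^n g(f^{-n}(p))$ with $n \to \infty$ kills the other. Hence $g \equiv 0$.

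The main obstacle I anticipate is not any single estimate — the Franks construction is robust — but rather bookkeeping the fact that $f$ is only defined on $\Om$ rather than on all of $\tM$, and that $\Om$ is not $\bbZ^2$-cocompact in the naive sense but only after quotienting the $\bbR^2$ factor. One must be careful that $f$ genuinely preserves $\Om$ (stated in the excerpt) so that $f^n(p)$ stays in the domain for all $n \in \bbZ$, and that the relevant quotient $\Om/\bbZ^2 \cong M_i$ is compact so that $u$ is bounded. Once the domain issues are handled, everything reduces to the one-dimensional contraction estimates along the eigendirections of the hyperbolic matrix $A$, exactly as in \cite{Franks1}; I would cite that reference for the core argument and only spell out the adaptations forced by working on $\bbR^2 \times I$ rather than $\bbR^2$.
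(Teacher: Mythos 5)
Your proposal is correct and coincides with the paper's approach: the paper gives no proof of \cref{prop:semiconj}, simply invoking Franks \cite{Franks1}, and your argument is precisely the standard Franks construction (periodic bounded displacement $u$, geometric series split along the stable and unstable eigendirections of $A$, telescoping for the conjugacy relation, degree/properness for surjectivity, and contraction for uniqueness), together with the correct observation that the only adaptation needed is that $f$ preserves $\Om$ and $\Om/\bbZ^2 \cong M_i$ is compact so that $u$ is bounded. The only cosmetic caveat is that your displayed series converges only for the unstable component of $u$, so the actual definition of $H$ is the pair of componentwise sums you describe afterwards rather than that single formula.
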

As in \cref{sec:dimtwo}, 
let $\piu:\bbR^2 \to \bbR$ be a linear map such that $\ker \piu$
is the stable leaf of $A$ which passes through the origin.
Define $\Hu = \piu \circ H$.

Now consider a branching foliation $\Fcs$ on $\tM$ as in \cref{cor:branching}.
We only know a priori that $\Fcs$ is invariant under those
deck transformations which preserve the orientations of the subbundles.
In particular,
for a full-rank subgroup $Z_0 \subof \bbZ^2$ it holds that
for any $z  \in  Z_0$, there is a deck transformation $\gam_z$ : $\tM  \to  \tM$
which preserves the orientations of the subbundles and
such that
\begin{math}
    \gam_z(p) = p+z
\end{math}
for all $p  \in  \bbR^2 \ti I = \Om$.
Therefore if $L$ is a leaf of $\Fcs$, then $\gam_z(L)$ is a leaf of $\Fcs$
as well.
Replacing $f$ by an iterate,
we may freely assume that
$f$ preserves the orientations of the subbundles on $\tM$.
Then the branching foliation $\Fcs$ is invariant under $f$.

Let $\Omin$ denote the interior of $\Om$.
A major step is to relate the branching foliation $\Fcs$
to the semiconjugacy $H$ for points in $\Omin$.

\begin{prop} \label{prop:csH}
    For $p,q \in \Omin$,
    $\Hu(p) = \Hu(q)$
    if and only if
    there is $L \in \Fcs$ such that $p,q \in L$.
\end{prop}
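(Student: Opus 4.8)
The plan is to prove the two implications separately, in both cases transferring information between the branching foliation $\Fcs$ and the semiconjugacy $H$ by using the stable saturation of $\Fcs$-leaves together with the two-dimensional picture from \cref{sec:dimtwo}. For the ``if'' direction, suppose $p,q$ lie in a common leaf $L \in \Fcs$. By \cref{prop:clines}, the intersection of $L$ with a leaf of $\Fcu$ through $p$ is a properly embedded center line, and by \cref{prop:ssat} the leaf $L$ is a union of stable leaves; so, as in the two-dimensional setting, any two points of $L$ are joined by a path that alternates between center-type and stable-type segments inside $L$. It therefore suffices to show $\Hu$ is constant along center curves in $L$ and along stable leaves. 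Along stable leaves this is \cref{prop:stos} (lifted to $\Om$, using \cref{prop:semiconj}): if $q \in \Ws(p)$ then $\Hu(p)=\Hu(q)$ since $\piu$ kills the unstable direction of $A$ and the $H$-images converge. Along a center curve in $L$, the key point is that a center curve tangent to $\Ec$ inside a $cs$-leaf projects (via the quotient by $\bbZ^2$, or via comparison with the 2-dimensional model on a torus) to a curve tangent to $\Ec_g$ on the boundary torus; invoking \cref{prop:ctou} in that two-dimensional model, center curves map into unstable leaves of $A$, and $\piu$ is constant on those. Assembling these, $\Hu$ is constant on $L$.

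For the ``only if'' direction, suppose $p,q \in \Omin$ with $\Hu(p)=\Hu(q)$. First I would pick the leaf $L \in \Fcs$ through $p$ (one exists by axiom (4) of branching foliations). The goal is to show $q \in L$. Since $\Hu$ separates the two complementary half-spaces of $L$ in a controlled way — this is exactly the content of the lemma preceding \cref{lemma:halftohalf} in \cref{sec:dimtwo}, which should carry over to $\Om$ because $L$ is a properly embedded plane dividing $\tM$ by \cref{prop:csdivide}, and because $\Hu$ differs from $\piu$ by a bounded amount — the condition $\Hu(q)=\Hu(p)$ forces $q$ to lie in a bounded neighborhood of $L$. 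Then I would use the dynamics: replacing $f$ by an even iterate, apply the analogue of \cref{lemma:halftohalf} and \cref{lemma:premonotonic} to ``spread out'' the $\Hu$-coordinate by the unstable eigenvalue $\lam$ of $A$. Concretely, if $q \notin L$ then $q$ lies in one open half-space, say $L^+$; pushing forward by $f^{k}$ and using $\lam^k \Hu = \Hu f^k$, the point $f^k(q)$ and the leaf $f^k(L)=L'$ (a leaf through $f^k(p)$) would have $\Hu$-coordinates that are still equal but now $f^k(q)$ is driven far from $L'$ in a way incompatible with the half-space estimate, giving a contradiction. Hence $q \in L$.

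The main obstacle I expect is the center-curve step in the ``if'' direction: making rigorous the claim that $\Hu$ is constant along center curves inside a $cs$-leaf of the branching foliation in $\Om$. The two-dimensional propositions (\cref{prop:ctou}, \cref{prop:monotonic}) are stated for the dynamics on $\bbR^2$ coming from a surface diffeomorphism on $\bbT^2$, whereas here the center curves live in a non-compact $cs$-leaf inside a 3-manifold and need not be dynamically defined leaves of a genuine foliation. One needs to relate the three-dimensional semiconjugacy $H$ of \cref{prop:semiconj} to the two-dimensional Franks semiconjugacy associated to the boundary torus, and to control how center segments in interior $cs$-leaves behave — this presumably requires the structure of branching foliations on $\bbT^2 \times [0,1]$ promised in the appendix, and possibly a prior proposition (stated but not yet proved in the excerpt) identifying center curves in $\Fcs \cap \Fcu$ with fibers of $H$. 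The monotonicity established in \cref{prop:monotonic}, suitably transported to $\Om$, is what rules out $\Hu$ being non-constant along such a center curve, so the crux is setting up that transport carefully.
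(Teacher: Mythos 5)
Your proposal has a genuine gap, and it is essentially the same gap in both directions. For the ``if'' direction, the step you yourself flag as the crux --- that $\Hu$ is constant along a center curve inside an interior leaf $L \in \Fcs$ --- cannot be obtained by projecting to a boundary torus and invoking \cref{prop:ctou}: an interior center curve bears no relation to the two-dimensional dynamics on $\del \Om$, and the constancy of $\Hu$ on such curves is essentially the statement being proved. The paper never argues along center curves at all. Instead it first establishes the bound $|\piu(p)-\piu(q)|<R$ for all $p,q \in \Ku \cap L$ (\cref{prop:csbound}). This rests on two nontrivial inputs your proposal does not supply: (i) the purely topological \cref{prop:linfoln}, proved in the appendix by analyzing how the $\bbZ^2$-translates of a single leaf nest, which bounds \emph{some} linear functional $a\piu + b\pis$ on $\Ku \cap L$; and (ii) \cref{lemma:sgrow1} --- or, when both boundary components are tangent to $\Ecs$, the hidden-torus argument of \cref{sec:hiddentorus} giving \cref{lemma:sgrow2} --- which rules out that functional being a multiple of $\pis$. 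With \cref{prop:csbound} in hand the ``if'' direction is immediate: $f(\Ku)\subof\Ku$ gives $\sup_{n\ge 0}|\piu f^n(p)-\piu f^n(q)|<R$, and by the semiconjugacy this bounded-forward-orbit condition is equivalent to $\Hu(p)=\Hu(q)$.

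Your ``only if'' direction has the same hidden dependence. The claim that the two complementary half-spaces of $L$ are separated by the value of $\Hu$ up to bounded error is the three-dimensional analogue of the lemma preceding \cref{lemma:halftohalf}, whose two-dimensional proof uses that each stable leaf of $g$ has bounded $\piu$-spread; in $\Om$ the separating object is the whole leaf $L$, and the required spread bound on $\Ku\cap L$ is again \cref{prop:csbound}. Also, the contradiction is not that $f^k(q)$ is ``driven far from'' the image leaf: the paper traps a small unstable segment $J$ in $L_p^- \cap L_q^+ \cap \Omin$, observes that $\piu f^n(J)$ then stays uniformly bounded for all $n$, and contradicts \cref{prop:segubound}. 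Your outline of this direction would work once \cref{prop:csbound} and \cref{prop:leafsplit} are available, but without the appendix bound and the sgrow lemmas both halves of your argument are unsupported.
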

This is proved first in \cref{sec:csfiber} for the specific case
where at least one component of $\del \Om$ is tangent to $\Ecu$.
\Cref{sec:hiddentorus} proves the result in the case
where both components of $\del \Om$ are tangent to $\Ecs$.
The proof of the latter case relies on the proof of the former,
and this significantly complicates the exposition.
However, we know of no simpler method to establish \cref{prop:csH}.

An analogous statement also holds for $\Hs = \pis \circ H$
and any branching foliation $\Fcu$ tangent to $\Ecu$.
    
\begin{cor} \label{cor:cuH}
    For $p,q \in \Omin$,
    $\Hs(p) = \Hs(q)$
    if and only if
    there is $L \in \Fcu$ such that $p,q \in L$.
\end{cor}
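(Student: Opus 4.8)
The plan is to deduce \cref{cor:cuH} from \cref{prop:csH} by applying the latter to the inverse diffeomorphism $f\inv$. The entire framework of \cref{sec:defOm}, and \cref{prop:csH} in particular, is symmetric under simultaneously exchanging $f$ with $f\inv$, exchanging $\Es$ with $\Eu$, and hence exchanging $\Ecs$ with $\Ecu$; so the corollary should be a direct translation of the proposition.

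First I would verify that $f\inv$ satisfies the hypotheses under which \cref{prop:csH} was established. The map $f\inv$ is partially hyperbolic with stable, center, and unstable bundles $\Eu$, $\Ec$, $\Es$, and it preserves their orientations because $f$ does. A $cu$-torus of $f$ is a $cs$-torus of $f\inv$ and conversely, so the union $\cT$ of all $cs$ and $cu$-tori is the same set, the pieces $M_i$ are the same, and the submanifold $\Om \subof \tM$ meets the conditions in its definition for $f\inv$ exactly as it does for $f$. Moreover $f\inv$ leaves $\Om$ invariant, and from $f(p+z) = f(p) + Az$ one gets $f\inv(p+w) = f\inv(p) + A\inv w$, where $A\inv$ is again hyperbolic. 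The branching foliation $\Fcu$ associated to $f$ is, by construction (apply \cref{thm:branching} to $f\inv$), a branching foliation tangent to $\Ecs_{f\inv}$, and it is $f\inv$-invariant.

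Next I would identify the semiconjugacy. Rearranging the relation $Hf = AH$ from \cref{prop:semiconj} gives $H f\inv = A\inv H$; together with $H(p+z) = H(p)+z$ and $\|H(p)-v\| < C$, the uniqueness clause in \cref{prop:semiconj} shows that $H$ is itself the semiconjugacy between $f\inv|_\Om$ and $A\inv$. Since $\ker\pis$ is the unstable leaf of $A$ through the origin, it is the stable leaf of $A\inv$ through the origin; hence $\pis$ is the linear functional playing the role of $\piu$ for $f\inv$, and $\Hs = \pis\circ H$ plays the role of $\Hu$. With these identifications, the statement of \cref{prop:csH} applied to $f\inv$ reads: for $p,q \in \Omin$, $\Hs(p) = \Hs(q)$ if and only if there is $L \in \Fcu$ with $p,q \in L$, which is precisely \cref{cor:cuH}.

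I do not anticipate a genuine obstacle; the only care required is bookkeeping. One must check that every ingredient entering the statement and proof of \cref{prop:csH} is genuinely symmetric: the admissibility of $\Om$, the finiteness and disjointness of $\cT$, the reduction to an iterate preserving the orientations of the subbundles, the existence and properties of the branching foliation, and the Franks semiconjugacy. In particular, although the proof of \cref{prop:csH} splits into the two cases of \cref{sec:csfiber} and \cref{sec:hiddentorus} according to whether a component of $\del\Om$ is tangent to $\Ecu$ or both are tangent to $\Ecs$, the statement it yields covers every admissible $\Om$, so passing to $f\inv$ — which merely interchanges which boundary components are of which type — causes no difficulty.
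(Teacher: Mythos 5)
Your proposal is correct and matches the paper's intent exactly: the paper gives no separate proof of \cref{cor:cuH}, simply asserting that the "analogous statement" follows by applying the argument for \cref{prop:csH} to $f\inv$, with $\Es$ and $\Eu$ (hence $\pis$ and $\piu$, $\Fcs$ and $\Fcu$) interchanged. Your bookkeeping — in particular that $Hf\inv = A\inv H$ together with the uniqueness clause of \cref{prop:semiconj} identifies $H$ as the Franks semiconjugacy for $f\inv$, and that $\ker\pis$ is the stable leaf of $A\inv$ — correctly fills in the details the paper leaves implicit.
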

After these results are established,
they are used in
\cref{sec:fibers} 
to prove the following.

\begin{prop} \label{prop:orient}
    If $\gam$ : $\tM  \to  \tM$ is a deck transformation such that
    $\gam(\Om) = \Om$,
    then $\gam$ preserves the orientations of $\Eu$, $\Ec$, and $\Es$
    as bundles over $\tM$.
\end{prop}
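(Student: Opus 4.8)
The plan is to argue by contradiction: suppose a deck transformation $\gam$ with $\gam(\Om) = \Om$ reverses the orientation of at least one of the three bundles $\Eu, \Ec, \Es$ over $\tM$. Since $\gam$ acts on $\Om = \bbR^2 \ti I$ as a horizontal translation by some $z \in \bbZ^2$ (composed possibly with something fixing the $I$-direction, but the boundary components map to themselves so $\gam$ preserves $\bbR^2 \ti \{0\}$ and $\bbR^2 \ti \{1\}$ individually — one should check whether $\gam$ can swap the two boundary leaves, but a swap would reverse the transverse orientation and we can absorb that case too), the composition of $\gam$ with a suitable translation-type deck transformation in $Z_0$ gives a deck transformation $\gam'$ fixing $\Om$ pointwise up to the $Z_0$-action yet still reversing some subbundle orientation. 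Concretely, it suffices to rule out the possibility that some $z \in \bbZ^2$ acts on $\Om$ by $p \mapsto p + z$ while the corresponding deck transformation reverses an orientation — i.e.\ that the index $[\bbZ^2 : Z_0]$ is genuinely larger than one in a way that matters for $\Om$.

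First I would pin down which orientation is reversed. The stable bundle $\Es$: if $\gam$ reverses $\Es$, consider a stable leaf $\Ws(x) \subof \Om^\circ$; by \cref{prop:shomeo}-type reasoning (or directly from \cref{prop:stos} and the fact that $H$ is a finite distance from the identity), $\Hu$ restricted to $\Ws(x)$ is bounded, so $\Ws(x)$ stays within bounded $\piu$-distance of a single $A$-stable leaf. The translation $p \mapsto p+z$ maps $\Ws(x)$ to $\Ws(x+z)$; reversing the orientation of $\Es$ would reverse the induced orientation on this line, but this is compatible with translation, so $\Es$ alone gives no contradiction directly — instead I expect the contradiction to come from the center bundle via the semiconjugacy $H$ and \cref{prop:csH}. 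Here is the key mechanism: by \cref{prop:csH}, the fibers of $\Hu$ on $\Om^\circ$ are exactly the intersections $L \cap \Om^\circ$ for $L \in \Fcs$, and (combining with \cref{cor:cuH} and \cref{prop:clines}) the fibers of $H$ itself on $\Om^\circ$ are connected curves tangent to $\Ec$, oriented consistently by \cref{prop:monotonic} so that $\Hu$ is monotonic along them. If $\gam$ reverses the orientation of $\Ec$, then $\gam$ sends an $H$-fiber to an $H$-fiber with the reversed center orientation; but the center orientation along an $H$-fiber is canonically determined (by the requirement that $\Hu$ increase along it, using $\Hu \gam = \Hu + \piu(z)$ which is orientation-preserving on $\bbR$), so $\gam$ must preserve the center orientation. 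An analogous argument using $\Hs$ and $\Fcu$ handles $\Eu$: along an $H$-fiber, or rather using that $\gam$ permutes leaves of $\Fcu$ and the induced orientation on $\Eu$ is forced by the analogous monotonicity of $\Hs$.

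The cleanest route is probably: (i) show $\gam$ preserves the transverse orientation to $\Fcs$ — equivalently the orientation of $\Eu$ — because $\gam$ maps the half-space $L^+$ (from \cref{prop:csdivide}) to a half-space of $\gam(L)$, and the labelling $\pm$ is pinned down on $\Om$ by the sign of $\Hu - \Hu(L)$, which $\gam$ respects since $\Hu \gam(p) = \Hu(p) + \piu(z)$; (ii) symmetrically $\gam$ preserves the orientation of $\Es$, using $\Fcu$, $\Hs$, and $\cref{cor:cuH}$; (iii) then $\gam$ preserves the orientation of $\Ec$ as well, since $\Ec = \Ecs \cap \Ecu$ and its orientation is determined by those of $\Es$ and the transverse-to-$\Es$-in-$\Ecs$ direction, or directly by the monotonicity of $\Hu$ along center curves (\cref{prop:monotonic}). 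The main obstacle I anticipate is step (i)–(ii): one must verify carefully that the $\pm$-labelling of the complementary half-spaces of a $cs$-leaf, and the corresponding bundle orientations, are genuinely determined by $\Hu$ on the interior $\Om^\circ$ and not just up to a global sign that $\gam$ could flip — this is exactly where \cref{prop:csH} does the work, and where the hypothesis $\gam(\Om) = \Om$ (so that $\gam$ commutes appropriately with $H|_\Om$ up to the explicit translation by $A z$, cf.\ \cref{prop:semiconj}) is essential. A secondary subtlety is the possibility that $\gam$ exchanges the two boundary components of $\Om$; in that case $\gam$ reverses the transverse orientation of $\Om$ by construction, and one must check this forces a reversal of, say, $\Eu$ leading to a contradiction with (i) — or, more simply, observe that such a $\gam$ would conjugate $f|_\Om$ to $f\inv|_\Om$ in a way incompatible with both boundary tori being iterated to themselves by $f$ in an orientation-preserving manner on the ambient manifold.
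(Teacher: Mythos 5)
Your steps (i) and (ii) are essentially the paper's argument: the orientation of $\Eu$ on $\Omin$ is pinned down by requiring $\Hu$ to increase (the paper phrases this via injectivity of $\Hu$ along unstable segments, \cref{lemma:uhomeo}, rather than via half-spaces, but the mechanism is identical), and the relation $H\gam = H + z$ then forces $\gam$ to preserve it; symmetrically for $\Es$ with $\Hs$. The hedging in your first paragraph about $Z_0$ and about $\gam$ swapping the boundary components is unnecessary: a deck transformation preserving $\Om$ is a deck transformation of the cover $\Om \to M_i$, i.e.\ acts by $p \mapsto p+z$, and in any case the semiconjugacy relation you quote already does all the work.

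The genuine gap is step (iii), the center bundle. Your ``direct'' mechanism fails: along a fiber $H\inv(v)$ the map $H$ is by definition constant, so $\Hu$ is constant there, not increasing, and it cannot single out an orientation of $\Ec$; \cref{prop:monotonic} concerns center leaves of the two-dimensional boundary dynamics, where $\Hu$ is only weakly monotone (it is constant on the nontrivial fibers of \cref{cor:twov}), so it does not orient $\Ec$ either. Your alternative --- that the orientation of $\Ec$ is ``determined by $\Es$ and the transverse-to-$\Es$-in-$\Ecs$ direction'' --- is circular unless you supply an orientation of $\Ecs$ as a plane bundle that $\gam$ is known to preserve, and that is exactly the missing input. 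The paper closes this step by invoking the hypothesis that $M$ is oriented: a deck transformation therefore preserves the orientation of $T\tM = \Es \oplus \Ec \oplus \Eu$, and having already preserved the orientations of $\Es$ and $\Eu$, it must preserve that of $\Ec$. You never use the orientability of $M$, and without it (or some substitute) your argument for $\Ec$ does not go through.
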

This shows that $Z_0$ above may be taken as equal to $\bbZ^2$.
\Cref{sec:fibers} also proves the following
characterization of the fibers of the semiconjugacy.

\begin{prop} \label{prop:fiber}
    For every $v \in \bbR^2$, the pre-image
    $H \inv(v)$ is a compact segment tangent to $\Ec$.
    Moreover, $H \inv(v)$ intersects each boundary component of $\Om$
    in either a point or a compact segment.
\end{prop}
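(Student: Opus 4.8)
The plan is to exhibit $H\inv(v)$ directly as a center curve by combining \cref{prop:csH} with \cref{cor:cuH}, to control its size using the fact that $H$ lies a bounded distance from the projection $\Om \to \bbR^2$, and to describe its intersection with $\del\Om$ by identifying the restriction of $H$ to a boundary component with the two-dimensional Franks semiconjugacy of \cref{sec:dimtwo}. Suppose first that $H\inv(v)$ meets $\Omin$ and fix $p \in H\inv(v) \cap \Omin$. Choose leaves $\Lcs \in \Fcs$ and $\Lcu \in \Fcu$ through $p$ and let $\ell$ be the connected component of $\Lcs \cap \Lcu$ containing $p$; by \cref{prop:clines} it is a properly embedded topological line, and since $\Lcs$ and $\Lcu$ are tangent to the transverse bundles $\Ecs$ and $\Ecu$, the curve $\ell$ is tangent to $\Ec = \Ecs \cap \Ecu$. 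If $x \in \ell \cap \Omin$, then $x$ and $p$ lie on the common leaf $\Lcs$ and on the common leaf $\Lcu$, so \cref{prop:csH} and \cref{cor:cuH} give $\Hu(x) = \Hu(p) = \piu(v)$ and $\Hs(x) = \Hs(p) = \pis(v)$, whence $H(x) = v$; thus $\ell \cap \Omin \subseteq H\inv(v)$. Conversely, if $q \in H\inv(v) \cap \Omin$, then $\Hu(q) = \Hu(p)$ puts $q$ on a common $cs$-leaf with $p$ (\cref{prop:csH}) and $\Hs(q) = \Hs(p)$ puts $q$ on a common $cu$-leaf with $p$ (\cref{cor:cuH}); the key step is to upgrade this to $q \in \ell$, equivalently to show $H\inv(v) \cap \Omin = \ell \cap \Omin$, and more generally that $H\inv(v)$ is connected. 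I would reduce this to a statement inside a single $cs$-plane (\cref{prop:csdivide}): such a plane is saturated by stable leaves (\cref{prop:ssat}), which endow it with a product structure, and a center curve contained in it meets each stable leaf in at most one point, which is exactly what is proved at the start of the proof of \cref{prop:cvolume}; combined with \cref{prop:uscunique}, this should determine the component of $\Lcs \cap \Lcu$ on which a point of $\Omin$ lies from the value of $H$ there. Proving this two-dimensional lemma---hence that the fiber does not break into several disjoint arcs---is the step I expect to be the main obstacle.

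Granting that $H\inv(v) \cap \Omin = \ell \cap \Omin$, \cref{prop:semiconj}(3) shows that if $q = (w,s) \in H\inv(v)$ then $\|w - v\| = \|w - H(q)\| < C$, so $H\inv(v) \subseteq \overline{B(v,C)} \times I$; hence $H\inv(v)$ is compact and $\ell \cap \Omin$ is bounded. Since $\ell$ is properly embedded, $\ell \cap \Omin$ is therefore contained in a compact subarc of $\ell$, so its closure is a compact subarc $[a_0,a_1]$; the endpoints $a_0$ and $a_1$ lie on $\del\Om$, for if $a_i$ were interior to $\Om$ then points of $\ell$ just beyond $a_i$ would lie in $\Omin$, contradicting the choice of $[a_0,a_1]$. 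Using the connectedness established above, $\ell \cap \Omin$ is the open arc $(a_0,a_1)$, and $H\inv(v) = [a_0,a_1] \cup (H\inv(v) \cap \del\Om)$, with $[a_0,a_1]$ a compact arc tangent to $\Ec$.

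Finally, let $T$ be a boundary component of $\Om$. Then $f|_T$ is a lift to $T \cong \bbR^2$ of a weakly partially hyperbolic diffeomorphism of a $2$-torus, whose derivative preserves the splitting $\Ec \oplus \Es$ or $\Ec \oplus \Eu$ according as $T$ covers a $cs$- or a $cu$-torus, and the relation $f(p+z) = f(p)+Az$ for $z \in \bbZ^2$ shows its linear part is $A$. Since $H|_T$ satisfies $A\,H|_T = H|_T\,f|_T$ and $H|_T(p+z) = H|_T(p) + z$, uniqueness of the Franks semiconjugacy identifies $H|_T$ with the map denoted $H$ in \cref{sec:dimtwo} for the system $f|_T$ (or $f\inv|_T$, if $T$ covers a $cu$-torus); in particular $H|_T$ is surjective, so $H\inv(v) \cap T = (H|_T)\inv(v)$ is nonempty, and by \cref{cor:twov} it is either a point or a compact segment contained in a center leaf of $f|_T$, hence a compact segment tangent to $\Ec$. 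The interior arc $[a_0,a_1]$ meets each boundary component---its endpoints lie on $\del\Om$, and by the connectedness of $H\inv(v)$ together with $H\inv(v) \cap T \ne \emptyset$ for both components, one endpoint lies on each---and it reaches $a_i$ tangent to $\Ec$; by \cref{prop:uniqueinttwo} the boundary segment $(H|_T)\inv(v)$ lies in the center leaf of $f|_T$ through $a_i$, so it and $[a_0,a_1]$ fit together along this leaf into a single compact segment tangent to $\Ec$, which is $H\inv(v)$. (If $H\inv(v) \cap \Omin = \emptyset$, then $H\inv(v) \subseteq \del\Om$, and this last analysis applies directly.)
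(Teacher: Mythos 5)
Your endgame matches the paper's (identify the interior fiber with one component of $\Lcs \cap \Lcu$, use \cref{cor:twov} for the boundary pieces, get compactness from properness of $H$), but the step you flag as ``the main obstacle'' is a genuine gap, and your proposed route around it is not the one that works. The paper does not prove connectedness of the fiber by an interior two-dimensional argument inside a single $cs$-plane; the difficulty with that approach is that two disjoint center arcs $J$ and $J'$ in $H\inv(v)\cap\Omin$ need not share any stable or unstable leaf, so \cref{prop:uscunique} and the ``center meets each stable leaf at most once'' fact from the proof of \cref{prop:cvolume} give you nothing to compare them with. What the paper actually uses is \cref{lemma:fiberU}: for a boundary component $S$ of $\delcs$ and $U = \Om \cap \bigcup_{x\in S}\Wu(x)$, the set $H\inv(v)\cap U$ is a \emph{single} topological ray, obtained by sliding the center curve $\Lcu\cap S$ along unstable leaves until $\Hu$ reaches the prescribed value. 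This one lemma supplies all three missing inputs at once: (a) the fiber does meet $\Omin$ (so your final parenthetical case $H\inv(v)\cap\Omin=\varnothing$ must be \emph{excluded}, not ``handled directly''---if the fiber lay entirely in $\del\Om$ it would have a nonempty piece on each boundary torus by surjectivity of $H|_T$ and hence be disconnected, contradicting the statement you are proving); (b) two interior components $J$ and $J'$ would both have to coincide with this unique ray near $S$, which is absurd; and (c) the two endpoints of $J$ cannot lie on the same boundary component, which you instead derive from ``connectedness of $H\inv(v)$''---circular, since that is the conclusion.

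So the skeleton of your argument is right, but without \cref{lemma:fiberU} (or an equivalent analysis of the fiber in an unstably saturated neighbourhood of the boundary) the proof does not close: uniqueness of the interior component, non-triviality of the interior fiber, and the matching of endpoints to distinct boundary components all remain unproved. I'd encourage you to try to prove the ray lemma itself; its proof combines \cref{cor:cuH}, \cref{prop:twogps}, \cref{prop:monotonic}, and \cref{lemma:Hunbdd}, and is where the real work of \cref{prop:fiber} lives.
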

\Cref{sec:leafconj} uses this to construct the topological conjugacy
given in \cref{thm:main}.

\section{Center-stable leaves} \label{sec:csfiber} %

This section gives the proof of \cref{prop:csH}
under certain assumptions.  These assumptions are removed in the next section.
Let $f$, $\Om$, $H$, and $\Fcs$
be as in the previous section.
By abuse of notation,
we consider $\piu$ and $\pis$ as both linear maps from $\bbR^2$ to $\bbR$
and as maps from $\Om = \bbR^2 \times I$ to $\bbR$ which depend only on the
$\bbR^2$ coordinate.

\begin{prop} \label{prop:segubound}
    For any constant $D>0$, there is $\ell > 0$ such that
    any unstable curve $J \subset \Om$ of length at least $\ell$
    contains points $p$ and $q$ with
    $|\pi^u(p) - \pi^u(q)| > D$.
\end{prop}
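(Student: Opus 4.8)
The plan is to prove the contrapositive: given $D>0$, I will produce $\ell>0$ such that any unstable curve $J\subset\Om$ with $|\piu(p)-\piu(q)|\le D$ for all $p,q\in J$ has length at most $\ell$. Write $\Om=\bbR^2\ti I$ with $I\subset\bbR$ a fixed compact interval, so the $I$-coordinate of $J$ is automatically confined to $I$. The idea is that if, in addition, the $\pis$-coordinate of $J$ can be confined to an interval whose length depends only on universal constants, then $J$ lies in a subset of $\tM$ of diameter bounded in terms of $D$, and a volume estimate via \cref{prop:volume} bounds its length.

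So the heart of the argument is the following claim: there is a universal constant $D_s>0$ such that every unstable curve in $\Om$ has $\pis$-extent at most $D_s$. To see this, take $p,q$ on a common leaf of $\Wu$. Since $M$ is compact, $f^{-1}$ contracts arc-length along leaves of $\Wu$ uniformly, so $d(f^{-n}(p),f^{-n}(q))\to 0$; because $f(\Om)=\Om$, all these points lie in $\Om$, and by \cref{prop:semiconj}(3) each is displaced a uniformly bounded Euclidean distance by $H$. Hence $\|H(f^{-n}(p))-H(f^{-n}(q))\|$ stays bounded as $n\to\infty$. But \cref{prop:semiconj}(1) gives $Hf^{-n}=A^{-n}H$, so this quantity equals $\|A^{-n}(H(p)-H(q))\|$; since $A$ is hyperbolic with expanding eigenspace $\ker\pis$, one has $\|A^{-n}w\|\to\infty$ for every $w\notin\ker\pis$, forcing $H(p)-H(q)\in\ker\pis$, i.e.\ $\Hs(p)=\Hs(q)$. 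Thus $\pis\circ H$ is constant, say $\equiv c$, along any unstable curve $J\subset\Om$; and for $p=(v,s)\in J$ the bound $\|H(p)-v\|<C$ from \cref{prop:semiconj} gives $|\pis(p)-c|=|\pis(v-H(p))|<KC$, where $K$ bounds the norm of $\pis$. Taking $D_s:=2KC$ proves the claim. (This is the $cu$-analogue of \cref{prop:stos}; it is the only place the semiconjugacy enters, and the rest is soft.)

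Now let $J\subset\Om$ be an unstable curve with $\piu$-extent at most $D$. By the claim its $\pis$-extent is at most $D_s$, and its $I$-coordinate lies in $I$, so the projection of $J$ to $\bbR^2$ lies in a parallelogram bounded by level sets of $\piu$ and $\pis$ whose diameter is bounded in terms of $D$; since the metric on $\Om$ is comparable to the Euclidean one, $J$ itself lies in a subset of $\tM$ of diameter at most some $\rho=\rho(D)$. Fixing $x_0\in J$, we get $U_1(J)\subset B(x_0,\rho+1)$, the metric ball of radius $\rho+1$ about $x_0$ in $\tM$; and since $\tM$ is a Riemannian cover of the closed manifold $M$ it has bounded geometry, so $\volume B(x_0,\rho+1)\le V(D)$ for some finite $V(D)$ depending only on $D$. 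By \cref{prop:volume}, $C'\cdot\length(J)\le\volume U_1(J)\le V(D)$, hence $\length(J)\le V(D)/C'$. Taking $\ell:=V(D)/C'+1$ proves the proposition.

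I expect the claim of the second paragraph to be the only genuine obstacle: once one knows that $\pis\circ H$ is constant along unstable leaves --- which tethers every unstable leaf to a bounded neighbourhood of a single level set of $\pis$ --- the confinement of $J$ and the volume bound are routine. The one point requiring care is to carry all the metric comparisons (between the Euclidean metric on $\bbR^2\ti I$, the metric induced from $M$, and the displacement bound for $H$) with uniform constants, but this is standard given the conventions fixed in \cref{sec:defOm}.
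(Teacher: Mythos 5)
Your proof is correct, and it is essentially the standard argument that the paper outsources to \cite{ham-nil}: the semiconjugacy collapses unstable leaves of $f$ into unstable leaves of $A$ (so $\Hs$, and hence $\pis$ up to the constant $C$, is essentially constant along $J$), after which a curve with bounded $\piu$- and $\pis$-extent is trapped in a set of bounded diameter and \cref{prop:volume} bounds its length. The only points to watch are the ones you already flag as routine (uniform comparability of the metrics, and the uniform volume bound for balls of fixed radius in the cover), so there is nothing to fix.
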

\begin{proof}
    This result is analogous to \cite[Lemma 4.7]{ham-nil}
    and the proof given there applies here
    with only minor modifications.
\end{proof}

Define $\delcs$ as the union of those components of $\del \Om$ which are tangent
to $\Ecs$.
Let $d_u$ be distance measured along an unstable leaf and
define $\Ku$ as the largest subset of $\Om$ for which the following
property holds{:}
if $p \in \Ku$, $q \in \Wu(p)$, and $d_u(p,q) < 1$, then $q \notin \delcs$.
In other words, $\Ku$ is the set of points at distance at least 1
from $\delcs$ where the distance
is measured along the unstable direction.

Note that there are three possibilities.  If both components of $\del \Om$
are tangent to $\Ecs$, then $\Eu$ is transverse to $\del \Om$
and $\Ku$ lies in the interior $\Omin$ of $\Om$.
If both components of $\del \Om$ are tangent to $\Ecu$,
then $\delcs$ is empty and $\Ku = \Om$.
Finally, if one component of $\del \Om$ is tangent to $\Ecu$ and the other
is tangent to $\Ecs$, then $\Ku$ contains one boundary component
and not the other.
Fortunately,
most of the arguments in the remainder of the paper
do not depend on which case we are in.

One can verify that $\Ku$ is a closed subset of $\Om$
and is invariant under any deck transformation which fixes $\Om$.
As $f$ increases distances measured along the unstable direction,
it follows that $f(\Ku) \subset \Ku$.
Note that if $J$ is a compact subset of $\Omin$,
then there is an integer $N(J)$ such that $f^n(J) \subset \Ku$
for all $n > N(J)$.

We now consider the intersection of $\Ku$ 
with the leaves of the branching foliation $\Fcs$.

\begin{prop} \label{prop:linfoln}
    There is a non-zero map $\pi:\Om \to \bbR$
    of the form $\pi = a \piu + b \pis$ with constants $a,b \in \bbR$
    such that
    if $L \in \Fcs$ and $p,q \in \Ku \cap L$ then
    $|\pi(p) - \pi(q)| < 1$.
\end{prop}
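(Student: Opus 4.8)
The plan is to show that the proposition holds with $\pi$ equal to a suitable multiple of $\piu$. Since any nonzero scalar multiple of $\piu$ is of the form $a\piu+b\pis$ with $(a,b)\neq 0$, it suffices to produce a universal constant $C_0>0$ with $|\piu(p)-\piu(q)|<C_0$ whenever $L\in\Fcs$ and $p,q\in\Ku\cap L$; then $\pi:=\piu/C_0$ is as required. (That $b=0$ is forced: along a stable leaf $\pis$ is unbounded, by the argument behind \cref{prop:shomeo}, while $\piu$ stays bounded, so no $\pis$-contribution can be tolerated.) I would bound the variation of $\piu$ separately in the stable direction and along center arcs inside $\Ku$, and then concatenate.

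In the stable direction this is the analogue of the unlabelled lemma preceding \cref{lemma:halftohalf}: $H$ is at bounded distance from the identity by \cref{prop:semiconj}, hence uniformly continuous, and since $f$ contracts stable leaves and satisfies $f(\Om)=\Om$, for $p,q$ on a common stable leaf we get $d(f^n p,f^n q)\to 0$, hence $d(A^nH(p),A^nH(q))\to 0$, so $H(p),H(q)$ lie on one stable leaf of $A$ and $\Hu(p)=\Hu(q)$; combined with $|\piu-\Hu|<\|\piu\|\,C$ this gives a uniform $R_0$ with $|\piu(p)-\piu(q)|<R_0$ for such $p,q$. By \cref{prop:ssat} every leaf of $\Fcs$ is stable-saturated, so this pins down $\piu$ along the stable direction of $L$.

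The crux is a uniform bound on the variation of $\piu$ along center arcs that remain in $\Ku$: there is $C_1$ with $\piu$-variation $<C_1$ on any center arc $J\subset L\cap\Ku$, $L\in\Fcs$. I would argue by contradiction. Given $L_n\in\Fcs$ and center arcs $J_n\subset L_n\cap\Ku$ whose $\piu$-variation is at least $n$, translate by deck transformations in $Z_0$ (which preserve $\Ku$, $\Om$ and the collection $\Fcs$, and act cocompactly since $Z_0$ has full rank) so that every $J_n$ meets a fixed compact set; the closure axiom for branching foliations then yields $L_n\to L_\infty\in\Fcs$ and $J_n\to J_\infty$, a complete center line with $J_\infty\subset\Ku$ (as $\Ku$ is closed) on which $\piu$ is unbounded. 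To rule this out I would use the dynamics: since $f(\Ku)\subset\Ku$, $f(\Fcs)=\Fcs$, and $\Hu\circ f=\lam\Hu$ with $\lam>1$, the forward images of $J_\infty$ are center lines inside $cs$-leaves meeting $\Ku$, and (after renormalizing by deck transformations to stay over a compact set) the $cs$-leaves must accumulate on $\delcs$ or onto a $cu$-boundary component; but $\Ku$ is bounded away from $\delcs$, and near a $cu$-boundary component the transversality of $\Es$ and $\Eu$ to it controls the leaves, giving a contradiction. The volume bound \cref{prop:cvolume} (unbounded $\piu$ forces unbounded arc length, hence infinite volume for the $U_1$-neighbourhood) is available to block the remaining alternative, that the limit leaf is $f$-invariant.

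Finally I would combine the two bounds: for $p,q\in\Ku\cap L$, join them inside $L\cong\bbR^2$ by a path consisting of at most one center segment and at most one stable segment — this is possible because $L$ is stable-saturated and, by the Poincar\'e--Bendixson argument used in the proof of \cref{prop:cvolume}, a center curve and a stable curve of $L$ meet in at most one point — arranging the center piece to lie in a bounded enlargement of $\Ku$ (to which the crux still applies) and the stable piece to lie in $\Om$, so that $|\piu(p)-\piu(q)|<C_1+R_0=:C_0$. The main obstacle is the contradiction step in the third paragraph: ruling out a complete center leaf sitting at positive unstable-distance from $\delcs$ genuinely requires the geometry of $\Fcs$ near the $cs$-tori rather than any soft argument. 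A secondary nuisance is the concatenation step, where one must cope with $\Ku\cap L$ possibly being disconnected and must keep the connecting stable segments inside $\Om$.
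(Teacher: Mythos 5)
There is a genuine gap, and it sits exactly where you flag it yourself. You are attempting to prove a strictly stronger statement than \cref{prop:linfoln}: namely that $\pi$ can be taken to be a multiple of $\piu$. In the paper that stronger fact is \cref{prop:csbound}, and it is derived \emph{from} \cref{prop:linfoln} together with \cref{lemma:sgrow1} (or, when $\delcs=\del\Om$, \cref{lemma:sgrow2}, whose proof occupies all of \cref{sec:hiddentorus} and invokes the existence of a ``hidden'' compact $cs$-submanifold). The statement of \cref{prop:linfoln} deliberately does not fix the direction of $\pi$: the paper's proof (in the appendix) is purely topological. It fixes one leaf $S$, shows via \cref{lemma:Scover} that the $Z_0$-translates of the half-spaces $S_\pm$ cover $\Om\sans\delcs$ (using that no $cs$-torus meets the interior of $\Om$), extracts a linear functional $\pi$ from the nesting order of these translates \`a la Lemmas 3.8--3.12 of \cite{BBI2}, and concludes by a finite-cover argument on a fundamental domain. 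No iteration of $f$ and no identification of $\pi$ with $\piu$ occurs at this stage; a priori $\pi$ could point in any direction, and ruling out $\pi\propto\pis$ is precisely the hard dynamical content of \cref{lemma:sgrow1,lemma:sgrow2}.

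Concretely, the step of yours that fails is the contradiction in your third paragraph. After the compactness argument you are left with a complete center line $J_\infty$ inside a leaf of $\Fcs$, contained in $\Ku$, on which $\piu$ is unbounded. You give no actual mechanism forcing its forward iterates (or the leaves containing them) to ``accumulate on $\delcs$ or onto a $cu$-boundary component''; a complete center line can remain in the interior of $\Om$ at bounded distance from nothing in particular, since $\Omin$ is non-compact. Your fallback via \cref{prop:cvolume} is also inverted: an unbounded-$\piu$ curve has infinite length and hence its $U_1$-neighbourhood has infinite volume, but $\tM$ has infinite volume, so there is no contradiction — the paper only uses \cref{prop:cvolume} in the opposite direction (bounded diameter $\Rightarrow$ bounded length, as in \cref{cor:fiberlength}). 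Excluding such a center line is essentially equivalent to \cref{prop:csH} itself, which is what the whole chain \cref{prop:linfoln} $\Rightarrow$ \cref{prop:csbound} $\Rightarrow$ \cref{prop:csbdd} is built to prove; assuming it here is circular. The stable-direction bound in your second paragraph is fine, and the concatenation issues you mention are secondary, but without the center-direction bound the argument does not close.
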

We prove this
by adapting techniques presented in \cite{BBI2,HP1}.
The proof is largely topological in nature, instead of involving
the dynamics acting on $\Om$.
Therefore, we defer the proof of \cref{prop:linfoln} to the appendix.

\begin{lemma} \label{lemma:stabonce}
    No stable or unstable leaf intersects both boundary components
    of $\Om$.
\end{lemma}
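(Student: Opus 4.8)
The strategy is to suppose, for contradiction, that a stable leaf $\Ws(p)$ meets both components of $\del \Om$, and derive a contradiction with the hyperbolicity of the semiconjugacy $A$ together with \cref{prop:segubound} (or rather its stable analog) and the propagation properties of $H$. The case of an unstable leaf is symmetric, working with $f \inv$ in place of $f$. First I would recall that each boundary component $T$ of $\Om$ quotients to a $cs$ or $cu$-torus in $M$, and in either case $f|_T$ is homotopic to a hyperbolic toral automorphism by \cref{prop:hyptwo}; in particular the subbundle directions on $T$ that are contracted/expanded are well controlled. The point is that on a $cs$-torus the stable direction is tangent to the torus, while on a $cu$-torus the stable direction is transverse, and a stable leaf can cross a $cu$-boundary component only in a single point by the same argument as in \cref{prop:uscunique} applied to $\Fcu$ (a stable leaf lies in a $\Fcu$-leaf, which meets an unstable leaf at most once — but more directly, a stable leaf tangentially contained in no torus crosses a transverse torus discretely, and properness plus the product structure $\Om \cong \bbR^2 \times I$ forces it to cross at most once).

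The heart of the matter is therefore the case where both crossed boundary components are $cs$-tori, or where a stable leaf wanders from a $cs$-boundary all the way across to the other side. Here I would use the Franks semiconjugacy $H$ and the coordinate $\Hs = \pis \circ H$. By \cref{prop:stos} (in the two-dimensional incarnation applied on each boundary torus) and the fact that $H$ is a bounded distance from the projection $(v,s) \mapsto v$, the function $\pis$ is bounded along any single stable leaf of $g = f|_T$ sitting inside a $cs$-torus $T$; so along the part of $\Ws(p)$ lying near $T$, $\pis$ stays in a bounded window. If $\Ws(p)$ ran from one $cs$-boundary to the other, it would be a properly embedded line in $\Om \cong \bbR^2 \times I$ joining the two ends in the $I$-direction, hence of infinite length, and an infinite-length stable curve must, by the stable analog of \cref{prop:segubound}, contain two points with $|\pis(\cdot) - \pis(\cdot)|$ arbitrarily large. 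But this contradicts the uniform bound: on a stable leaf of $g$, $\Hs$ is constant (it is the analog of the fact in \cref{prop:shomeo} that center leaves are $\Hs$-constant — wait, here it is the stable direction, so I would instead invoke that $\pis$ is bounded on stable leaves by \cref{prop:stos} directly), and the boundedness propagates into $\Om$ because $\Ws(p)\cap \Om$ stays within bounded $\pis$-distance of its trace on $T$.

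Concretely, the key steps in order: (i) reduce to the case of a stable leaf (the unstable case follows by applying everything to $f \inv$), and observe via \cref{prop:csdivide}-type separation that a boundary component tangent to $\Ecu$ can be crossed by a stable leaf at most once, so the only issue is two $cs$-boundary crossings; (ii) if $\Ws(p)$ meets a $cs$-boundary component $T$, it contains a full stable leaf of $f|_T$ inside $T$ (by \cref{prop:ssat}, since $T$ is itself tangent to $\Ecs$) and on that sub-leaf $\pis$ varies by at most a uniform constant (stable analog of the bound derived from \cref{prop:stos}); (iii) using that $H$ is a bounded distance from the identity on all of $\Om$, conclude $\pis$ varies by at most a uniform constant $R_1$ along the entire leaf $\Ws(p) \cap \Om$; (iv) but if $\Ws(p)$ joined both boundary components it would have infinite length, so by the stable version of \cref{prop:segubound} it must contain two points with $\pis$-difference exceeding $R_1$, a contradiction. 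I expect step (iii) — propagating the bound on $\pis$ from the intersection with the boundary torus to the whole leaf inside $\Om$ — to be the main obstacle, since it requires controlling how $\Ws(p)$ behaves in the interior of $\Om$ where we do not yet know the relationship between $\Fcs$, $H$, and the stable foliation; it may be cleanest to instead run the argument with the dynamics, pushing forward by $f^n$: for large $n$, $f^n$ of a bounded-length interior piece lies in $\Ku$, and on $\Ku$ the coordinate $\pi = a\piu + b\pis$ from \cref{prop:linfoln} is nearly constant along $\Fcs$-leaves, which — combined with the exponential stretching of the stable direction being contracted — pins down $\pis$ along the stable leaf and forces the contradiction.
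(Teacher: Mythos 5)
Your proposal misses the short argument the paper actually uses, and the route you sketch has a genuine gap. The paper's proof is two lines: the two boundary components of $\Om$ quotient to disjoint compact tori in $M$, so there is a uniform lower bound $\delta>0$ on the distance between them; if a stable (resp.\ unstable) segment $J$ had an endpoint on each component, then $f^n(J)$ for $n\to+\infty$ (resp.\ $n\to-\infty$) would still join the two components (since $f$ preserves each one) but would have length less than $\delta$, a contradiction. The only dynamical input is the uniform contraction of $\Es$, used directly on the connecting segment.

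The concrete problems with your version: (a) In step (iv) you assert that a stable leaf joining the two boundary components ``would have infinite length.'' That is false for the relevant object --- the subsegment of $\Ws(p)$ running from $\bbR^2\ti\{0\}$ to $\bbR^2\ti\{1\}$ is compact and of finite length, while the \emph{whole} leaf has infinite length whether or not it meets both components. So the stable analogue of \cref{prop:segubound} gives you unbounded $\pis$-variation along every stable leaf and yields no contradiction specific to leaves crossing $\Om$. (b) Step (iii), which you flag yourself, is not just an obstacle but a circularity: the control of $\pis$ (or $\piu$) along $\Fcs$-leaves in the interior of $\Om$ is exactly what \cref{lemma:sgrow1}, \cref{prop:csbound}, and \cref{prop:csH} establish, and those results use \cref{lemma:stabonce} in their proofs; your fallback via $\Ku$ and \cref{prop:linfoln} only bounds $\pi=a\piu+b\pis$ with unknown coefficients, which is insufficient and again feeds into the later, dependent lemmas. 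One correct observation you make in passing --- that a stable leaf meeting a $cs$-boundary component is entirely contained in it --- does dispose of some cases, but the clean uniform-separation-plus-contraction argument handles all cases at once and is what you should use.
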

\begin{proof}
    Note that there is a uniform lower bound
    on the distance between points in the two boundary components of $\Om$.
    If a stable or unstable segment $J$
    had endpoints on both boundary components,
    one could find $n  \in  \bbZ$ such that the length of $f^n(J)$ was smaller than
    this lower bound, and this would give a contradiction.
\end{proof}

\begin{lemma} \label{lemma:sgrow1}
    If $\delcs  \ne  \del \Om$, then
    for any $c > 0$,
    there is $L  \in  \Fcs$ and $p$, $q  \in  \Ku \cap L$ such that
    $|\pis(p) - \pis(q)| > c$.
\end{lemma}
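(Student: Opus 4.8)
The plan is to use the hypothesis $\delcs \ne \del\Om$ to produce a boundary component $T$ of $\Om$ tangent to $\Ecu$, and then to exhibit an entire leaf of the center foliation of the two-dimensional dynamics on $T$ that lies inside $\Ku \cap L$ for a suitable $L \in \Fcs$ and along which $\pis$ is unbounded. This is in fact stronger than the stated conclusion, which then follows by choosing $p,q$ on that leaf sufficiently far apart.

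For the setup: since $\delcs \ne \del\Om$, at least one boundary component $T$ of $\Om$ is tangent to $\Ecu$. Because $\Eu$ is tangent to $T$ along $T$, every unstable leaf through a point of $T$ stays in $T$ and hence never meets $\delcs$, so $T \subset \Ku$. Next, $f|_T$ is the lift of a weakly partially hyperbolic diffeomorphism of $\bbT^2$ with splitting $\Ec \oplus \Eu$; its inverse $f\inv|_T$ therefore has exactly the form analyzed in \cref{sec:dimtwo}, with $\Eu|_T$ in the role of the stable bundle and $A\inv$ in the role of the hyperbolic linear map, and by uniqueness of the Franks semiconjugacy $H|_T$ is the associated semiconjugacy. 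Consequently the center direction is uniquely integrable on $T$, giving a foliation $\Wc$ whose leaves are properly embedded (and non-compact) lines, as in \cref{sec:dimtwo}; and, since $\Wu_{A\inv}$ is the stable foliation of $A$, \cref{prop:ctou} shows that $H$ maps each leaf of $\Wc$ into a single stable leaf of $A$.

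Now fix $x \in T$ and, using that some leaf of $\Fcs$ passes through $x$, choose such a leaf $L$; let $\gam$ be the connected component of $L \cap T$ containing $x$. Since $\Ecs$ and $\Ecu$ are transverse, $L$ meets $T$ transversally, so $\gam$ is a $C^1$ curve tangent to $\Ec$ and hence (by \cref{prop:uniqueinttwo}) lies in the center leaf $\Wc(x)$. I claim $\gam = \Wc(x)$. If not, then in an arclength parametrization of $\Wc(x)$ the interval corresponding to $\gam$ has a finite endpoint, approached by a subarc of $\gam$ of finite length; since $L$ is complete, the limit point $y$ lies in $L$, and since $\Wc(x) \subset T$ it also lies in $T$, hence in $L \cap T$. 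But then $y$ lies in the component $\gam$, contradicting that it is a boundary point of that component. So $\gam = \Wc(x)$, a properly embedded line in $T \cong \bbR^2$, in particular an unbounded subset of $\bbR^2$.

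Finally, by \cref{prop:semiconj} the map $H$ restricted to $T$ is a finite distance from the identity, so $H(\gam)$ is an unbounded subset of the stable leaf $\ell$ of $A$ containing it. Since $A$ is hyperbolic, the kernel of $\pis$ (the unstable leaf of $A$) is transverse to $\ell$, so $\pis$ restricts to a proper map $\ell \to \bbR$; hence $\pis(H(\gam))$ is unbounded, and since $\pis$ and $\pis \circ H$ differ by a bounded amount on $\Om$, so is $\pis(\gam)$. As $\gam \subset T \subset \Ku$ and $\gam \subset L$, we have $\gam \subset \Ku \cap L$, and picking $p,q \in \gam$ with $|\pis(p) - \pis(q)| > c$ finishes the proof. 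The step I expect to be the main obstacle is establishing that $\gam$ is a full, non-compact center leaf rather than a bounded arc: this is where one must combine the transversality of the two branching foliations with the completeness of the leaf $L$ — being careful that $L$ itself need not be contained in $\Om$ — together with the fact from \cref{sec:dimtwo} that center leaves on $T$ are genuinely non-compact.
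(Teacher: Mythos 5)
Your proof is correct and follows essentially the same route as the paper's: take a boundary component $T$ tangent to $\Ecu$ (contained in $\Ku$), note that $f\inv|_T$ fits the framework of \cref{sec:dimtwo}, use unique integrability to see that a leaf $L\in\Fcs$ through $x\in T$ meets $T$ in the full center leaf $\Wc_g(x)$, and conclude via \cref{prop:ctou} that $\pis$ is unbounded on it. You supply more detail than the paper at the one nontrivial step (that the component of $L\cap T$ is the entire, properly embedded center leaf, via completeness of $L$ plus transversality), which the paper asserts without elaboration.
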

\begin{proof}
    If $\delcs  \ne  \del \Om$, then
    $\Om$ has a boundary component $S$ tangent to $\Ecu$.
    By \cref{lemma:stabonce}, $S$ is contained in $\Ku$.
    For any $x  \in  S$,
    there is a leaf $L  \in  \Fcs$ through $x$.
    By \cref{prop:uniqueinttwo} where $g$ is given by the restriction
    of $f \inv$ to $S$,
    the intersection of $S$ and $L$ contains $\Wc_g(x)$.
    The result then follows by \cref{prop:ctou}
\end{proof}
Assume for the remainder of the section that
the conclusion of \cref{lemma:sgrow1} holds.

\begin{prop} \label{prop:csbound}
    There is $R>0$ such that if
    $L \in \Fcs$ and $p,q \in \Ku \cap L$ then\\
    $|\pi^u(p) - \pi^u(q)| < R$.
\end{prop}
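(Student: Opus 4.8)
The plan is to derive \cref{prop:csbound} by combining the ``linear bound'' of \cref{prop:linfoln} with the growth of the stable coordinate provided by \cref{lemma:sgrow1}. The point is that \cref{prop:linfoln} tells us that on each leaf $L \in \Fcs$, restricted to $\Ku$, the coordinate $\pi = a\piu + b\pis$ is nearly constant (bounded variation $<1$). If $b \neq 0$ we would conclude that $\pis$ varies only by a bounded amount on $\Ku \cap L$, contradicting \cref{lemma:sgrow1}, which (under the standing assumption) produces leaves where $\pis$ varies by an arbitrarily large amount. Hence $b = 0$, so $\pi$ is a non-zero multiple of $\piu$, and dividing by $|a|$ immediately gives the desired bound $|\piu(p) - \piu(q)| < R$ with $R = 1/|a|$.

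More carefully, first I would invoke \cref{prop:linfoln} to fix $\pi = a\piu + b\pis$ and the constant $1$ in $|\pi(p)-\pi(q)|<1$ for $p,q \in \Ku \cap L$. I then want to use the dynamics to control $\piu$ and $\pis$ separately. Recall $\lam\Hu = \Hu g$ where $\lam$ is the unstable eigenvalue of $A$ and $|\lam|>1$; there is an analogous relation $\lam\inv \pis H = \pis H f$ for the stable eigenvalue, so $\pis \circ H$ contracts under $f$. Applying $f$ or $f\inv$ to a pair $p,q$ on a leaf $L$ moves them to a pair on the leaf $f(L)$ (still a leaf of $\Fcs$ since $f$ preserves $\Fcs$), and since $f(\Ku) \subset \Ku$, iterating forward keeps the points in $\Ku$. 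Using the Franks semiconjugacy $H$ of \cref{prop:semiconj} — which is a finite distance from the identity — the quantity $\pi \circ H$ differs from $\pi$ by a bounded amount, so the bound $|\pi(p)-\pi(q)|<1$ on the leaf translates, up to an additive constant, to a bound on $|\pi(H(p)) - \pi(H(q))|$. Now $\pi(H(\cdot)) = a\,\Hu(\cdot) + b\,(\pis \circ H)(\cdot)$, and under $f^{-n}$ the first term scales by $\lam^{-n}$ and the second by $\lam^{n}$ (or their absolute values), so if $b \neq 0$ the stable part blows up — but on the leaves supplied by \cref{lemma:sgrow1}, where $\pis$ already ranges over an interval of length $>c$, this would contradict the uniform bound. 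So $b = 0$.

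Alternatively, and perhaps cleanly avoiding the iteration, one can argue directly: fix $c$ large and use \cref{lemma:sgrow1} to find $L \in \Fcs$ and $p,q \in \Ku \cap L$ with $|\pis(p) - \pis(q)| > c$; then $|\pi(p)-\pi(q)| = |a(\piu(p)-\piu(q)) + b(\pis(p)-\pis(q))| < 1$ forces $|b| \cdot c < 1 + |a| \cdot |\piu(p)-\piu(q)|$. To close this I need an a priori bound on $|\piu(p)-\piu(q)|$ that is not too large compared to $c$; this is where \cref{prop:segubound} enters — the two points lie on a common center-stable leaf, hence (by \cref{prop:ssat} and the structure of stable saturation) are joined within $L$ by a path whose unstable-length is controlled, forcing $|\piu|$-variation to be at most linear in that length while the $\pis$-variation can be made to dominate. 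Letting $c \to \infty$ yields $b = 0$. The main obstacle I anticipate is precisely this last geometric estimate: ensuring that on a center-stable leaf the variation of $\piu$ cannot keep pace with an unboundedly large variation of $\pis$, i.e. decoupling the two linear functionals using only the leaf geometry plus \cref{prop:segubound}. Once $b=0$ is in hand, setting $R := 2/|a|$ (to absorb the bounded discrepancy between $\pi$ and $\pi \circ H$ and between $H$ and the identity) finishes the proof.
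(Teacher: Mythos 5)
Your identification of the two ingredients (\cref{prop:linfoln} and \cref{lemma:sgrow1}) is right, but the key step is missing: you must rule out the possibility that both $a$ and $b$ are non-zero while the variations of $a\piu$ and $b\pis$ cancel along a leaf, and neither of your arguments achieves this. The dynamical argument iterates \emph{backward}, but $\Ku$ is only forward invariant ($f(\Ku)\subof\Ku$), so $f^{-n}(p)$ and $f^{-n}(q)$ generally leave $\Ku$ and the bound $|\pi(\cdot)-\pi(\cdot)|<1$ from \cref{prop:linfoln} is simply unavailable there; moreover \cref{lemma:sgrow1} only supplies, for each $c$, \emph{some} leaf with large $\pis$-variation, which rules out $a=0$ but says nothing against a leaf on which $a\,\Delta\piu+b\,\Delta\pis$ is small because the two terms cancel. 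The direct argument needs exactly the a priori bound on $|\piu(p)-\piu(q)|$ that the proposition asserts, and \cref{prop:segubound} cannot supply it: that proposition concerns unstable curves and gives a \emph{lower} bound on $\piu$-variation in terms of length, not an upper bound along $cs$-leaves. You flag this as ``the main obstacle,'' and it is; as written the argument is circular.

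The paper closes the gap with a short trick you did not find: apply $f$ once. If $p,q\in\Ku\cap L$ then $f(p),f(q)\in\Ku\cap f(L)$, so $|\pi f(p)-\pi f(q)|<1$ as well, and since $f$ is a bounded distance from $A\ti\id$ this yields $|\piA(p)-\piA(q)|<C$ for $\piA(v,s)=\pi(A(v),s)=a\lam\piu+b\mu\pis$, where $\mu$ is the stable eigenvalue. When $a,b\ne0$ the functionals $\pi$ and $\piA$ are linearly independent (determinant $ab(\mu-\lam)\ne0$), so $\piu$ is a linear combination of two functionals, each of bounded variation on $\Ku\cap L$, and the bound follows with no need to prove $b=0$. \cref{lemma:sgrow1} is then used only to exclude the degenerate case $a=0$. (A forward-iteration version of your dynamical idea would also work: $|a\lam^n\Delta\Hu+b\mu^n\Delta\Hs|$ stays bounded for all $n\ge0$, and since $\mu^n\to0$ this forces $\Delta\Hu=0$ whenever $a\ne0$; but it must be run forward, and aimed at bounding $\piu$ rather than at killing $b$.)
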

\begin{proof}
    Define a linear map $\piA$ : $\Om  \to  \bbR$ by
    \[
        \piA(v,s) = \pi(A(v),s)
      \]
    for $(v,s)  \in  \Om = \bbR^2 \ti I$
    where $\pi$ is given by \cref{prop:linfoln}.

    If $L  \in  \Fcs$ and $p,q  \in  \Ku \cap L$,
    then
    $f(\Ku) \subof \Ku$ implies that
    $f(p),f(q)  \in  \Ku \cap f(L)$
    and therefore
    \begin{math}
        |\pi f(p) - \pi f(q)| < 1.
    \end{math}
    Since $f$ is a finite distance from $A \ti \id$ on $\Om$,
    there is $C > 0$ such that
    \begin{math}
        |\piA(p) - \piA(q)| < C
    \end{math}
    for all such $p$ and $q$.
    If both constants $a$ and $b$ are non-zero in \cref{prop:linfoln},
    then $\pi$ and $\piA$ are linearly independent
    and $\piu$ is a linear combination of $\pi$ and $\piA$.
    From this, the result would follow and
    therefore we may assume that one of $a$ or $b$ is zero.
    The conclusion of \cref{lemma:sgrow1}
    implies that the latter case must hold.
\end{proof}

For a point $p \in \Om$, define
\[
    K^-(p) = \{ q \in \Ku : \pi^s(q)  \le  \pi^s(p) - R \}
\]
and
\[
    K^+(p) = \{ q \in \Ku : \pi^s(q)  \ge  \pi^s(p) + R \}.
\]
Replacing $f$ by $f^2$ if necessary,
we assume that $A$ has positive eigenvalues.
The fact that $f$ is at finite distance from $A \times \id$
then implies that $K^+(f^n(p))$ intersects $f^n(K^+(p))$
for all $n$.

\begin{prop} \label{prop:leafsplit}
    If $L \in \Fcs$ and $p \in \Ku \cap L$, then $\tM \without L$
    has connected components $L^-$ and $L^+$ such that
    $K^-(p) \subset L^-$ and
    $K^+(p) \subset L^+$.
\end{prop}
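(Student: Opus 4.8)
The plan is to use \cref{prop:csdivide}, which tells us that $\tM \without L$ already has exactly two connected components $L^-$ and $L^+$; the entire task is to show that the sets $K^-(p)$ and $K^+(p)$ lie one in each component, and in particular that each of $K^{\pm}(p)$ is contained in a \emph{single} component. First I would record the basic dynamical facts we need: by \cref{prop:csbound} and \cref{prop:linfoln} (with $b=0$), on $\Ku \cap L$ the function $\piu$ varies by less than $R$; meanwhile $K^{\pm}(p)$ are defined by $\pis$ being bounded away from $\pis(p)$, so on any leaf $L$ through $p$ these sets are genuinely disjoint from $L$ itself. Since $\Ku$ is $f$-forward-invariant and $f$ is at finite distance from $A \ti \id$ with $A$ having positive eigenvalues (after passing to $f^2$), iterating $f$ pushes $K^+(p)$ deeper into the $\pis$-positive region and $K^-(p)$ deeper into the $\pis$-negative region, while $f(L)$ is again a leaf of $\Fcs$ through $f(p)$ on which $\piu$ is still controlled.

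Next I would argue that $K^+(p)$ lies entirely in one component, say $L^+$, by a connectedness-type argument. The natural route: suppose $q_0 \in K^+(p)$ lies in $L^-$ and $q_1 \in K^+(p)$ lies in $L^+$. Using that $\Ku$ contains a boundary component (via \cref{lemma:stabonce} and the standing assumption that \cref{lemma:sgrow1}'s conclusion holds) or is all of $\Om$, and that the $\pis$-fibers of $\Ku$ are connected enough, I want to produce inside $K^+(p)$ a path — or at least a chain of stable/unstable segments each staying inside $\Ku$ — joining $q_0$ to $q_1$. Such a path must cross $L$, giving a point of $L \cap \Ku$ with $\pis$-value $\ge \pis(p)+R$. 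But then, combining with $p \in L \cap \Ku$ where $\piu$ varies by $<R$ on $L \cap \Ku$, I'd push this configuration forward under $f^n$: along $f^n(L)$ the $\piu$-spread stays $<R$, whereas the two points' $\pis$-separation grows like $\lam^n$, and since $f^n(\Ku)\subset \Ku$ and $f^n$ is finite distance from $A\ti\id$, I can arrange — as in the proof of \cref{prop:csbound} — a contradiction with \cref{prop:linfoln} applied to the leaf $f^n(L)$. So no leaf of $\Fcs$ meeting $\Ku$ can contain two points of $\Ku$ whose $\pis$-values differ by $2R$ or more, which is exactly what forbids $L$ from separating $K^+(p)$ (or $K^-(p)$) into both components.

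Finally, to see that $K^-(p)$ and $K^+(p)$ land in \emph{different} components, I would note that any stable or unstable leaf through $p$ meets both $K^-(p)$ and $K^+(p)$ only in the far reaches where $\pis$ is large in absolute value — actually the cleaner statement is that along a stable leaf $\pis$ is bounded (by \cref{prop:shomeo}-type control), so I would instead use an unstable or center path: \cref{prop:segubound} guarantees unstable curves on which $\piu$, and with a parallel argument $\pis$, becomes arbitrarily large, and such a long curve through (near) $p$ runs from deep inside $K^-(p)$ to deep inside $K^+(p)$ while crossing $L$ exactly once by \cref{prop:uscunique}; hence the two ends lie in opposite components. Relabel $L^{\pm}$ so that $K^{\pm}(p)\subset L^{\pm}$. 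The step I expect to be the main obstacle is the first one — constructing, inside $\Ku$, a genuine connected path joining two hypothetical points of $K^+(p)$ on opposite sides of $L$, since $\Ku$ need not be convex or even obviously path-connected in a controlled way; handling the three cases for $\delcs$ uniformly there will require care, and I anticipate leaning on the structure of $\Ku$ as a union of full unstable leaves together with the incompressible-torus geometry of $\Om$.
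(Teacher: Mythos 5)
Your setup (two components via \cref{prop:csdivide}, then $L$ disjoint from $K^{\pm}(p)$ via \cref{prop:csbound}) matches the paper, but note a symbol problem you have inherited and then compounded: \cref{prop:csbound} controls $\piu$, not $\pis$, on $\Ku \cap L$, so the disjointness of $K^{\pm}(p)$ from $L$ only follows if $K^{\pm}(p)$ are read as $\piu$-half-slabs (the $\pis$ in the displayed definition is evidently a typo; every subsequent use, e.g.\ in \cref{prop:csbdd}, treats them as constraints on $\piu$). As written, your sentence ``$K^{\pm}(p)$ are defined by $\pis$ being bounded away from $\pis(p)$, so these sets are disjoint from $L$'' is a non sequitur.

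The genuine gap is your central step: joining two hypothetical points of $K^+(p)$ on opposite sides of $L$ by a path inside $K^+(p)$ (or $\Ku$) is exactly the step you concede you cannot carry out, and it is not the route the paper takes. The paper never connects points of $K^{\pm}(p)$ to one another. It argues by contradiction that no component can miss \emph{both} sets: if $K^-(p) \cup K^+(p) \subset L^-$, then since $K^{\pm}(f^n(p))$ meets $f^n(K^{\pm}(p))$ and is disjoint from $f^n(L)$, these sets remain inside $f^n(L^-)$ for every $n$; a small unstable segment $J \subset \Omin \cap L^+$ then has $f^n(J) \subset \Ku \cap f^n(L^+)$ for large $n$, so $\piu(f^n(J))$ is trapped in a band of width $2R$ about $\piu(f^n(p))$, contradicting the unbounded growth from \cref{prop:segubound}. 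Your substitute argument for placing $K^-(p)$ and $K^+(p)$ in different components---a long unstable curve through $p$ running from deep inside $K^-(p)$ to deep inside $K^+(p)$---also fails when $\delcs \neq \varnothing$: one end of $\Wu(p) \cap \Om$ can terminate on $\delcs$ after bounded $\piu$-displacement (compare \cref{lemma:Hunbdd}), so no single unstable arc through $p$ need reach both sets. Replacing your path-construction with the forward-iteration argument above closes the proof; as it stands, the proposal does not.
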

\begin{proof}
    \Cref{prop:csbound} shows that
    $L$ is disjoint from both $K^-(p)$ and $K^+(p)$.
    Therefore, it is enough to show that each of $L^-$ and $L^+$
    intersects at least one of $K^-(p)$ or $K^+(p)$.

    Suppose instead that $K^-(p) \cup K^+(p) \subset L^-$.
    Then for any $n  \ge  0$,
    $K^+(f^n(p))$ intersects $f^n(K^+(p))$ and
    is therefore a subset of $f^n(L^-)$.
    Similarly for $K^-(f^n(p))$.
    Since $p \in \Ku \cap L$, the open set $\Omin \cap L^+$ is non-empty.
    Let $J$ be a small unstable segment lying in $\Omin \cap L^+$.
    Then $f^n(J) \subset \Ku$ for all large $n$.
    Since $f^n(J) \cap f^n(L^-)$ is empty,
    the length of $\pi^u f^n(J)$ is bounded by $2R$
    for all large $n$.
    However, \cref{prop:segubound} shows that there is no uniform bound
    on the length of $\pi^u f^n(J)$
    and gives a contradiction.
\end{proof}
\begin{prop} \label{prop:csbdd}
    For $p,q \in \Ku$, the following are equivalent{:}
    \begin{itemize}
        \item $\sup_{n  \ge  0} |\pi^u f^n(p) - \pi^u f^n(q)| < \infty$, and

        \item there is $L \in \Fcs$ such that $p,q \in L$.
    \end{itemize}  \end{prop}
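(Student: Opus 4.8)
The plan is to prove the two implications separately. For the direction ``$p,q$ in a common leaf $\Rightarrow$ bounded $\pi^u$-difference along forward orbit'', suppose $p,q \in L$ for some $L \in \Fcs$. Since $f(\Ku) \subof \Ku$, we have $f^n(p), f^n(q) \in \Ku \cap f^n(L)$ for all $n \ge 0$, and $f^n(L) \in \Fcs$ because $f$ preserves the branching foliation. Then \cref{prop:csbound} immediately gives $|\pi^u f^n(p) - \pi^u f^n(q)| < R$ for all $n \ge 0$, so the supremum is finite. This direction is essentially free once we invoke the earlier results.

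For the converse — ``bounded $\pi^u$-difference $\Rightarrow$ common leaf'' — suppose $\sup_{n \ge 0} |\pi^u f^n(p) - \pi^u f^n(q)| =: D < \infty$ but, for contradiction, that $p$ and $q$ lie in no common leaf of $\Fcs$. Pick a leaf $L \in \Fcs$ through $p$; then $q \notin L$, so $q$ lies in one of the two half-spaces $L^+$ or $L^-$ given by \cref{prop:csdivide}, say $q \in L^+$. The idea is to compare $q$ with the sets $K^+(p)$ and $K^-(p)$ and use \cref{prop:leafsplit}, which tells us $K^-(p) \subof L^-$ and $K^+(p) \subof L^+$. The key point is that $\pi^s$-separation grows under forward iteration: since $f$ is at finite distance from $A \ti \id$ and $A$ has positive eigenvalues with $\lambda_s < 1 < \lambda_u$, we have roughly $\pi^s f^n(x) \approx \lambda_u^n \pi^s(x)$ up to bounded error (here $\pi^s$ is the coordinate expanded by $A$, matching the convention in the proof of \cref{prop:csbound} where $\pi = b\pis$). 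So, replacing $p$, $q$ by forward iterates $f^n(p)$, $f^n(q)$ — which stay in $\Ku$ and still lie in no common leaf, applied to the leaf $f^n(L)$ — we can arrange that $|\pi^s f^n(p) - \pi^s f^n(q)|$ is as large as we like, in particular larger than $R$. Once $\pi^s f^n(q) \ge \pi^s f^n(p) + R$ or $\pi^s f^n(q) \le \pi^s f^n(p) - R$, the point $f^n(q)$ lies in $K^+(f^n(p))$ or $K^-(f^n(p))$, hence by \cref{prop:leafsplit} applied to $f^n(L)$ it lies in the appropriate half-space. Meanwhile — and this is where the bounded-$\pi^u$ hypothesis enters — I expect one also needs to track the $\pi^u$ coordinate to pin down which half-space $f^n(q)$ sits in relative to $f^n(p)$, so that the growing $\pi^s$-separation forces $f^n(q)$ to jump from one half-space to the other as $n$ varies, contradicting the fact that $f^n(q)$ and $f^n(p)$ are separated by the single leaf $f^n(L)$ for every $n$.

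The cleanest way to run the contradiction is probably this: by \cref{prop:leafsplit}, for each $n$ the half-space of $f^n(L)$ containing $f^n(q)$ is determined once $f^n(q) \in K^\pm(f^n(p))$, and since $f^n(L)$ is a single plane separating $\tM$, $f^n(q)$ is on a fixed side of $f^n(L)$ relative to $f^n(p)$ for all large $n$ — say always in $L^+$-type side. Now use the bounded $\pi^u$-difference together with \cref{prop:segubound}: take a small unstable segment $J$ through $p$ inside the interior, so $f^n(J) \subof \Ku$ for large $n$ and $\length(\pi^u f^n(J)) \to \infty$; but if $q$ were permanently trapped on one side of every $f^n(L)$ while having bounded $\pi^u$-difference from $p$, one derives (as in the proof of \cref{prop:leafsplit}) a uniform bound on $\length(\pi^u f^n(J))$, a contradiction. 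The main obstacle I anticipate is making the ``which side of $f^n(L)$'' bookkeeping precise: one must relate membership in $K^\pm$ (a $\pi^s$-condition) to half-spaces of $f^n(L)$, while simultaneously using that the bounded $\pi^u$-difference prevents $f^n(q)$ from escaping to infinity in the direction \emph{along} $f^n(L)$, so that the only remaining degree of freedom is transverse to $f^n(L)$ — which is exactly the $\pi^s$ direction that is being expanded. Getting these two linear functionals and the geometry of the separating plane to interact correctly, uniformly in $n$, is the technical heart of the argument; everything else reduces to the already-established Propositions \ref{prop:csbound}, \ref{prop:leafsplit}, and \ref{prop:segubound}.
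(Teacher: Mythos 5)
The forward direction (common leaf $\Rightarrow$ bounded $\pi^u$-difference) is correct and is exactly the paper's argument: apply \cref{prop:csbound} to $f^n(p),f^n(q)\in \Ku\cap f^n(L)$.

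The converse is where your argument has genuine gaps. First, the dynamical heuristic in your middle paragraph is backwards or circular: with the paper's conventions, $\pis$ is the coordinate \emph{contracted} by $A$ (its kernel is the unstable leaf of $A$), so forward iteration does \emph{not} make $|\pis f^n(p)-\pis f^n(q)|$ large; and if you instead mean the expanded coordinate $\piu$, then claiming that $|\piu f^n(p)-\piu f^n(q)|$ can be made arbitrarily large directly contradicts the standing hypothesis rather than proving anything --- two points on distinct leaves need not have distinct $\piu$-values, so there is no seed separation to expand. (The paper itself writes $K^\pm(p)$ with $\pis$, but the way $K^\pm$ is used in \cref{prop:leafsplit} and here makes clear it should be a $\piu$-condition; your proposal inherits and compounds this confusion.) Second, and more importantly, you work with only one leaf $L\ni p$ and place the unstable segment $J$ ``through $p$''. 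That setup yields at best a one-sided bound on $\piu$ along $f^n(J)$ and hence no contradiction with \cref{prop:segubound}. The paper's key move is to take \emph{two} leaves $L_p\ni p$ and $L_q\ni q$, note that (WLOG $q\in L_p^-$) the set $L_p^-\cap L_q^+\cap\Omin$ is open and non-empty, and choose $J$ there. Then for large $n$ the segment $f^n(J)$ lies in $\Ku$, avoids $K^+(f^n(p))$ because it stays in $(f^nL_p)^-$, and avoids $K^-(f^n(q))$ because it stays in $(f^nL_q)^+$; this sandwiches $\piu$ on $f^n(J)$ between $\piu f^n(q)-R$ and $\piu f^n(p)+R$, and only now does the bounded-difference hypothesis give a uniform bound on the $\piu$-spread of $f^n(J)$, contradicting \cref{prop:segubound}. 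Your proposal names the right ingredients but is missing this two-leaf sandwich, which is the actual content of the converse.
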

\begin{proof}
    One direction follows from \cref{prop:csbound}
    and the fact that $f(\Ku) \subof \Ku$.
    To prove the other direction, suppose $p \in L_p \in \Fcs$
    and $q \in L_q \in \Fcs$.
    Let $L_p^-$ and $L_p^+$ be as in the previous proposition
    and let
    $L_q^-$ and $L_q^+$ be the corresponding sets associated to $L_q$.
    Assume $q$ does not lie on $L_p$.
    Then $q$ lies either in $L^-_p$ or $L^+_p$.
    Without loss of generality,
    assume $q \in L^-_p$.
    Since $L_q$ is the boundary of $L_q^+$,
    it follows that 
    $L_p^- \cap L_q^+ \cap \Omin$ is a non-empty open set.
    Consider a small unstable curve
    $J$ in this set.
    Then $f^n(J) \subset \Ku$ and
    \[
        f^n(J) \cap
        \bigl( K^+(f^n(p)) \cup K^-(f^n(q)) \bigr)
        = \varnothing
    \]
    for all large $n$.
    The assumption that
    $|\pi^u f^n(p) - \pi^u f^n(q)|$
    is uniformly bounded
    implies that the length of
    $\pi^u f^n(J)$ is uniformly bounded.
    \Cref{prop:segubound} again gives a contradiction.
      \end{proof}
\begin{proof}
    [Proof of \cref{prop:csH}]
    By the properties of the semiconjugacy,
    \begin{align*}
        \Hu(p) = \Hu(q)& \quad \Leftrightarrow \quad 
        \sup_{n \ge 0}  |\piu A^n H(p) - \piu A^n H(q)| < \infty \\
        & \quad \Leftrightarrow \quad 
        \sup_{n \ge 0}  |\piu f^n(p) - \piu f^n(q)| < \infty.
    \end{align*}
    Since $f^n(p)$ and $f^n(q)$ are in $K^u$ for all large $n$,
    the result follows from \cref{prop:csbdd}.
\end{proof}
This proof was conditional on the conclusion of \cref{lemma:sgrow1}
and therefore we have only established \cref{prop:csH}
in the case where $\delcs  \ne  \del \Om$.
The next section gives a replacement for \cref{lemma:sgrow1}
in the case where $\delcs = \del \Om$ and will therefore finish the proof
of \cref{prop:csH}.

\section{Finding a hidden torus} \label{sec:hiddentorus} %

The goal of this section is to prove the following.

\begin{lemma} \label{lemma:sgrow2}
    If $\delcs = \del \Om$, then
    for any $c > 0$,
    there is $L  \in  \Fcs$ and $p$, $q  \in  \Ku \cap L$ such that
    $|\pis(p) - \pis(q)| > c$.
\end{lemma}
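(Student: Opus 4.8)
The plan is to argue by contradiction and locate a ``hidden'' torus. Suppose the conclusion fails, so there is $c>0$ with $|\pis(p)-\pis(q)|\le c$ for every $L\in\Fcs$ and all $p,q\in\Ku\cap L$. Since $\delcs=\del\Om$, the bundle $\Eu$ is transverse to both boundary components of $\Om$, so $\Ku$ is a closed subset of the interior $\Omin$; recall as well that $f(\Ku)\subset\Ku$ and that every compact subset of $\Omin$ lies in $\Ku$ after sufficiently many forward iterates. The aim is to produce an embedded surface, tangent to $\Ecs$ or to $\Ecu$, which meets $\Omin$ and descends to a torus in $M$; this contradicts the defining property of $\Om$ and proves the lemma.

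The first step is to upgrade the hypothesis to a uniform diameter bound on the sets $\Ku\cap L$. Let $\pi=a\piu+b\pis$ be the functional supplied by \cref{prop:linfoln}, so $|\pi(p)-\pi(q)|<1$ on every $\Ku\cap L$. Running the iteration argument from the proof of \cref{prop:csbound} --- using $f(\Ku)\subset\Ku$ and that $f$ is a finite distance from $A\times\id$ --- gives that $\piA$ is also uniformly bounded on each $\Ku\cap L$. If $a\ne0$ then $\pi$ and $\pis$ are linearly independent, so $\piu$ is a fixed linear combination of them and is uniformly bounded on each $\Ku\cap L$; with $I$ compact this produces a constant $D$ with $\diam(\Ku\cap L)\le D$ for all $L\in\Fcs$. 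The degenerate case $a=0$, in which \cref{prop:linfoln} merely reasserts the bound on $\pis$ and yields no control of $\piu$, must be excluded separately: here one can apply the appendix structure result instead to the branching foliation $\Fcu$ --- whose analogue of $\Ku$ is all of $\Om$, since $\delcu=\varnothing$ --- and compare that functional with $\pi$. Disposing of this degenerate case cleanly is the first delicate point.

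Granting $\diam(\Ku\cap L)\le D$ for all $L\in\Fcs$, one also has $\diam\bigl(f^{n}(\Ku\cap L)\bigr)\le D$ for every $n\ge0$, because $f^{n}(\Ku)\subset\Ku$ and $f^{n}(L)\in\Fcs$. Using that $\Hu\circ f=\lam\,\Hu$ (with $\lam$ the unstable eigenvalue of $A$) and that $H$ is a finite distance from the identity, this forces $|\lam|^{n}|\Hu(p)-\Hu(q)|$ to stay bounded in $n$, hence $\Hu(p)=\Hu(q)$, for all $p,q\in\Ku\cap L$. In particular no stable leaf and no center curve can lie entirely in $\Ku$, since it would then be a complete curve contained in a bounded set; so each stable leaf meeting $\Ku$ must leave $\Ku$ and approach $\del\Om$, touching at most one boundary component by \cref{lemma:stabonce}.

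It remains --- and this is the step I expect to be the main obstacle --- to turn the uniform width bound into a torus inside $\Omin$. Fix $p\in\Omin$ and a leaf $L\in\Fcs$ through $p$. For large $n$ one has $f^{n}(p)\in\Ku$; translating $f^{n}(L)$ by the element $z_n\in Z_0$ that returns $f^{n}(p)$ to a fixed compact fundamental domain $\mathcal D$ for the $Z_0$--action on $\Om$ yields leaves $\hat L_n\in\Fcs$ whose marked points lie in $\mathcal D\cap\Ku$ and whose $\Ku$--parts have diameter $\le D$. By property (3) in the definition of a branching foliation, together with the uniform geometry of surfaces tangent to the continuous bundle $\Ecs$, pass to a subsequential limit $L_\infty\in\Fcs$; it meets $\Ku\subset\Omin$ and still satisfies $\diam(\Ku\cap L_\infty)\le D$. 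To see that $L_\infty$ is periodic, track the translations: the element relating $f(\hat L_n)$ and $\hat L_{n+1}$ equals $z_{n+1}-Az_n$, which is bounded and hence eventually constant along a subsequence, so in the limit $f(L_\infty)=\gam_w(L_\infty)$ for a fixed $w\in Z_0$ and $L_\infty$ is invariant under the lift $\gam_w^{-1}\circ f$. Combining this invariance with the bound on $\Ku\cap L_\infty$ and the forward invariance of $\Ku$ under both $f$ and $Z_0$, one must then promote $L_\infty$ to a surface invariant under a finite--index subgroup of $Z_0$ and show that it stays at positive distance from $\del\Om$; the resulting quotient is an embedded torus tangent to $\Ecs$ meeting $\Omin$ --- orientability excludes a Klein bottle --- contradicting the choice of $\Om$. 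Hence the hypothesis is untenable and the lemma holds.
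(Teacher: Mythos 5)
Your overall strategy (argue by contradiction and exhibit a ``hidden'' compact $cs$-submanifold violating the choice of $\Om$) is indeed the paper's strategy, but the execution has genuine gaps, and you miss the one structural move that makes the paper's proof go through. The paper does not attack $\Fcs$ and $\Ku$ directly when $\delcs=\del\Om$. It first observes that the statement is equivalent, after replacing $f$ by $f\inv$, to a statement about $\Fcu$, $\Ks$ and $\piu$ in the case $\delcs=\varnothing$. The point of this flip is that when $\delcs=\varnothing$ all of \cref{sec:csfiber} (in particular \cref{prop:csH} and \cref{prop:uscunique} applied to fibers of $\Hu$) is already available for $\Fcs$, and these facts are then used to control $\Fcu$: one shows that $|\Hu(x)-\Hu(y)|$ grows under iteration along unstable segments, so that unstable (after the flip, stable) segments contained in the ``thick'' set have uniformly bounded length. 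Working directly with $\Fcs$ when $\delcs=\del\Om$, as you do, you cannot invoke \cref{prop:csH} or the conclusion of \cref{prop:csbound} for $\Fcs$ --- they are exactly what \cref{lemma:sgrow2} is needed to establish --- and your proposed escape from the degenerate case $a=0$ of \cref{prop:linfoln} (applying the appendix to $\Fcu$ and ``comparing functionals'') does not work: the appendix gives an a priori unrelated functional for $\Fcu$, and no comparison between the two is available at this stage.

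The second, and larger, gap is the final step. Producing a torus from a width bound is the real content of the lemma, and the paper does not do it by hand: it builds an explicit deck-invariant function $g:\Om\to[0,1]$ whose intermediate level region lies in $\Ks$ (using the bounded length of stable segments there and \cref{lemma:stabonce}), quotients to $M_0\subset M$, and invokes the external result \cite[Theorem 2.5]{ham20XXprox} to obtain a compact $cs$-submanifold in the interior. Your substitute --- taking limits of translated iterates $f^n(L)-z_n$ --- is not a proof as written: from boundedness of $z_{n+1}-Az_n$ you can only extract a subsequence $n_k$ along which it is constant, but then $f(\hat L_{n_k})$ and $\hat L_{n_k+1}$ involve mismatched indices, so the limit leaf need not satisfy $f(L_\infty)=\gam_w(L_\infty)$; and even granting an $f$-periodic leaf, the ``promotion'' to a leaf invariant under a finite-index subgroup of $\bbZ^2$ (which is what is needed for the quotient to be a torus) and the claim that it stays a positive distance from $\del\Om$ are asserted, not proved. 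These are precisely the difficulties that the citation to \cite{ham20XXprox} is there to handle.
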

Note that up to replacing $f$ with $f \inv$, \cref{lemma:sgrow2} is equivalent
to the following.

\begin{lemma} \label{lemma:sgrowinv}
    If $\delcs = \varnothing$, then
    for any $c > 0$,
    there is $L  \in  \Fcu$ and $x$, $y  \in  \Ks \cap L$ such that
    $|\piu(x) - \piu(y)| > c$.
\end{lemma}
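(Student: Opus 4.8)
The plan is to work with the hypothesis $\delcs = \varnothing$, so that both boundary components of $\Om$ are tangent to $\Ecu$ and, by the discussion following the definition of $\Ku$, we have $\Ku = \Om$. I must produce a leaf $L \in \Fcu$ whose intersection with $\Ks$ has arbitrarily large diameter in the $\piu$ direction. The strategy mirrors the proof of \cref{lemma:sgrow1}, but the difficulty is that in that lemma the large-diameter leaf came from a boundary torus that was tangent to $\Ecs$ (the ``wrong'' direction for $\Fcs$), whereas here both boundary components are tangent to $\Ecu$, which is the ``right'' direction for $\Fcu$ — so there is no immediate reason the $cu$-leaves through the boundary should be unbounded in the unstable direction. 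This is exactly the situation where a ``hidden torus'' must be found: a leaf of $\Fcu$ that closes up (modulo $\bbZ^2$) in the unstable direction and thereby produces a $cu$-torus that was not one of the boundary tori.

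The key steps, in order, would be the following. First, suppose for contradiction that there is a uniform bound $c_0$ with $|\piu(x) - \piu(y)| \le c_0$ for all $x,y$ lying on a common leaf of $\Fcu$ and in $\Ks$ — but since $\delcs = \varnothing$ forces $\Ks$ to be a proper subset (the analogue of $\Ku$ for $f\inv$), I should first note that a boundary component $S$ of $\Om$ is tangent to $\Ecu$, so by \cref{prop:uniqueinttwo} (applied with $g = f|_S$ and the splitting $T\bbT^2 = \Ec \oplus \Es$ there) each leaf of $\Fcu$ meeting $S$ contains a whole center leaf $\Wc_g$ inside $S$. By \cref{prop:ctou}, such center leaves are unbounded; this already gives leaves of $\Fcu$ that are unbounded \emph{along} $S$. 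The issue is that these points on $S$ may not lie in $\Ks$. So the real work is to push the argument into the interior: using that $f\inv$ contracts along the stable direction and that points of any compact subset of $\Omin$ eventually enter $\Ks$ under forward iteration of $f\inv$ (i.e.\ backward iteration of $f$), I would transport a long center segment inside $S$ into $\Ks$ by applying $f\invn$, invoking invariance of $\Fcu$ under $f$ and the fact that $f\inv$ is at finite distance from $A\inv \ti \id$. Tracking how $\piu$ behaves — $A\inv$ expands the $\ker\pis$ direction — I would show the $\piu$-diameter of the iterated segment grows without bound, contradicting the assumed bound $c_0$.

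Alternatively, and perhaps more robustly, I would argue by the dichotomy that either $\Fcu$-leaves are unbounded in the $\piu$ direction within $\Ks$ (which is what we want), or else every $\Fcu$-leaf is $\piu$-bounded on $\Ks$; in the latter case, combined with \cref{prop:linfoln} (the analogue for $\Fcu$, obtained by applying the appendix result to $f\inv$), one concludes that $\Fcu$-leaves restricted to $\Ks$ are bounded in both the $\piu$ and $\pis$ directions, hence have bounded diameter in $\Om/\bbZ^2$, which forces a compact leaf — a torus tangent to $\Ecu$ — lying in the interior of $\Om$, contradicting the defining property of $\Om$ that no surface meeting its interior quotients to a $cs$ or $cu$-torus. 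The main obstacle, as I see it, is making the ``push into the interior'' quantitative: one needs that iterating a long center segment from the boundary torus into $\Ks$ does not cause the $\piu$-extent to collapse, and this requires carefully combining the monotonicity of $\Hu$ along center leaves (\cref{prop:monotonic}, applied on $S$) with the finite distance between $f$ and $A \ti \id$ and the expansion of $A$. Once that estimate is in hand, the contradiction with the assumed uniform bound — and hence \cref{lemma:sgrowinv}, equivalently \cref{lemma:sgrow2} — follows.
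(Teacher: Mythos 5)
Your high-level strategy---argue by contradiction and locate a ``hidden torus'' inside $\Omin$---is exactly the paper's (the relevant section is even titled ``Finding a hidden torus''), but neither of your two routes actually closes the argument. The first route has a fatal flaw: a boundary component $S$ tangent to $\Ecu$ is $f$-invariant and, by the very definition of $\Ks$, disjoint from $\Ks$; so applying $f\invn$ to a long center (or unstable) segment inside $S$ keeps it inside $S$ forever and it never enters $\Ks$. There is no way to ``transport'' the unboundedness of $\piu$ on $S$ into the interior this way, and this is precisely why the case $\delcs = \del\Om$ (equivalently $\delcs=\varnothing$ after inverting) is hard: the easy mechanism of \cref{lemma:sgrow1} used the \emph{transversality} of $\Fcs$-leaves with a $cu$-boundary torus, and there is no analogue here since $\Fcu$-leaves and $S$ are tangent to the same plane field. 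The second route also does not work as stated: bounding $L\cap\Ks$ in both the $\piu$ and $\pis$ directions does not bound $L$ itself, because $L\sans\Ks$ (the part of the leaf within stable distance $1$ of $\del\Om$) can be, and generally is, unbounded---again $S$ itself is the model example---so no compact $cu$-leaf is forced.

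What the paper actually does with the contradiction hypothesis is quite different. From the assumed uniform bound $|\piu(x)-\piu(y)|<D_0$ on $\Ks\cap L$ it deduces $|\Hu(x)-\Hu(y)|<D$ there; it then proves (using \cref{prop:csH} for $\Fcs$, which is already available because $\delcs=\varnothing$, together with a compactness argument and the relation $\Hu f=\lam\Hu$) that any unstable segment in $\Omin$ of length greater than some $\ell$ changes $\Hu$ by more than $D$. Hence $\Ks$ meets each unstable leaf in a set of unstable diameter at most $\ell$. This makes $\Ks$ ``thin'' enough to build a deck-invariant continuous function $g:\Om\to[0,1]$ with $g(\del\Om)=\{0,1\}$ and $\{0<g<1\}\subof\Ks$, and the contradiction is then delivered by an external result (Theorem 2.5 of \cite{ham20XXprox}) which produces from $g$ a compact $cs$-submanifold in the interior---note, a $cs$-torus, not a $cu$-torus---contradicting the defining property of $\Om$. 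That cited theorem is the real engine of the proof, and your proposal contains no substitute for it.
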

Here, $\Ks$ is the set of points at distance at least 1
from $\del \Om$ where distance is measured along the stable direction.
The advantage in proving \cref{lemma:sgrowinv} in place of \cref{lemma:sgrow2} is that,
since $\delcs = \varnothing$, all of the results of the previous section
are known to hold for $\Fcs$ and we may use those properties of $\Fcs$
when proving results for $\Fcu$.

We prove \cref{lemma:sgrowinv} by contradiction.
Therefore,
for the remainder of the section,
assume that $\del \Om$ is tangent to $\Ecu$,
that $\Fcu$ is a branching foliation tangent to $\Ecu$,
and that there is $D_0 > 0$ such that
if $L  \in  \Fcu$ and $x$, $y  \in  \Ks \cap L$, then
$|\piu(x) - \piu(y)| < D_0$.
We will use a result in \cite{ham20XXprox}
to establish the existence of a surface lying in $\Omin$
which quotients down to a $cs$-torus in the original
compact 3-manifold $M$.
Since $\Om$ was chosen so that no such torus exists,
this will provide the needed contradiction.

\begin{lemma}
    There is $D > 0$ such that
    if $L  \in  \Fcu$ and $x$, $y  \in  \Ks \cap L$, then
    $|\Hu(x) - \Hu(y)| < D$.
\end{lemma}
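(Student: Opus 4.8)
The plan is to deduce this immediately from the standing assumption of the section together with the estimate $\|H(p) - v\| < C$ from \cref{prop:semiconj}(3). The key observation is that $\Hu = \piu \circ H$ lies at a bounded distance from $\piu$ once both are regarded as maps $\Om \to \bbR$. Writing $p = (v,s) \in \bbR^2 \ti I$ and letting $\|\piu\|$ denote the operator norm of the linear functional $\piu : \bbR^2 \to \bbR$, one has
\[
    |\Hu(p) - \piu(p)| = |\piu(H(p)) - \piu(v)| \le \|\piu\| \cdot \|H(p) - v\| < \|\piu\| \cdot C .
\]

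With this estimate in hand, I would simply apply the triangle inequality. For any $L \in \Fcu$ and $x, y \in \Ks \cap L$,
\[
    |\Hu(x) - \Hu(y)| \le |\Hu(x) - \piu(x)| + |\piu(x) - \piu(y)| + |\piu(y) - \Hu(y)|
    < 2\|\piu\| C + |\piu(x) - \piu(y)| .
\]
By the assumption made at the start of the section, $|\piu(x) - \piu(y)| < D_0$, so the lemma holds with $D = D_0 + 2\|\piu\| C$.

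There is no genuine obstacle here: the statement is a routine transcription of the standing hypothesis on $\piu$ across the semiconjugacy, using only that $H$ displaces points a uniformly bounded amount. The one point requiring a little care is the mild abuse of notation by which $\piu$ denotes both a linear functional on $\bbR^2$ and the map $(v,s) \mapsto \piu(v)$ on $\Om$; once that identification is made explicit, the computation above is immediate.
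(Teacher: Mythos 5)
Your proof is correct and is exactly the argument the paper uses: the paper's one-line proof invokes the standing $D_0$ bound together with the fact that $\Hu$ and $\piu$ are at finite distance, which is precisely the estimate you derive from \cref{prop:semiconj}(3). You have simply made the constants explicit.
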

\begin{proof}
    This follows from the inequality with $D_0$ above
    and the fact that $\Hu$ and $\piu$ are at finite distance.
\end{proof}
\begin{lemma} \label{lemma:xyD}
    There is $\ell > 0$ such that
    if $x  \in  \Omin$, $y  \in  \Wu(x)$, and $d_u(x,y) > \ell$,
    then $|\Hu(x) - \Hu(y)| > D$.
\end{lemma}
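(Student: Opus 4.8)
The plan is to exploit that $\Hu$ is strictly monotonic along any unstable segment contained in $\Omin$; this promotes the interior estimate of \cref{prop:segubound} to the desired estimate on the endpoints of such a segment.

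First I would check that for $x \in \Omin$ the whole unstable leaf $\Wu(x)$ lies in $\Omin$. Both components of $\del \Om$ are tangent to $\Ecu$, hence to $\Eu$, and they are $f$-invariant; since $\Wu$ is the unique foliation tangent to $\Eu$, the unstable leaf through any point of such a component is contained in that component, so an unstable leaf that meets $\Omin$ cannot meet $\del \Om$. Next I would show that $\Hu$ is injective on every unstable leaf inside $\Omin$: if $p$ and $q$ lie on a common unstable leaf in $\Omin$ with $\Hu(p) = \Hu(q)$, then \cref{prop:csH} --- which applies here since $\delcs = \varnothing$ in this section, so that all results of \cref{sec:csfiber} are available --- provides a leaf $L \in \Fcs$ with $p, q \in L$, while \cref{prop:uscunique} says such a leaf meets an unstable leaf in at most one point, forcing $p = q$. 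Since $\Hu = \piu \circ H$ is continuous, it is therefore strictly monotonic along every compact unstable segment contained in $\Omin$.

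To conclude, fix $D > 0$. As $H$ is a finite distance from the identity and $\piu$ is linear, there is a constant $C' > 0$ with $|\Hu(p) - \piu(p)| < C'$ for all $p \in \Om$. Applying \cref{prop:segubound} with the constant $D + 2C'$ gives $\ell > 0$ such that any unstable curve $J \subset \Om$ of length at least $\ell$ contains points $p$ and $q$ with $|\piu(p) - \piu(q)| > D + 2C'$, and hence $|\Hu(p) - \Hu(q)| > D$. Now, given $x \in \Omin$, $y \in \Wu(x)$ with $d_u(x,y) > \ell$, the unstable segment $J$ joining $x$ to $y$ lies in $\Omin$ by the first step and has length $d_u(x,y) > \ell$, so it contains such a pair $p, q$; monotonicity of $\Hu$ along $J$ forces $\Hu(p)$ and $\Hu(q)$ to lie between $\Hu(x)$ and $\Hu(y)$, whence $|\Hu(x) - \Hu(y)| \ge |\Hu(p) - \Hu(q)| > D$.

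The substantive point is the monotonicity of $\Hu$ along unstable leaves; the only step requiring care is verifying that the relevant unstable segment stays inside $\Omin$, which is what makes \cref{prop:csH} applicable there.
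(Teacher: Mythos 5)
Your argument is correct, but it follows a different route from the paper's. The paper fixes the set $X$ of pairs of points on a common unstable leaf at $d_u$-distance between $1$ and some uniform $C$, uses \cref{prop:csH} and \cref{prop:uscunique} to see that $\Hu(x)-\Hu(y)\ne 0$ on $X$, quotients $X$ to a compact set to extract a uniform lower bound $\delta>0$, and then amplifies this bound by iterating forward via the relation $\Hu f = \lam \Hu$; choosing $n$ with $\lam^n\delta > D$ and $\ell$ so that $d_u(x,y)>\ell$ forces $d_u(f^{-n}x,f^{-n}y)>1$ finishes the proof, with no monotonicity needed since the same pair $(x,y)$ is tracked throughout. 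You instead combine \cref{prop:segubound} (which already encodes the growth of $\piu$-displacement along long unstable curves) with the observation that $\Hu$ is injective, hence strictly monotone, along unstable segments in $\Omin$ — essentially the content of \cref{lemma:uhomeo}, which the paper only proves later but whose proof uses nothing beyond what is available here, so there is no circularity. Your preliminary step, that $\Wu(x)\subof\Omin$ for $x\in\Omin$ because the boundary components are tangent to $\Ecu$ and hence saturated by unstable leaves, is needed for the monotonicity to apply to the whole segment from $x$ to $y$ and is correct under this section's standing assumption $\delcs=\varnothing$. The trade-off is that your proof leans on \cref{prop:segubound} (whose proof is itself an iteration/volume argument deferred to \cite{ham-nil}) plus monotonicity, whereas the paper's proof is self-contained modulo compactness and the eigenvalue relation; both yield the uniform $\ell$.
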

\begin{proof}
    Let $C > 1$ be such that
    for any $x  \in  \Om$ and $y  \in  \Wu(x)$,
    there is an integer $k$ such that
    $1  \le  d_u(f^k(x),f^k(y))  \le  C$.
    Define
    \[
        X =
        \{ (x,y) : x  \in  \Om, \, y  \in  \Wu(x), \text{ and } 1  \le  d_u(x,y)  \le  C\}.
    \]
    Under the assumptions of the current section,
    $\delcs = \varnothing$
    and therefore the last section shows that the conclusions of
    \cref{prop:csH} hold for $\Fcs$.
    As such, \cref{prop:uscunique} implies that
    $\Hu(x) - \Hu(y)$ is non-zero for all $(x,y)  \in  X$.
    As $X$ may be quotiented down to a compact set,
    there is $\delta > 0$ such that
    $|\Hu(x) - \Hu(y)| > \delta$ for all $(x,y)  \in  X$.

    The semiconjugacy relation $H f = A H$
    implies that
    $\Hu(f(x)) = \lam \Hu(x)$
    where $\lam > 1$ is the unstable eigenvalue of $A$.
    Choose $n$ such that $\lam^n \delta > D$.
    Then
    $|\Hu(x) - \Hu(y)| > \delta$
    implies
    $|\Hu f^n(x) - \Hu f^n(y)| > \lam^n \delta > D$.
    To conclude the proof, choose $\ell > 0$
    so that
    $d_u(x,y) > \ell$ implies
    $d_u(f^{-n}(x), f^{-n}(y)) > 1$.
\end{proof}
\begin{cor}
    If $x  \in  \Ks$, $y  \in  \Wu(x)$, and $d_u(x,y) > \ell$,
    then $y \notin \Ks$.
\end{cor}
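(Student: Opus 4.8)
The corollary asserts that if $x \in \Ks$, $y \in \Wu(x)$, and $d_u(x,y) > \ell$, then $y \notin \Ks$. Here $\Ks$ is the set of points at distance at least 1 from $\del\Om$ measured along the stable direction, and $\ell$ is the constant produced by the preceding Lemma~\ref{lemma:xyD}.

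The plan is a short argument by contradiction combining Lemma~\ref{lemma:xyD} with the working hypothesis of the section. First I would suppose, for contradiction, that $x \in \Ks$, $y \in \Wu(x)$, $d_u(x,y) > \ell$, and $y \in \Ks$ as well. Since $\Ks \subof \Om$ and the hypothesis of the section guarantees $\del\Om$ is tangent to $\Ecu$, every point of $\Ks$ in fact lies in the interior $\Omin$ (because $\Es$ is transverse to $\del\Om$, so a point at stable-distance at least $1$ from the boundary cannot be on the boundary). In particular $x \in \Omin$, and the hypotheses of Lemma~\ref{lemma:xyD} are met: $x \in \Omin$, $y \in \Wu(x)$, and $d_u(x,y) > \ell$. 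Lemma~\ref{lemma:xyD} then yields $|\Hu(x) - \Hu(y)| > D$.

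On the other hand, $x$ and $y$ both lie in $\Ks$ and both lie on the unstable leaf $\Wu(x)$. By Proposition~\ref{prop:ssat} applied to $\Fcu$ (or rather its analogue for center-unstable leaves: each leaf of $\Fcu$ is saturated by unstable leaves), the leaf $L \in \Fcu$ through $x$ contains all of $\Wu(x)$, hence contains $y$. Thus $x, y \in \Ks \cap L$ for some $L \in \Fcu$, and the standing assumption of the section — that $|\Hu(x') - \Hu(y')| < D$ for all $x', y' \in \Ks \cap L$ with $L \in \Fcu$, as recorded in the preceding unnamed lemma — gives $|\Hu(x) - \Hu(y)| < D$. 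This contradicts the bound $|\Hu(x) - \Hu(y)| > D$ obtained above, so the assumption $y \in \Ks$ is impossible.

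I do not expect any real obstacle here; the corollary is essentially an immediate juxtaposition of Lemma~\ref{lemma:xyD} with the contradiction hypothesis of the section. The only point requiring a moment of care is confirming that points of $\Ks$ lie in $\Omin$ so that Lemma~\ref{lemma:xyD} applies — and that every leaf of $\Fcu$ is unstable-saturated so that $x$ and $y$ share a leaf — but both facts are standard consequences of the setup (transversality of $\Es$ to the $cu$-boundary, and the $\Fcu$ analogue of Proposition~\ref{prop:ssat}).
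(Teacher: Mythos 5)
Your proof is correct and is exactly the paper's argument, which is stated in one line as ``otherwise the results above give $D < |\Hu(x) - \Hu(y)| < D$''; you have merely made explicit the two implicit checks (that $\Ks \subof \Omin$ so \cref{lemma:xyD} applies, and that $x$ and $y$ share a leaf of $\Fcu$ by unstable saturation so the preceding lemma applies).
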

\begin{proof}
    Otherwise, the results above give
    $D < |\Hu(x) - \Hu(y)| < D$.
\end{proof}
\begin{lemma}
    There is a continuous function $g$ : $\Om  \to  [0,1]$ such that
    \begin{enumerate}
        \item $g$ is invariant under deck transformations;

        \item $g(\del \Om) = \{0,1\}$; and

        \item if $0 < g(x) < 1$, then $x  \in  \Ks$.
    \end{enumerate}  \end{lemma}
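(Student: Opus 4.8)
The plan is to exhibit $\Om\sans\Ks$ as a disjoint union of two open ``stable collars'' of the two boundary components of $\Om$, and then to apply Urysohn's lemma equivariantly.

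Since we are in the case $\delcs=\varnothing$, both components $T_0$ and $T_1$ of $\del\Om$ are tangent to $\Ecu$, hence $\Es$ is transverse to $\del\Om$ and every stable leaf that meets $\del\Om$ does so transversally. For $j\in\{0,1\}$, let $N_j\subof\Om$ be the set of points $x$ such that $\Ws(x)$ contains a point of $T_j$ at along-leaf distance less than $1$ from $x$. Directly from the definition of $\Ks$, a point of $\Om$ fails to lie in $\Ks$ exactly when its stable leaf reaches $\del\Om$ within distance $1$; and since \cref{lemma:stabonce} says no stable leaf meets both $T_0$ and $T_1$, this yields $\Om\sans\Ks=N_0\cup N_1$ with $N_0\cap N_1=\varnothing$. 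Moreover $T_j\subof N_j$ (take the point $x$ itself), and each $N_j$ is invariant under any deck transformation fixing $\Om$, since $\Ws$ and $\del\Om$ are.

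Next I would establish two topological facts. First, each $N_j$ is open: if $x\in N_j$ then $\Ws(x)$ crosses $T_j$ transversally at a point lying at distance strictly less than $1$ from $x$, so by continuity of the local stable foliation every sufficiently nearby stable leaf also crosses $T_j$ at a point still at distance less than $1$, placing an open neighbourhood of $x$ inside $N_j$. Second --- and this is the heart of the matter --- $\overline{N_0}\cap\overline{N_1}=\varnothing$. To see this, take $x\in\overline{N_0}$, choose $x_n\to x$ with $x_n\in N_0$, and pick $q_n\in\Ws(x_n)\cap T_0$ with $d_s(x_n,q_n)<1$; then $q_n$ lies in the ball of radius $1$ about $x_n$ inside $\Ws(x_n)$. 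Since these balls vary continuously with the base point and $T_0$ is closed, a subsequential limit $q$ of $\{q_n\}$ lies in $\Ws(x)\cap T_0$. Hence $x\in\overline{N_0}$ forces $\Ws(x)\cap T_0\ne\varnothing$, and symmetrically $x\in\overline{N_1}$ forces $\Ws(x)\cap T_1\ne\varnothing$; a point in both closures would therefore lie on a stable leaf meeting both boundary components, contradicting \cref{lemma:stabonce}. I expect this convergence argument --- and in particular making sure the limiting stable arc really lies in the single leaf $\Ws(x)$ --- to be the only delicate point; the rest is routine.

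Finally, $\overline{N_0}$ and $\overline{N_1}$ are disjoint, closed, and deck-invariant subsets of $\Om$, so they descend to disjoint closed subsets of $M_i$, the quotient of $\Om$ by its deck group, which is a compact manifold. Urysohn's lemma on $M_i$ gives a continuous map $\bar g\colon M_i\to[0,1]$ that is $0$ on the image of $\overline{N_0}$ and $1$ on the image of $\overline{N_1}$, and I would take $g$ to be the pullback of $\bar g$ along the covering projection $\Om\to M_i$. By construction $g$ is continuous and deck-invariant; since $T_0\subof N_0$ and $T_1\subof N_1$ we have $g\equiv 0$ on $T_0$ and $g\equiv 1$ on $T_1$, so $g(\del\Om)=\{0,1\}$; and if $0<g(x)<1$ then $x\notin N_0\cup N_1=\Om\sans\Ks$, i.e.\ $x\in\Ks$. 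This establishes all three required properties.
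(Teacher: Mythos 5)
Your proposal is correct and follows essentially the same route as the paper: both define stable collars of the two boundary components, observe they are disjoint by \cref{lemma:stabonce}, and interpolate with a Urysohn-type function. The only cosmetic difference is that the paper uses closed collars $K_i=\{x\in\Ws(y)\cap\Om : y\in S_i,\ d_s(x,y)\le 1\}$ and the explicit formula $g(x)=\dist(x,K_0)/(\dist(x,K_0)+\dist(x,K_1))$, whereas you use open collars, verify their closures are disjoint, and invoke Urysohn's lemma on the compact quotient.
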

\begin{proof}
    Let $S_0$ and $S_1$ be the two boundary components of $\Om$.
    Define
    \[
        K_i =
        \{ x  \in  \Ws(y) \cap \Om : y  \in  S_i, \,  d_s(x,y)  \le  1 \}.
    \]
    By \cref{lemma:stabonce},
    no stable manifold intersects both $S_0$ and $S_1$
    and therefore $K_0$ and $K_1$ are disjoint.
    Define
    \[
        g(x) = 
        \frac{\dist(x,K_0)}{\dist(x,K_0)+\dist(x,K_1)}.\qedhere
    \]  \end{proof}
Now $\Om$ quotients down to a subset $M_0 \subof M$
and $g$ quotients down to a function from $M_0$ to [0,1].
Applying \cite[Theorem 2.5]{ham20XXprox},
there is a compact $cs$-submanifold in the interior of $M_0$.
This contradicts the assumptions on $\Om$ given in \cref{sec:defOm}
and completes the proof of \cref{lemma:sgrowinv}.
Since the two statements are equivalent,
this also proves \cref{lemma:sgrow2}.
Now,
substituting \cref{lemma:sgrow2} in place of \cref{lemma:sgrow1}
in the previous section, one sees that \cref{prop:csH}
holds in full generality.

\section{Fibers of the semiconjugacy} \label{sec:fibers} %

This section gives the proofs of propositions \ref{prop:orient}
and \ref{prop:fiber}.
Let $f$, $\Om$, and $H$
be as in \cref{sec:defOm}.
Recall that $\delcs$ is the union of those boundary components of $\Om$
which are tangent to $\Ecs$.

\begin{lemma} \label{lemma:notwice}
    An unstable curve intersects
    $\delcs$ in at most one point.
\end{lemma}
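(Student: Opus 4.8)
\textbf{Proof plan for \cref{lemma:notwice}.}
The plan is to argue by contradiction, using that $\delcs$ is tangent to $\Ecs$ and hence saturated by stable leaves, while an unstable curve can meet a stable leaf at most once and, more to the point, an $\Ecs$-tangent boundary component behaves like a center-stable leaf. First I would suppose an unstable curve $J \subof \Om$ meets $\delcs$ in two distinct points $p$ and $q$. By restricting attention to a single boundary component of $\Om$, we may assume $p$ and $q$ lie on the same component $S$ of $\delcs$, since distinct components of $\del \Om$ are disjoint (they quotient down to disjoint $cs$-tori). Thus $S$ is a properly embedded surface tangent to $\Ecs$, and because the stable foliation subordinates $\Ecs$, the surface $S$ is saturated by the stable leaves it contains.

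Next I would exploit the dynamics. Since $f$ maps $\Om$ to itself and $S$ to itself (after passing to the iterate fixed in \cref{sec:defOm}), and $f$ expands the unstable direction, the length of $f^n(J)$ along the unstable leaf grows without bound as $n \to \infty$, while $f^n(p)$ and $f^n(q)$ remain in $S$. This puts two points of $S$ arbitrarily far apart along a single unstable leaf. The contradiction I want is geometric: I would lift to the universal cover (or use the description of $\Om = \bbR^2 \ti I$ together with the branching foliation $\Fcu$) and observe that a leaf of $\Fcu$ through $p$ must, on the one hand, contain the unstable curve through $p$ — since $\Ecu$-leaves are $u$-saturated — and on the other hand, since $S$ is a $cs$-boundary component, the unstable curve leaves $\Om$ on one side at $p$. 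Concretely, at a point of $S$ the unstable direction $\Eu$ points transversally out of $\Om$ on one side; an unstable curve through $p \in S$ therefore immediately exits $\Om$ and cannot return to $S$ without re-entering $\Om$, contradicting that $J \subset \Om$ with both endpoints $p, q$ on $S$.

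The cleanest route, and the one I would write up, is the second half of the previous paragraph made precise: near a point of $S = \delcs$, the surface $S$ is locally the graph over a $cs$-plaque and $\Eu$ is transverse to $S$, pointing into $\interior \Om$ on exactly one side. So an unstable curve emanating from $p \in S$ enters $\interior \Om$ and, being everywhere tangent to $\Eu$, can only meet $S$ again if it becomes tangent to $\Eu(S\text{-direction})$ somewhere — impossible, since $\Eu$ is one-dimensional and transverse to $S$ throughout a neighbourhood of $S$, and globally no unstable leaf can cross $S$ twice because that would force a local return contradicting transversality together with the fact (\cref{lemma:stabonce}-type reasoning) that unstable segments shrink under backward iteration below the injectivity scale of the transverse neighbourhood of $S$. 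I expect the main obstacle to be making the "enters on one side and cannot come back" argument airtight when $S$ is only $C^0$ and the unstable curve might wander far before returning; the fix is the standard one — apply $f^{-n}$ to shrink the returning arc of $J$ into a fixed tubular neighbourhood of $S$ on which $\Eu$ is genuinely transverse to $S$, at which scale the topological obstruction (a curve transverse to a separating surface, leaving on one side, cannot return to it) is immediate.
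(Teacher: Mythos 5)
Your final paragraph is essentially the paper's proof: pull the offending unstable arc back by $f^{-n}$ until it is shorter than the scale at which the uniform transversality of $\Eu$ and $\Ecs$ forbids a small unstable segment from joining two points of the same center-stable surface. One correction: the reduction to a single boundary component of $\delcs$ does not follow from the components of $\del \Om$ being disjoint --- a priori an unstable segment could run from one component to the other --- it is \cref{lemma:stabonce} that rules this out, and you should cite it there rather than only alluding to ``\cref{lemma:stabonce}-type reasoning'' later. The intermediate discussion of $\Fcu$-saturation is not needed.
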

\begin{proof}
    Suppose $J$ is an unstable segment which intersects
    $\delcs$ at both endpoints.
    By \cref{lemma:stabonce},
    both endpoints must lie on the same boundary component.
    Then for large $n$, $f^{-n}(J)$ would be an arbitrarily small unstable
    curve connecting two points on the same center-stable surface.
    This is ruled out by the
    uniform transversality of $\Ecs$ and $\Eu$.
\end{proof}
\begin{lemma} \label{lemma:uhomeo}
    If $J$ is an unstable curve in $\Om$,
    then
    $\Hu|_J$ is a homeomorphism to its image.
\end{lemma}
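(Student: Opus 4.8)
The plan is to reduce the statement to injectivity of $\Hu$ on an unstable curve, since properness (and hence homeomorphism onto the image, which is an interval in $\bbR$) then follows from \cref{prop:segubound}: that proposition shows $\Hu$ is unbounded on any unstable curve of unbounded length, and combined with monotonicity it forces $\Hu|_J$ to be a proper embedding of $J$ into $\bbR$. So the heart of the matter is to show that if $p, q$ lie on a common unstable curve in $\Om$ with $p \ne q$, then $\Hu(p) \ne \Hu(q)$.

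First I would argue by contradiction: suppose $p \ne q$ lie on an unstable segment $J \subset \Om$ with $\Hu(p) = \Hu(q)$. Since $f$ increases unstable length and $\Om$ is $f$-invariant, after replacing $p,q$ by $f^n(p), f^n(q)$ for large $n$ we may assume $p, q$ (and a subsegment of $J$ joining them) lie in $\Ku$; the relation $\Hu f = \lam \Hu$ shows $\Hu(f^n(p)) = \Hu(f^n(q))$ is preserved, so no generality is lost. Now \cref{prop:csH} applies: $\Hu(p) = \Hu(q)$ for points of $\Omin$ implies $p, q$ lie on a common leaf $L \in \Fcs$. (If $p$ or $q$ lies on a $cs$-boundary component rather than in $\Omin$, one first pushes slightly into the interior along $J$, using continuity of $\Hu$, or treats that degenerate case directly via \cref{lemma:notwice}.) But then $p$ and $q$ are two distinct points on a single unstable curve that also lie on a common center-stable leaf, directly contradicting \cref{prop:uscunique}, which says a leaf of $\Fcs$ meets any unstable leaf in at most one point.

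The main obstacle I anticipate is the interface between the two boundary cases and the interior: \cref{prop:csH} is stated for $p,q \in \Omin$, while an unstable curve $J$ in $\Om$ may have endpoints on $\delcs$, and \cref{lemma:notwice} guarantees $J$ meets $\delcs$ in at most one point, so at most one endpoint of $J$ is problematic. The clean way around this is to note that the open subsegment $J \cap \Omin$ is dense in $J$, apply the injectivity argument above to all pairs in $J \cap \Omin$, and then extend by continuity of $\Hu|_J$ to the (at most one) endpoint on $\delcs$; strict monotonicity on the open part upgrades to injectivity on all of $J$ because a continuous strictly monotone map on a dense subinterval is injective on the closure. Once injectivity is in hand, properness from \cref{prop:segubound} finishes the proof: $\Hu|_J$ is a continuous injection of an interval into $\bbR$, hence monotone, and unbounded on any unbounded piece, so it is a homeomorphism onto its (interval) image.
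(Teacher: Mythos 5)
Your argument for the interior case is essentially the paper's: $\Hu(p)=\Hu(q)$ with $p,q\in\Omin$ forces $p$ and $q$ onto a common leaf of $\Fcs$ by \cref{prop:csH}, contradicting \cref{prop:uscunique}; and your continuity/monotonicity extension to a single endpoint on $\delcs$ matches the paper's second case. Two minor remarks: the detour through $\Ku$ is unnecessary (\cref{prop:csH} is stated for $\Omin$, not for $\Ku$, and iterating forward does not move points off $\del\Om$ anyway), and \cref{prop:segubound} is not needed for the final step, since a continuous injection of an interval into $\bbR$ is automatically strictly monotone and hence a homeomorphism onto its image.

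There is, however, a genuine gap: you have not covered the case where $J$ lies entirely inside a boundary component of $\Om$ tangent to $\Ecu$. Such curves exist whenever $\delcs\ne\del\Om$: on a $cu$-boundary component $S$ the bundle $\Eu$ is tangent to $S$, so the full unstable leaf of any point of $S$ is contained in $S$. For such a $J$ one has $J\cap\Omin=\varnothing$, so your claim that ``$J\cap\Omin$ is dense in $J$'' fails and \cref{prop:csH} gives nothing; note also that \cref{lemma:notwice} only controls intersections with $\delcs$, not with $cu$-boundary components. The paper handles this third case separately by invoking \cref{prop:shomeo} for the two-dimensional dynamics of $f^{-1}$ restricted to $S$ (whose stable direction is $\Eu$), which says exactly that the relevant coordinate of the semiconjugacy restricts to a homeomorphism on each such leaf. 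You would need to add this case, or else restrict your statement to unstable curves meeting $\Omin$.
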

\noindent
Here, the curve $J$ may be bounded or unbounded
and may or may not include its endpoints.

\begin{proof}
    First,
    consider the case where $J$ is in the interior of $\Om$.
    By \cref{prop:csH}, the fibers of $\Hu$ are $cs$-leaves and,
    by \cref{prop:uscunique}, each
    $cs$-leaf intersects an unstable leaf at most once,
    so $\Hu|_J$ is injective.
    Since $\Hu$ is continuous,
    this implies that $\Hu|_J$ is a homeomorphism to its image.

    Note that if $\phi:[0,1) \to \bbR$ is a continuous function
    and $\phi|_{(0,1)}$ is an embedding, then $\phi$ itself must also be an
    embedding. Therefore, in the case where $J$ intersects $\del \Om$
    in a point, the fact that the restriction of $\Hu$ to
    $J \without \del \Om$ is injective
    implies that $\Hu$ is injective on all of $J$.

    The last possibility is if $J$ lies in a component of $\del \Om$
    which is tangent to $\Ecu$.
    This case follows from \cref{prop:shomeo}.
\end{proof}
\begin{proof}
    [Proof of \cref{prop:orient}]
    For a point $x  \in  \Omin$,
    let $J \subof \Omin$ be a short unstable segment
    passing through $x$.
    Using that $\Hu|_J$ is injective,
    define the orientation for $\Eu$ near $x$ so that
    $\Hu$ increases along $J$.
    This gives a well-defined continuous orientation
    of $\Eu$ on all of $\Omin$.

    Suppose $\gam$ : $\tM  \to  \tM$ is a deck transformation
    mapping $\Om = \bbR^2 \ti I$ to itself.
    By the properties of the semiconjugacy,
    there is $z  \in  \bbZ^2$ such that
    $H \gam(x) = H(x) + z$
    for all $x  \in  \Omin$.
    Hence, $\gam$ preserves the orientation of $\Eu$.
    An analogous argument shows that
    $\gam$ preserves the orientation of $\Es$.
    By assumption,
    the original closed 3-manifold $M$ is orientable.
    Therefore,
    $\gam$ preserves the orientation of $T \tM$
    and must also
    preserve the orientation of $\Ec$.
\end{proof}
\begin{lemma} \label{lemma:Hunbdd}
    For $x \in \delcs$,
    the set $\Hu(\Wu(x) \cap \Om)$ is equal
    either to  $(-\infty, \Hu(x)]$ or $[\Hu(x), +\infty)$.
\end{lemma}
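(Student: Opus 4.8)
The plan is to describe $\Wu(x) \cap \Om$ as a single closed arc issuing from $x$ and running off to infinity in $\Omin$, and then to combine the injectivity of $\Hu$ along unstable curves (\cref{lemma:uhomeo}) with the quantitative estimate of \cref{prop:segubound}. First I would pin down the topology of $\Wu(x) \cap \Om$. Since $x$ lies on a boundary component tangent to $\Ecs$, uniform transversality of the splitting $T\tM = \Ecs \oplus \Eu$ shows that $\Wu(x)$ crosses $\del \Om$ transversally at $x$; hence one of the two arcs of $\Wu(x) \without \{x\}$ starts in $\Omin$ and the other starts outside $\Om$. By \cref{lemma:notwice} the leaf $\Wu(x)$ meets $\delcs$ only at $x$, and by \cref{lemma:stabonce} it meets no boundary component of $\Om$ other than the one containing $x$; therefore $\Wu(x) \cap \del \Om = \{x\}$. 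Because leaving $\Om$ requires crossing $\del \Om$, the first arc stays in $\Omin$ and the second stays outside $\Om$, so $\Wu(x) \cap \Om$ equals $\{x\}$ together with the first arc. Since unstable leaves are complete, this arc has infinite length, and I would parametrize $\Wu(x) \cap \Om$ by arc length as $\alpha : [0,\infty) \to \Om$ with $\alpha(0) = x$, so that each restriction $\alpha|_{[0,\ell]}$ is an unstable curve in $\Om$ of length $\ell$.

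Next, as $\Wu(x) \cap \Om$ is an unstable curve in $\Om$, \cref{lemma:uhomeo} implies that $\Hu \circ \alpha$ is a homeomorphism onto its image, hence a strictly monotone continuous function on $[0,\infty)$. Replacing $\alpha$ by its reverse parametrization merely interchanges the two alternatives in the statement, so I would assume $\Hu \circ \alpha$ is strictly increasing; then $\Hu(\Wu(x) \cap \Om) = [\Hu(x), c)$ where $c = \sup_{t \ge 0} \Hu \alpha(t) \in (\Hu(x), +\infty]$, and it remains only to exclude $c < \infty$. Using item (3) of \cref{prop:semiconj} and the fact that $\piu$ depends only on the $\bbR^2$-coordinate, $|\Hu - \piu|$ is bounded on $\Om$ by some constant $C_0$. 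If $c$ were finite, then $|\piu \alpha(s) - \piu \alpha(t)| < (c - \Hu(x)) + 2 C_0$ for all $s,t$; but applying \cref{prop:segubound} with $D = (c - \Hu(x)) + 2 C_0 + 1$ produces an $\ell$ for which the unstable curve $\alpha|_{[0,\ell]} \subset \Om$ contains points $p,q$ with $|\piu(p) - \piu(q)| > D$, a contradiction. Hence $c = +\infty$ and $\Hu(\Wu(x) \cap \Om) = [\Hu(x), +\infty)$, with the decreasing case giving $(-\infty, \Hu(x)]$ by the mirror argument.

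I expect the first step to be the main obstacle: carefully combining the transversality of $\Eu$ to $\delcs$ with \cref{lemma:notwice} and \cref{lemma:stabonce} to be certain that $\Wu(x) \cap \Om$ is exactly one half-open arc issuing from $x$ into $\Omin$ (in particular that the ``inner'' arc never returns to $\del \Om$ and that the ``outer'' arc never re-enters $\Om$). Once the geometry of this set is settled, the strict monotonicity of $\Hu$ along it is immediate from \cref{lemma:uhomeo}, and the unboundedness is a routine application of \cref{prop:segubound}.
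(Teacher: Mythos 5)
Your proof is correct, and it reaches the unboundedness of $\Hu(\Wu(x)\cap\Om)$ by a different mechanism than the paper. The paper's proof takes the unit unstable segment $J_1(x)$ issuing from $x$ into $\Om$, uses \cref{lemma:uhomeo} together with a compactness argument to get a uniform $\delta>0$ with $\length \Hu (J_1(x)) > \delta$, and then iterates: the relation $\Hu f = \lam \Hu$ gives $\length \Hu f^n J_1(f^{-n}(x)) > \lam^n\delta$, while \cref{lemma:notwice} guarantees these stretched segments all lie in $\Wu(x)\cap\Om$. You instead observe that the inner ray of $\Wu(x)$ stays in $\Om$ and has infinite length, and invoke \cref{prop:segubound} to force the $\piu$-variation (hence the $\Hu$-variation, up to the constant from \cref{prop:semiconj}) to be unbounded along it. Both routes ultimately rest on unstable expansion, but yours reuses an already-stated proposition rather than re-running the stretching argument, and you are more explicit than the paper about two points it leaves implicit: that $\Wu(x)\cap\Om$ is exactly one closed ray emanating from $x$ (via transversality of $\Eu$ and $\Ecs$, \cref{lemma:notwice}, and \cref{lemma:stabonce}), and that strict monotonicity of $\Hu$ along that ray (from \cref{lemma:uhomeo}) is what converts ``unbounded image'' into the precise conclusion that the image is a half-line with endpoint $\Hu(x)$. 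One cosmetic remark: one cannot literally reverse the parametrization of a ray based at $x$; what you mean, correctly, is that $\Hu\circ\alpha$ is either strictly increasing or strictly decreasing, and the two cases yield the two alternatives in the statement.
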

\begin{proof}
    For $x \in \delcs$, let $J_1(x)$ be the compact unstable segment
    which starts at $x$, is directed into $\Om$, and
    has length exactly one.
    By \cref{lemma:uhomeo}, $\Hu(J_1(x))$ always has positive length.
    By a compactness argument, there is $\delta > 0$
    such that the length of $\Hu(J_1(x))$ is greater than $\delta$
    for all $x$.
    As in the proof of \cref{lemma:xyD},
    one may show that
    \[
        \length \Hu f^n J_1(x) > \lam^n \delta
    \]
    for all $x \in \delcs$ and $n  \ge  0$
    where $\lam > 1$ is the unstable eigenvalue of $A$.
    \Cref{lemma:notwice}
    implies that $f^n J_1(f^{-n}(x))$ is a subset of $\Wu(x) \cap \Om$
    for all $n$.
    Thus, the length of $\Hu(\Wu(x) \cap \Om)$ is unbounded.
\end{proof}
\begin{lemma} \label{lemma:fiberU}
    Let $S$ be a connected component of $\delcs$
    and let
    \[    
        U = \Om \cap \bigcup_{x  \in  S} \Wu(x).
    \]
    For any $v \in \bbR^2$, $H \inv(v) \cap U$ is a topological ray.
    That is, there is a proper topological
    embedding $\beta:[0,+\infty) \to U$
    such that the image is $H \inv(v) \cap U$.
\end{lemma}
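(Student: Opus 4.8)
The plan is to realize $H\inv(v)\cap U$ as the family of points, one on each unstable ray emanating from a single center leaf of $f|_S$, parametrized by that center leaf.

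Fix $x\in S$. Since $\Eu$ is transverse to $\delcs\supof S$, the leaf $\Wu(x)$ crosses $S$ transversally at $x$, and exactly one of the two directions enters $\Om$; let $\eta_x\colon[0,\infty)\to\Om$ be the arclength parametrization, starting at $x$, of the component of $\Wu(x)\cap\Om$ entering $\Om$. First I would check that $\eta_x$ is all of $\Wu(x)\cap\Om$ and really is defined on $[0,\infty)$: since $\delcu$ is tangent to $\Ecu$ it is a union of unstable leaves, so an unstable curve meeting $\delcu$ lies in $\delcu$; hence $\eta_x$ never meets $\delcu$, and by \cref{lemma:notwice} it never returns to $\delcs$, so it cannot exit $\Om$. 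By \cref{lemma:uhomeo} the map $\Hu\circ\eta_x$ is an embedding, hence strictly monotone, and by \cref{lemma:Hunbdd} its image is an unbounded interval with endpoint $\Hu(x)$, hence a closed half-line. The monotonicity type of $\Hu\circ\eta_x$ is locally constant in $x$, so constant along the connected set $S$; after possibly reversing the orientation of $\Eu$ we may assume $\Hu\circ\eta_x$ is an increasing homeomorphism onto $[\Hu(x),+\infty)$ for all $x\in S$. Distinct points of $S$ lie on distinct unstable leaves (again \cref{lemma:notwice}), so the $\eta_x$ are pairwise disjoint and $U=\bigcup_{x\in S}\eta_x([0,\infty))$. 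Write $c=\piu(v)$ and $w=\pis(v)$. For $x\in S$ with $\Hu(x)\le c$ let $p(x)\in\eta_x$ be the unique point with $\Hu(p(x))=c$; by continuity of the unstable foliation and strict monotonicity of $\Hu$ along the $\eta_x$, the map $x\mapsto p(x)$ is continuous, and it is injective since the $\eta_x$ are disjoint. Thus $\{\,q\in U:\Hu(q)=c\,\}=\{\,p(x):x\in S,\ \Hu(x)\le c\,\}$.

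Next I would note that $\Hs$ is constant on unstable leaves: if $p,q$ lie on a common unstable leaf then $d(f\invn p,f\invn q)\to0$, and since $f(\Om)=\Om$ and $H$ is uniformly continuous, $d(A\invn Hp,A\invn Hq)\to0$; as $A$ is hyperbolic this forces $Hp-Hq$ into the unstable direction $\ker\pis$ of $A$, i.e.\ $\Hs(p)=\Hs(q)$ — the same argument as in \cref{prop:stos}. In particular $\Hs(p(x))=\Hs(x)$, so
\[
    H\inv(v)\cap U \;=\; \{\,p(x):x\in S,\ \Hu(x)\le c,\ \Hs(x)=w\,\}.
\]
Now set $g=f|_S$ and view $S$ with its splitting $TS=\Ec\oplus\Es$: $g$ is a system of the type treated in \cref{sec:dimtwo}, and by uniqueness of the Franks semiconjugacy $H|_S$ is that semiconjugacy, so $\Hu|_S,\Hs|_S$ are the functions $\Hu,\Hs$ of \cref{sec:dimtwo} for $g$. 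I claim $\{x\in S:\Hs(x)=w\}$ is a single center leaf $\Wc_g(x_0)$. By \cref{prop:ctou} and the fact that $\ker\pis$ is the unstable direction of $A$, $\Hs|_S$ is constant on center leaves of $g$; by \cref{prop:shomeo} it restricts to a homeomorphism onto $\bbR$ on each stable leaf of $g$; and by \cref{prop:twogps} each center leaf of $g$ meets each stable leaf of $g$ exactly once. Since $\Hs|_S$ is surjective, $\{\Hs|_S=w\}$ is a nonempty union of center leaves, all of which meet a fixed stable leaf in the same point; hence there is only one, $\Wc_g(x_0)$, a properly embedded line in $S$. Along it \cref{prop:monotonic} gives that $\Hu$ is monotone, and \cref{prop:ctou} with surjectivity of $H|_S$ shows $\Hu$ maps it onto $\bbR$ (because $\piu$ is nonzero on $\ker\pis$). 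Parametrizing $\Wc_g(x_0)$ by $\alpha\colon\bbR\to S$ with $\Hu\circ\alpha$ nondecreasing, the set $\{x\in\Wc_g(x_0):\Hu(x)\le c\}$ is the closed sub-ray $\alpha((-\infty,T])$, where $T=\sup\{t:\Hu\alpha(t)\le c\}<\infty$.

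Finally I would put $\beta=p\circ\alpha|_{(-\infty,T]}$. By the two displays its image is exactly $H\inv(v)\cap U$ (so in particular this set is nonempty), and $\beta$ is continuous and injective by the properties of $p$ and $\alpha$. It remains to see $\beta$ is a proper embedding, i.e.\ has closed image in $U$: if $\beta(t_n)\to q^*\in U$, then $\Hu(q^*)=c$ and $\Hs(q^*)=w$, so $H(q^*)=v$; writing $q^*\in\eta_{x^*}$ for the unique $x^*\in S$ on $\Wu(q^*)$, constancy of $\Hs$ on unstable leaves gives $\Hs(x^*)=w$, so $x^*=\alpha(t^*)$ for some $t^*$; since $q^*\in\eta_{x^*}$ and $\Hu(q^*)=c$ we get $q^*=p(x^*)=\beta(t^*)$, and $\Hu(\alpha(t^*))\le\Hu(q^*)=c$ gives $t^*\le T$. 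Hence the image is closed in $U$ and $\beta$ is a homeomorphism onto it; reparametrizing $(-\infty,T]$ as $[0,+\infty)$ completes the proof. The step requiring the most care — and where the two-dimensional theory of \cref{sec:dimtwo} does the real work — is identifying $\{\Hs|_S=w\}$ with a single center leaf of $f|_S$; once that is available, everything else is bookkeeping with the unstable rays $\eta_x$ and the monotonicity of $\Hu$ along that center leaf.
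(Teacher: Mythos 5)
Your proof is correct and its core construction is the same as the paper's: parametrize a center curve $\alpha$ in $S$ and define $\beta(t)$ as the unique point on the unstable ray through $\alpha(t)$ at which $\Hu$ takes the value $\piu(v)$, using \cref{lemma:Hunbdd} and \cref{prop:monotonic} exactly as the paper does. The one genuine difference is how the base curve $\alpha$ is found. The paper invokes \cref{cor:cuH} to identify $(\Hs)\inv(\pis(v))\cap U$ with a leaf $\Lcu$ of the branching foliation $\Fcu$, and then uses \cref{prop:uscunique} together with \cref{prop:twogps} to show that $\Lcu\cap S$ is a single center line. You instead prove directly that $\Hs$ is constant on unstable leaves (the backward-iteration argument of \cref{prop:stos} run inside $\Om$) and use the ``if and only if'' of \cref{prop:ctou} to identify $\{x\in S:\Hs(x)=\pis(v)\}$ with a single center leaf of $f|_S$. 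Your route is slightly more elementary in that it bypasses $\Fcu$ and \cref{cor:cuH} entirely, at the cost of redoing a small piece of semiconjugacy bookkeeping; both are valid. Two places where you are terse, though no more so than the paper: the continuity of $x\mapsto p(x)$ (the paper isolates this as a monotone-level-set claim and leaves it to the reader), and the properness of $\beta$, where your closed-image argument still needs the observation that $\beta(t_n)\to q^*$ forces $\alpha(t_n)\to x^*$ via an unstable foliation chart, hence $t_n\to t^*$ because $\alpha$ is properly embedded in $S$.
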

\begin{proof}
    First, note that $H \inv(v) = (\Hu) \inv (q) \cap (\Hs) \inv (r)$
    for some pair of numbers $q,r \in \bbR$.
    By \cref{cor:cuH},
    there is a leaf $\Lcu \in \Fcu$ such that
    $(\Hs) \inv (r) \cap U = \Lcu \cap U$.
    Therefore, we may restrict our attention to this leaf.

    Consider the intersection of $\Lcu$ and $S$.
    The semiconjugacy $H$ : $\Om  \to  \bbR^2$
    when restricted to $S$ agrees with the semiconjugacy,
    also denoted $H$,
    studied in \cref{sec:dimtwo}.
    In particular,
    $H|_S$ is surjective.
    This implies that $\Lcu \cap S$
    is non-empty.
    Every connected component of $\Lcu \cap S$ is a center line.
    By \cref{prop:twogps},
    any stable leaf lying in $S$ intersects every connected component
    of $\Lcu \cap S$.
    By \cref{prop:uscunique},
    $\Lcu$ intersects a stable curve in at most one point and so
    $\Lcu \cap S$ has exactly one connected component.
    Thus, $\Lcu \cap S$ is a properly embedded center curve
    and may be parameterized by a function $\alpha:\bbR \to \Lcu \cap S$.

    As $S$ is connected, %
    exactly one of the two cases in
    \cref{lemma:Hunbdd} holds for all $x  \in  S$.
    \WLOG{}, assume the case
    $\Hu(\Wu(x) \cap \Om) = (-\infty, \Hu(x)]$ holds.
    By \cref{prop:monotonic},
    $\alpha$ is monotonic.
    Up to composing $\alpha$ by an affine map on $\bbR$,
    we may assume $\alpha$ is defined so that $\Hu \alpha(t)  \ge  q$
    exactly when $t  \ge  0$.
    Define a map $\beta:[0,+\infty) \to U$
    by setting $\beta(t)$ to be the unique point in
    $\Wu (\alpha(t))$ which satisfies $\Hu \beta(t) = q$.
    Proving that $\beta$ is continuous
    reduces to proving the following claim.
    \begin{quote}
        \textbf{Claim.} Suppose
        $h:[0, +\infty) \times [0, +\infty) \to \bbR$
        is a continuous function with the properties that
        $x_1  \le  x_2$ implies $h(x_1,0)  \le  h(x_2,0)$
        and $y_1 < y_2$ implies $h(x,y_2) < h(x,y_1)$.
        Then, any level set of $h$ is the graph of a continuous function.
    \end{quote}
    The proof of the claim is left to the reader.
    In fact, the proof is highly similar in form to steps used in proving
    the implicit function theorem.

    It is clear that $\beta$ is injective.
    Suppose a sequence $\{t_k\}$ tending to $+\infty$ is such that
    $\beta(t_k)$ converges to a point $x \in U$.
    Since $\Wu(x)$ intersects $S$,
    one may use an unstable foliation chart in a neighbourhood of $x$,
    to derive a contradiction.
    This shows that $\beta$ is proper.
\end{proof}

\begin{proof}
    [Proof of \cref{prop:fiber}]
    Consider $v  \in  \bbR^2$.
    By \cref{lemma:fiberU},
    there is at least one point $x \in \Omin$ such that $H(x)=v$.
    Let $\Lcs$ be a leaf of $\Fcs$ passing through $x$
    and $\Lcu$ a leaf of $\Fcu$.
    Let $L$ be the connected component of $\Lcs \cap \Lcu$ which 
    passes through $x$.
    By \cref{prop:clines},
    $L$ is a properly embedded line.
    Since $H$ is proper,
    $H \inv(v)$ is a compact subset of $\Om$, and
    the ends of $L$ must eventually leave the interior of $\Om$.
    As such, there is a compact center segment $J \subset L$
    such that the endpoints of $J$ lie on $\del \Om$ and all other points
    of $J$ lie in the interior of $\Om$.
    \Cref{lemma:fiberU} then describes the exact shape of $J$ near the boundary
    of $\Om$.
    In particular, one sees that the two endpoints of $J$ cannot lie on the
    same boundary component, and so each of the two
    boundary components contains
    exactly one endpoint of $J$.

    Now suppose another connected component of $\Lcs \cap \Lcu$
    intersected the interior of $\Om$.
    This would lead to a center segment $J'$ disjoint from $J$ but such
    that $H(J) = H(J')$ and where each boundary component of $\Om$ contained
    exactly one endpoint of $J'$.
    \Cref{lemma:fiberU} would then imply that $J$ and $J'$ coincide near the
    boundary of $\Om$, a contradiction.
    Thus $H \inv(v)$ consists of $J$ together with the pre-images of $v$ on the
    two boundary components of $\Om$.  By \cref{cor:twov} and
    \cref{lemma:fiberU}, the result follows.
      \end{proof}
\begin{cor} \label{cor:fiberlength}
    There is a uniform upper bound on the length of a fiber $H$ $\inv(v)$.
\end{cor}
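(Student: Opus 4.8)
The plan is to split the fiber $H\inv(v)$ into at most three center segments and to bound the length of each one using \cref{prop:cvolume}. The first observation is that, by item~(3) of \cref{prop:semiconj}, if $H(w,s)=v$ then $\|w-v\|<C$, so $H\inv(v)$ is contained in $B(v,C)\times I\subset\Om=\bbR^2\times I$. Since $I$ is a compact interval and the metric on $\Om$ is comparable to the Euclidean one, $H\inv(v)$ lies in a subset of $\tM$ of diameter bounded independently of $v$. Because $\tM$ is a cover of the closed manifold $M$ it has bounded geometry, so the $1$-neighbourhood $U_1(X)$ of any such uniformly bounded set $X$ satisfies $\volume U_1(X)\le V$ for a constant $V$ not depending on $v$.

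Next I would invoke the description of $H\inv(v)$ obtained in the proof of \cref{prop:fiber}: it is a compact arc tangent to $\Ec$ which decomposes as $c_0\cup J\cup c_1$, where $J$ is the part lying in $\Omin$ and $c_0$, $c_1$ are the (possibly one-point) intersections of $H\inv(v)$ with the two boundary components of $\Om$. The proof of \cref{prop:fiber} already exhibits $J$ as a center segment contained in a leaf of $\Fcs$ (the connected component of $\Lcs\cap\Lcu$ through any interior point of the fiber), so \cref{prop:cvolume} gives $\length(J)\le\volume U_1(J)/C\le V/C$.

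It remains to bound $\length(c_0)$ and $\length(c_1)$, which is the step I expect to require the most care. Each $c_i$ is a compact center segment contained in a boundary component $S$ of $\Om$, and the goal is to recognise it as a center segment inside a leaf of a branching foliation, so that \cref{prop:cvolume} — applied to $f$ and $\Fcs$ when $S$ is tangent to $\Ecu$, and to $f\inv$ and $\Fcu$ when $S$ is tangent to $\Ecs$ — can be applied again. When $S$ is tangent to $\Ecs$, \cref{lemma:fiberU} presents the portion of $H\inv(v)$ near $S$ as lying in the single leaf $\Lcu\in\Fcu$ with $(\Hs)\inv(r)\cap U=\Lcu\cap U$; since $\Lcu$ is saturated by unstable leaves, $c_i\subset\Lcu\cap S\subset\Lcu$. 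The case where $S$ is tangent to $\Ecu$ is handled by the statement symmetric to \cref{lemma:fiberU} under the interchange $f\leftrightarrow f\inv$. Either way $\length(c_i)\le V/C'$ for a uniform constant $C'$ coming from \cref{prop:cvolume} for $f\inv$, and adding the three estimates gives $\length H\inv(v)\le V/C+2V/C'$, a bound independent of $v$. The only genuine subtlety is this identification of the boundary segments with subsets of branching-foliation leaves; it is little more than bookkeeping on top of \cref{lemma:fiberU}, but it does use that all of the constructions of this section are symmetric under replacing $f$ by $f\inv$.
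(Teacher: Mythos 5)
Your argument is essentially the paper's: a uniform bound on the diameter of $H\inv(v)$ (here via item (3) of \cref{prop:semiconj}, in the paper via properness and equivariance of $H$) gives a uniform bound on $\volume U_1(H\inv(v))$, and \cref{prop:cvolume} converts this into a length bound. The paper applies \cref{prop:cvolume} to the whole fiber in one stroke, whereas you split off the boundary sub-segments and check the leaf-containment hypothesis for each piece separately; that extra bookkeeping is sound but does not constitute a different route.
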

\begin{proof}
    As $H$ is proper and commutes with deck transformations,
    there is a uniform upper bound on the diameter $H$ $\inv(v)$
    independent of $v  \in  \bbR^2$.
    There is then a uniform upper bound on the volume of
    $U_1 (H \inv(v))$ and the result follows from \cref{prop:cvolume}.
\end{proof}

\section{Building the ragged leaf conjugacy} \label{sec:leafconj} %

\begin{lemma} \label{lemma:fuller}
    There is a continuous function $p: \Omin \to (0,1)$
    such that
    for any center segment
    of the form $J = H \inv(v) \cap \Omin$,
    the restriction $p|_{J}$ is a $C^1$ embedding.
    Moreover, with respect to arc length, $p|_{J}$ has a uniform speed
    independent of the choice of $J$.
\end{lemma}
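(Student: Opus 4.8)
\textbf{Proof sketch for \cref{lemma:fuller}.}
The plan is to build $p$ by averaging along center fibers so that each fiber $J = H^{-1}(v)\cap\Omin$ is traversed at constant speed, then showing that this construction depends continuously on $v$. First I would fix, for each $v\in\bbR^2$, the compact center segment $H^{-1}(v)$ produced by \cref{prop:fiber}, and let $\ell(v)$ be the arc length of its interior portion $J=H^{-1}(v)\cap\Omin$. By \cref{cor:fiberlength} there is a uniform upper bound on $\ell(v)$, and one needs a positive lower bound as well: since the endpoints of $H^{-1}(v)$ lie on the two different boundary components of $\Om$ (again \cref{prop:fiber}) and these components are at uniform positive distance apart (as in the proof of \cref{lemma:stabonce}), $\ell(v)$ is bounded below by a constant $\ell_0>0$. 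The idea is then to parameterize $J$ by arc length $\gamma_v:(0,\ell(v))\to\Omin$ starting from the boundary component $S_0$, and define $p(\gamma_v(t)) = t/\ell(v)$, so that $p|_J$ is a $C^1$ embedding onto $(0,1)$ with speed $1/\ell(v)$, uniform along $J$ and — because $\ell_0\le\ell(v)\le$ const — within uniform bounds across different fibers.

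The substance of the proof is the continuity of $p$ on $\Omin$, which amounts to two things: that the fibers $H^{-1}(v)$ vary continuously with $v$ in a suitable sense, and that $\ell(v)$ is a continuous function of $v$. For the first, I would use that each fiber lies inside a connected component $L$ of an intersection $\Lcs\cap\Lcu$ of a center-stable and a center-unstable leaf (\cref{prop:clines}), together with property (3) of branching foliations (limits of leaves are leaves) and \cref{prop:csH}/\cref{cor:cuH}, which identify these leaves via the level sets of $\Hu$ and $\Hs$; uniqueness of the center fiber through each interior point (which follows from \cref{prop:fiber}, since $H^{-1}(v)$ is a single arc) rules out branching ambiguity. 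Concretely, if $x_n\to x$ in $\Omin$ with $H(x_n)=v_n\to v=H(x)$, one shows the arcs $H^{-1}(v_n)\cap\Omin$ converge to $H^{-1}(v)\cap\Omin$ in the Hausdorff sense on compact subsets of $\Omin$, using that the center direction $\Ec$ is a continuous line field along which these arcs are tangent (\cref{prop:uniqueinttwo}-type unique integrability inside the relevant two-dimensional leaves, via \cref{prop:ctou}). Continuity of $\ell(v)$ then follows from the behavior of the fibers near $\del\Om$ described in \cref{lemma:fiberU}: the endpoint on each boundary component moves continuously, so the total arc length does too.

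From continuity of $x\mapsto(v,t)$ where $v=H(x)$ and $t$ is the arc-length position of $x$ along $H^{-1}(v)$ measured from $S_0$, together with continuity of $\ell(v)$ bounded away from $0$, one gets that $p(x)=t/\ell(H(x))$ is continuous on $\Omin$ with values in $(0,1)$. The restriction to a fixed fiber $J$ is $t\mapsto t/\ell(v)$, manifestly a $C^1$ embedding of $(0,\ell(v))$ onto $(0,1)$ with constant speed $1/\ell(v)\in[1/C,1/\ell_0]$, which is the claimed uniform speed. I expect the main obstacle to be the continuity of the fibers at points where the branching foliations $\Fcs$ or $\Fcu$ genuinely branch: there one must argue that, although several $cs$- or $cu$-leaves pass through a given point, the \emph{center} curve $H^{-1}(v)$ is nevertheless unique (by \cref{prop:fiber}) and is approached by the nearby fibers, so no jump in arc length can occur. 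Handling this carefully — showing that a sequence of center fibers cannot ``peel off'' along different branches in the limit — is the technical heart of the argument, and it relies on the properness of $H$ and the fact that $\Ec$ is a continuous (not merely measurable) subbundle.
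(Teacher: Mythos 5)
Your construction $p(\gamma_v(t)) = t/\ell(v)$ has a genuine gap: it presupposes that the arc length $\ell(v)$ of $H\inv(v)\cap\Omin$ and the arc-length coordinate $t$ measured from the boundary component $S_0$ depend continuously on the point. They do not. As the paper emphasizes in the discussion of the ``ragged leaf conjugacy'' (see the caption of figure \ref{fig:ragged}), when the dynamics on a $cs$-torus has a sink, the center fibers must skip over the basin of the sink as they approach the torus, so the distance along the center curve from an interior point to $\del \Om$ is discontinuous, and consequently so is $\ell(v)$; this is exactly why the boundary of $h(U_i)$ is ragged and the lengths of the vertical segments jump. Your step ``the endpoint on each boundary component moves continuously, so the total arc length does too'' is precisely the assertion that fails: \cref{prop:fiber} allows $H\inv(v)$ to meet a boundary component in a whole segment, and the attachment point of the interior arc to $\del\Om$ can jump as $v$ varies. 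So the function you define is discontinuous on $\Omin$ whenever a boundary torus is not Anosov. (Your interior continuity argument — Hausdorff convergence of fibers on compact subsets of $\Omin$ — and the lower bound $\ell(v)\ge \ell_0$ are fine; the failure is entirely at the boundary.)

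The paper circumvents this with an averaging trick that you should compare with: it takes the globally continuous function $p_0$ equal to the normalized distance ratio $\dist(x,S_0)/(\dist(x,S_0)+\dist(x,S_1))$ on $\Omin$, extended to be locally constant (hence $0$ or $1$) outside $\Om$, and sets
\[
    p(\alpha(t)) = \frac{1}{2T}\int_{t-T}^{t+T} p_0(\alpha(s))\,ds
\]
along the complete, properly embedded center curve $\alpha$ through $x$, where $T$ is the uniform bound on fiber lengths from \cref{cor:fiberlength}. Continuity of $p$ then comes from continuity of $p_0$ on all of $\tM$ together with continuous dependence of the center curves, with no reference to where the fiber hits $\del\Om$; and since the averaging window of radius $T$ always reaches outside $\Om$ on both sides when $\alpha(t)\in\Omin$, the Fundamental Theorem of Calculus gives $\frac{d}{dt}p(\alpha(t)) = \tfrac{1}{2T}$ exactly. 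This also delivers the literal conclusion of the lemma — a single speed $\tfrac{1}{2T}$ independent of $J$ — whereas your normalization would only give a speed $1/\ell(v)$ that is constant along each fiber but varies from fiber to fiber.
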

\begin{proof}
    Let $S_0$ and $S_1$ be the two boundary components of $\Om$.
    For $x \in \Omin$, define
    \[    
        p_0(x) =
        \frac
            {\dist(x, S_0)}
            {\dist(x, S_0) + \dist(x, S_1)}.
    \]
    Extend $p_0$ to a continuous function $p_0 : \tM \to [0,1]$
    by requiring it to be locally constant outside of $\Om$.
    
    Now suppose $x \in \Omin$ and
    let $\Lcs \in \Fcs$ and
    $\Lcu \in \Fcu$ be leaves of the foliations
    such that
    $x \in \Lcs \cap \Lcu$.
    Let $\alpha:\bbR \to \Lcs \cap \Lcu$
    be a parameterization by arc length of this center curve.
    By \cref{prop:clines},
    $\alpha(\bbR)$ is a complete curve properly embedded in $\tM$.
    Let $T > 0$ be the upper bound given by
    \cref{cor:fiberlength} and
    for any $t \in \bbR$, define
    \[
        p( \alpha(t) ) = \frac{1}{2T} \int_{t-T}^{t+T} p_0( \alpha(s) ) \, ds.
    \]
    If $\alpha(t) \in \Omin$, then neither $\alpha(t-T)$ nor $\alpha(t+T)$
    lies in $\Omin$.
    Up to possibly reversing the parameterization,
    it follows that $\alpha(t-T) = 0$ and $\alpha(t+T) = 1$
    and by the Fundamental Theorem of Calculus
    \[
        \frac{d}{dt} p ( \alpha(t) ) = \frac{1}{2T}.\qedhere
    \]  \end{proof}
\begin{lemma} \label{lemma:xyz}
    If $x,y,z  \in  \Omin$ with $y  \in  \Wu(x)$ and $z  \in  \Ws(y)$, then
    $H(x) = H(z)$ if and only if $x = y = z$.
\end{lemma}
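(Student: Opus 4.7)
The plan is first to establish the chain of equalities $H(x)=H(y)=H(z)$, and then to use \cref{prop:fiber} to trap all three points on a single compact center segment, where the uniqueness of the intersection of a $\Fcs$-leaf with an unstable leaf, and of a $\Fcu$-leaf with a stable leaf, will collapse this segment to a single point.

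First, I would derive $H(x)=H(y)$. Since $y\in\Wu(x)$ and every $\Fcu$-leaf through $x$ is saturated by unstable leaves (the $\Fcu$ analog of \cref{prop:ssat}, obtained by applying the branching foliation theory of \cref{sec:bran} to $f\inv$), the points $x$ and $y$ share a common $\Fcu$-leaf, so \cref{cor:cuH} gives $\Hs(x)=\Hs(y)$. An analogous step using $z\in\Ws(y)$, \cref{prop:ssat}, and \cref{prop:csH} yields $\Hu(y)=\Hu(z)$. Together with the hypothesis $H(x)=H(z)$ (which supplies both $\Hu(x)=\Hu(z)$ and $\Hs(x)=\Hs(z)$), these force $H(x)=H(y)=H(z)$; call this common value $v$.

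Next, \cref{prop:fiber} says that $H\inv(v)$ is a compact segment tangent to $\Ec$, and in particular it contains $x,y,z$. Applying \cref{prop:csH} to the pair $x,y$ produces a single leaf $L\in\Fcs$ containing both; since $y\in\Wu(x)\cap L$ and \cref{prop:uscunique} forbids more than one intersection of a $\Fcs$-leaf with an unstable leaf, we conclude $x=y$. Then $z\in\Ws(y)=\Ws(x)$, and \cref{cor:cuH} puts $x$ and $z$ on a common $\Fcu$-leaf; the $\Fcu$ version of \cref{prop:uscunique} (each $\Fcu$-leaf meets a stable leaf in at most one point) finishes the proof by giving $x=z$. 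The only subtle point worth flagging is that both the $\Fcu$ analogs of \cref{prop:ssat} and of \cref{prop:uscunique} are at our disposal because the branching foliation results of \cref{sec:bran} apply equally to $f\inv$; once these are acknowledged, the argument is a short chain of implications and no further dynamics is needed.
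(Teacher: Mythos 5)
Your argument is correct and uses the same ingredients as the paper's proof (\cref{prop:csH}, \cref{cor:cuH}, the saturation of leaves by stable/unstable curves, and \cref{prop:uscunique} together with its $\Fcu$ analog), only in a slightly reordered chain of deductions; the appeal to \cref{prop:fiber} is harmless but not actually needed. The paper goes more directly: $H(x)=H(z)$ places $x$ and $z$ on one $\Fcs$-leaf, stable saturation puts $y$ on it too, and \cref{prop:uscunique} collapses $x=y$, with the symmetric $\Fcu$ argument giving $y=z$.
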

\begin{proof}
    Assume $H(x) = H(z)$.
    By \cref{prop:csH},
    $x$ and $z$ lie on the same leaf $L$ of $\Fcs$.
    By \cref{prop:ssat},
    $L$ is saturated by stable curves and therefore
    the point $y$ also lies in $L$.
    The uniqueness given by 
    \cref{prop:uscunique} implies that $x = y$.
    A similar argument shows that $y = z$.
\end{proof}
\begin{proof}
    [Proof of \cref{thm:main}]
    Define $h = H \times p : \Omin \to \bbR^3$.
    This is a continuous map and injective by \cref{prop:fiber} and
    \cref{lemma:fuller}.
    We now show that $h$ is an open map.
    Consider $x  \in  \Omin$ and
    assume
    that $\Eu$, $\Ec$, and $\Es$ are oriented.
    With respect to these orientations,
    define unit speed flows $\varphi^s$ and $\varphi^u$
    along the stable and unstable directions.
    Define $\varphi^c$ as the unit speed flow along the fibers of $H$
    with the direction given by the orientation of $\Ec$.
    Define $i$ : $[-\ep,\ep]^3  \to  \Omin$ by
    \[
        i(t_1,t_2,t_3) = \varphi^c_{t_3} \varphi^s_{t_2} \varphi^u_{t_1} (x)
    \]
    where $\ep$ is small enough that the range of $i$ is contained in
    $\Omin$.
    By lemmas \ref{lemma:fuller} and \ref{lemma:xyz},
    the range of $h \circ i$ contains $h(x)$ in its interior.
    Taking $\ep$ to zero,
    one can show that for any neighbourhood $V$ of $x$,
    $h(V)$ is a neighbourhood of $h(x)$
    and therefore $h$ is open.
    It follows that $h$ is
    a homeomorphism to its image, $U := h(\Omin)$.

    As $H$ is a semiconjugacy,
    the homeomorphism $h f h \inv:U \to U$ is of the form
    \[
        h f h \inv(v, s) = (A(v), \phi(v,s) )
    \]
    where $A:\bbR^2 \to \bbR^2$ is the hyperbolic linear map and
    $\phi:U \to \bbR$ is some continuous function.
    By construction, the maps $H$, $p$, and therefore $h$
    are $\bbZ^2$-equivariant.
    Thus, $h$ quotients down to an embedding
    $U_i  \to  \bbT^2 \ti \bbR$
    which satisfies the properties listed in item (4) of \cref{thm:main}.
\end{proof}

\appendix \section{Bounds on foliations} \label{sec:ap} %

This appendix proves \cref{prop:linfoln}.
Let $\Fcs$, $\Om = \bbR^2 \ti I$, and $Z_0 \subof \bbZ^2$
be as in \cref{sec:defOm}.
Let $S$ be a leaf of $\Fcs$ which intersects the interior $\Omin$ of $\Om$.
Here, we use $S$ instead of $L$ to keep closer to the notation of \cite{BBI2}.
As $S$ is properly embedded,
the complement $\tM \without S$ consists of two open
connected components $S_+$ and $S_-$
where the oriented $\Eu$ bundle points into $S_+$.
Define $S_+ + Z_0 = \{p + z : p \in S_+, z \in Z_0\}$
and similarly for $S_- + Z_0$.

\begin{lemma} \label{lemma:Scover}
    The set $\Om \sans \delcs$ is contained in both
    $S_+ + Z_0$ and $S_- + Z_0$.
\end{lemma}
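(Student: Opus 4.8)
The plan is to reduce the lemma to the single inclusion $\Om \sans \delcs \subset S_+ + Z_0$. The inclusion into $S_- + Z_0$ then follows by running the same argument after reversing the orientation of $\Eu$, which interchanges the labels $S_+$ and $S_-$ and leaves the subgroup $Z_0$ unchanged --- whether a deck transformation preserves the orientation of a subbundle does not depend on which of the two orientations was chosen. Since $\gam_z$ acts on $\Om = \bbR^2 \ti I$ by $\gam_z(v,s) = (v+z,s)$, a point $p \in \Om$ lies in $S_+ + Z_0$ precisely when its $Z_0$-orbit meets the open half space $S_+$. So the statement to prove becomes: every point of $\Om \sans \delcs$ admits a $Z_0$-translate lying in $S_+$.

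I would then convert this into a one-sided condition along unstable leaves. Orient $\Eu$ so that it points into $S_+$. By \cref{prop:uscunique} each translate $\gam_z(S)$ meets any unstable leaf in at most one point, and since $z \in Z_0$ preserves the orientation of $\Eu$, such a crossing always goes from $\gam_z(S_-)$ into $\gam_z(S_+)$. Reading this along the unstable leaf through $p$, one sees that $p$ itself lies in $\gam_z(S_+)$, hence in $S_+ + Z_0$, as soon as the negative unstable ray issuing from $p$ meets $\gam_z(S)$. Thus the lemma reduces to the following claim: \emph{for every $p \in \Om \sans \delcs$, the negative unstable ray from $p$ meets $\bigcup_{z \in Z_0}\gam_z(S)$.}

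To prove this claim I would argue by contradiction, assuming the negative unstable ray $R$ from $p$ is disjoint from every translate of $S$. First one locates $R$: it cannot leave $\Om$ across a center-unstable boundary component, since such a component is tangent to $\Ec \oplus \Eu$ and therefore contains any unstable leaf that meets it, and by the uniform transversality of $\Eu$ and $\Ecs$ it meets $\delcs$ at most once, exactly as in \cref{lemma:notwice}; hence $R$ either stays in $\Om$ or leaves $\Om$ once through $\delcs$ and then stays outside. The family $\{\gam_z(S) : z \in Z_0\}$ descends to an immersed surface $\bar S$ in the compact quotient $M_i = \Om/Z_0$, which by \cref{prop:ssat} is saturated by stable leaves of $M_i$ and which cannot be compact: a full-rank $Z_0$-stabilizer of $S$ would make $\bar S$ a compact leaf of $\Fcs$ tangent to $\Ecs$ meeting $\Omin$, that is, a surface meeting $\Omin$ which quotients to a $cs$-torus, and this is forbidden by the choice of $\Om$. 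So $\bar S$ is a non-compact, stable-saturated surface in $M_i$, and the image of $R$ is an infinite unstable arc disjoint from $\bar S$ (in the boundary case, an infinite unstable arc disjoint from all translates of $S$ inside the region of $\tM$ adjacent to $\delcs$).

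The step I expect to be the main obstacle is ruling out such an arc. I would aim to produce a forced crossing by combining the two-dimensional structure on the boundary tori --- where \cref{prop:uniqueinttwo} and \cref{prop:twogps} show that a center-stable leaf meets a center-unstable boundary torus in a full center curve which in turn crosses every stable curve on that torus --- with the fact that $f$ expands arc length along unstable leaves at a uniform rate, so that applying $f^n$ stretches any fixed initial segment of the arc to arbitrary length while keeping it disjoint from the corresponding leaves $\gam_z(f^n(S))$; a sufficiently long unstable arc should then be unable to avoid the stable-saturated surface, giving the contradiction. It is precisely the non-compactness of $\bar S$ that makes the hypothesis that $\Om$ contains no $cs$ or $cu$-torus meeting its interior indispensable here, and this is also why $\delcs$ is excluded from the conclusion: for a point on a center-stable boundary component the negative unstable ray leaves $\Om$ immediately, so the argument just described does not apply to it.
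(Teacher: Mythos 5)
Your reduction is sound as far as it goes: translating the statement into ``every point of $\Om \sans \delcs$ has a $Z_0$-translate in $S_+$,'' and then into ``the negative unstable ray from every such point meets some $\gam_z(S)$,'' is correct, using \cref{prop:uscunique} and the fact that $\gam_z(S)$ is the full boundary of the half-space $\gam_z(S_+)$. But that claim \emph{is} the lemma, and you never prove it; you explicitly flag it as ``the main obstacle'' and offer only the hope that ``a sufficiently long unstable arc should then be unable to avoid the stable-saturated surface.'' Nothing in the paper up to this point justifies that. A long unstable arc can in principle stay disjoint from a non-compact, stable-saturated immersed surface in the compact quotient; to rule this out one needs quantitative control on how leaves of $\Fcs$ sit inside $\Om$ relative to the linear coordinates. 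That control is exactly what \cref{prop:linfoln} (and its consequences \cref{prop:csbound}, \cref{prop:segubound}-type arguments) provides --- and \cref{prop:linfoln} is proved in the appendix \emph{using} \cref{lemma:Scover}. So the only available tool for finishing your sketch would make the argument circular. There is also a secondary issue: iterating by $f$ replaces the family $\{\gam_z(S)\}$ by $\{\gam_{Az}(f^n(S))\}$, so disjointness of the ray from the original family is not preserved under the dynamics in the way your stretching argument needs.

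The paper avoids the dynamics entirely here. It observes that the boundary of $\tilde X = S_+ + Z_0$ is a union of complete leaves of $\Fcs$ (by closedness of the branching foliation under compact-open limits), passes to the compact quotient $\hat M = \tM/Z_0$, and shows each boundary torus of $\hat\Om$ is either contained in or disjoint from $\hat X$: a proper nonempty intersection would force $\del \hat X$ to contain an immersed line in the torus that accumulates on itself (via the stable- or center-saturation and the results of \cref{sec:dimtwo}), contradicting the fact that the oriented $\Eu$ prevents $\del\hat X$ from accumulating on itself. Hence any leaf of $\del\hat X$ meeting the interior of $\hat\Om$ would be a closed $cs$-surface inside it, contradicting the choice of $\Om$. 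You should either adopt this separation argument or supply a genuinely non-circular proof of your unstable-ray claim; as written, the central step is a gap.
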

\begin{proof}
    As the branching foliation is complete in the compact-open topology,
    one can show that the boundary $\del \tilde X$ of $\tilde X = S_+ + Z_0$ 
    is a union of leaves of $\Fcs$.
    (See the proof of Lemma 3.10 in \cite{BBI2} for details.)

    Consider the manifold $\hat M$ defined by the quotient $\tM / Z_0$.
    Then $\tilde X$ quotients down to a subset $\hat X \subof \hat M$
    and $\del \tilde X$ quotients down to $\del \hat X$.
    In particular, $\del \hat X$ is closed subset of $\hat M$
    and the orientation of $\Eu$ shows that
    $\del \hat X$ does not accumulate on itself.

    Let $\hat \Om$ be the quotient of $\Om$ to $\hat M$.
    Its boundary consists of two tori.
    We claim that each torus is either contained in $\hat X$
    or disjoint from $\hat X$.
    Indeed, let $T$ be one of the tori
    and suppose $\hat X \cap T$ is a non-empty proper subset.
    If $T$ is tangent to $\Ecs$,
    then $\hat X \cap T$ is saturated by stable leaves.
    If $T$ is tangent to $\Ecu$,
    then $\hat X \cap T$ is saturated by center leaves.
    In either case, the results in \cref{sec:dimtwo} imply that
    $\del \hat X$ contains a topological line immersed in $T$.
    This line accumulates on itself and gives a contradiction.

    Hence, if $\del \hat X$ intersects the interior of $\hat \Om$,
    it must have a connected component lying entirely in $\hat \Om$.
    This component would be a $cs$-torus,
    which would contradict the assumptions
    given on $\Om$ in \cref{sec:defOm}.
    This shows that $\tilde X$ contains $\Omin$.
    As $\tilde X$ is saturated by stable leaves,
    it also contains any component of $\del \Om$ tangent to $\Ecu$.
\end{proof}
\begin{lemma}
    There is a non-zero linear map $\pi:\bbR^2 \to \bbR$
    such that
    $\pi(z)  \ge  0$ implies $S_+ + z \subset S_+$
    and
    $\pi(z)  \le  0$ implies $S_+ \subset S_+ + z$
    for $z \in Z_0$.
\end{lemma}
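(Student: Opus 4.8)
The plan is to study the submonoid
$\mathcal{P} = \{\, z \in Z_0 : S_+ + z \subseteq S_+ \,\}$
and show that it is cut out by a linear functional. That $\mathcal{P}$ is a submonoid is immediate: $0 \in \mathcal{P}$, and if $z,z' \in \mathcal{P}$ then translating $S_+ + z \subseteq S_+$ by $z'$ gives $S_+ + (z+z') \subseteq S_+ + z' \subseteq S_+$. Note also that $z \in \mathcal{P}$ is equivalent to $S_+ \subseteq S_+ - z$, which is the condition the lemma requires at $-z$; so the two assertions of the lemma together amount to producing a nonzero linear $\pi : \bbR^2 \to \bbR$ with $\{\pi \ge 0\} \cap Z_0 \subseteq \mathcal{P}$ and $\{\pi \le 0\} \cap Z_0 \subseteq -\mathcal{P}$, which in particular forces $\ker\pi \cap Z_0 \subseteq \mathcal{P}\cap(-\mathcal{P})$.

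The essential geometric input is a \emph{comparability} statement: for every $z \in Z_0$, either $z \in \mathcal{P}$ or $-z \in \mathcal{P}$. Since $z \in Z_0$, the deck transformation $\gam_z$ preserves the co-orientation of $\Fcs$ determined by $\Eu$, so $S+z = \gam_z(S)$ is a leaf of $\Fcs$ with positive side $S_+ + z$. Two leaves of a branching foliation never topologically cross, and since $S+z$ is connected this forces $S+z$ to lie entirely in one of the closed half-spaces $\overline{S_+}$, $\overline{S_-}$ of \cref{prop:csdivide}; this is the analogue in our setting of the comparability results in \cite[Section~3]{BBI2}. If $S+z \subseteq \overline{S_+}$, then $S_+ + z$ is a connected open set contained in $S_+ \cup S$ and disjoint from $S+z$, and a neighbourhood argument (using that $S$ is the common frontier of $S_+$ and $S_-$) shows $S_+ + z$ cannot meet $S$ either; hence $S_+ + z \subseteq S_+$ and $z \in \mathcal{P}$. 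The case $S+z \subseteq \overline{S_-}$ gives $-z \in \mathcal{P}$ symmetrically. I expect this comparability step to be the main conceptual obstacle, as it is where the non-crossing property of branching foliations is essential.

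Granting comparability, the relation $z \preceq z' \Leftrightarrow z' - z \in \mathcal{P}$ is a translation-invariant total preorder on $Z_0 \cong \bbZ^2$ whose subgroup of indifferent elements is exactly $Z_1 = \{\, z \in Z_0 : S+z = S \,\}$. Moreover $\mathcal{P} \ne Z_0$: if $\mathcal{P} = Z_0$ then $-z \in \mathcal{P}$ for every $z$, forcing $S_+ + z = S_+$ for all $z$, hence $S_+ + Z_0 = S_+$ and $S_- + Z_0 = S_-$, which contradicts \cref{lemma:Scover} (it places a point of $\Omin$ in $S_+ \cap S_- = \varnothing$). By the classification of translation-invariant total orders on a free abelian group of rank at most two, $Z_0 / Z_1$ is trivial, or $\bbZ$, or $\bbZ^2$ with an Archimedean order, or $\bbZ^2$ with a lexicographic order. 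In the first three cases the induced order on $Z_0/Z_1$ comes from a homomorphism into $(\bbR,+)$ injective on $Z_0/Z_1$ (Hölder); composing with $Z_0 \to Z_0/Z_1$ and extending $\bbR$-linearly produces a nonzero $\pi : \bbR^2 \to \bbR$ with $\ker\pi \cap Z_0 = Z_1$ and $\mathcal{P} = \{\pi \ge 0\} \cap Z_0$, which is precisely the functional required.

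It remains to exclude the lexicographic case, and this I also expect to require real work. If it held, there would be a primitive $w \in Z_0$ representing the infinitesimal direction, so that (after replacing $w$ by $-w$ if needed) one has $-w \in \mathcal{P}$, $w \notin \mathcal{P}$, hence a strictly nested chain $\cdots \subsetneq S_+ + 2w \subsetneq S_+ + w \subsetneq S_+$, while every ``macroscopically positive'' $u \in Z_0$ satisfies $u + nw \in \mathcal{P}$ for all $n \in \bbZ$, i.e. $S_+ + u \subseteq S_+ + nw$ for all $n$. Then $S_+ + u$ lies in the nonempty $w$-invariant open set $V = \bigcap_{n \ge 0}(S_+ + nw) \subseteq S_+$, whose frontier is built from $w$-invariant limit leaves of $\Fcs$ (the leaves $S + nw$ have local limits, since their intersections with $\Om = \bbR^2\ti I$ stay bounded in the $I$-direction). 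Passing to the quotient $\tM / Z_0$ as in the proof of \cref{lemma:Scover} and using that $\Om$ was chosen so that no surface meeting $\Omin$ covers a $cs$-torus, one expects to derive a contradiction from the existence of such invariant leaves. This closes the lexicographic case; together with the comparability step it forms the bulk of the argument, the remaining steps being routine.
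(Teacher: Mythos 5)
Your overall strategy is the same as the paper's, which simply defers to Lemmas 3.8--3.12 of \cite{BBI2}: comparability of the translates $S_+ + z$ via the non-crossing property, reduction to a translation-invariant total preorder on $Z_0$, and exclusion of the degenerate cases using \cref{lemma:Scover} and the no-torus hypothesis on $\Om$. The first two steps are essentially right. (One small slip in the comparability paragraph: you assert that $S_+ + z$ is ``contained in $S_+ \cup S$'' as if it followed from $S + z \subseteq \overline{S_+}$ alone; what that containment actually gives you is that the connected set $S_-$ lies in exactly one of $S_+ + z$ or $S_- + z$, and you must first use the preserved coorientation by $\Eu$ to rule out $S_- \subseteq S_+ + z$ before concluding $S_+ + z \subseteq \overline{S_+}$ and then removing the frontier points.) The deduction $\mathcal{P} \neq Z_0$ from \cref{lemma:Scover} is correct, and you are right that, because the statement uses $\pi(z) \ge 0$ and $\pi(z) \le 0$ (so that $\pi(z) = 0$ forces $S_+ + z = S_+$), the non-Archimedean case cannot simply be absorbed into the functional and must genuinely be excluded.

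The gap is precisely there. Your proposed contradiction in the lexicographic case rests on producing ``$w$-invariant limit leaves'' from the frontier of $V = \bigcap_{n \ge 0}(S_+ + nw)$ and then ``passing to the quotient $\tM/Z_0$ as in the proof of \cref{lemma:Scover}.'' But the mechanism in \cref{lemma:Scover} requires invariance under the full-rank group $Z_0$: it is the $Z_0$-invariance of $S_+ + Z_0$ that lets its boundary descend to a closed, non-self-accumulating union of leaves in the compact quotient $\hat M$, whose components are then compact and hence $cs$-tori. The set $V$ and its frontier are invariant only under the rank-one subgroup $\langle w \rangle$ (the macroscopic translates move $V$ off itself), so they do not descend to $\hat M$ at all; a leaf invariant under $\langle w \rangle$ alone projects to a plane or an annulus/cylinder in $M$, not a torus, and the hypothesis on $\Om$ is not contradicted. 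So as written this step would fail, and you have not supplied a replacement; ``one expects to derive a contradiction'' is doing the real work. To close the argument you need an additional input along the lines of: if $S + z \neq S$ then $\bigcap_{n \ge 0}(S_+ + nz)$ cannot contain a translate $S_+ + u$ --- i.e.\ a quantitative statement that a strictly positive translate eventually sweeps past every other translate --- which is exactly the content of the harder lemmas among \cite[Lemmas 3.8--3.12]{BBI2} and does not follow from the soft topology you have used so far.
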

\begin{proof}
    This is shown by adapting
    the proofs of Lemmas 3.8 to 3.12
    in \cite{BBI2}.
\end{proof}
The set $\Ku$ defined in \cref{sec:csfiber} is a closed subset of
$\Om \sans \delcs$.
Let $Q$ be the compact set defined by
intersecting $\Ku$ with
a cube of the form $[0,N] \times [0,N] \times I$
for some large $N$.
Since $Z_0$ is a full rank subgroup of $\bbZ^2$,
$N$ may be chosen large enough that
any $x  \in  \Ku$ can be written as
$x = q + z$ with $q  \in  Q$ and $z  \in  Z_0$.

\begin{lemma}
    There is $z_0 \in Z_0$ such that
    $Q \subset S_+ - z_0$ and
    $Q \subset S_- + z_0$.
\end{lemma}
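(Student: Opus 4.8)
The plan is to exploit the inclusion monotonicity of the half-spaces $S_+ + z$ and $S_- + z$ under translation by $Z_0$, together with the covering property of \cref{lemma:Scover} and the compactness of $Q$. First I would use \cref{lemma:Scover}, which tells us that $Q \subset \Om \sans \delcs$ lies in $S_+ + Z_0$; since $Q$ is compact and the sets $\{S_+ + z\}_{z \in Z_0}$ form an open cover of $Q$ (each $S_+ + z$ is open), there is a finite subcover, so $Q \subset \bigcup_{z \in F} (S_+ + z)$ for some finite $F \subof Z_0$. Using the previous lemma, the family $\{S_+ + z : z \in Z_0\}$ is totally ordered by inclusion in the direction governed by the sign of $\pi(z)$: if $\pi(z_1) \le \pi(z_2)$ then $S_+ + z_1 \supset S_+ + z_2$ after translating appropriately. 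Hence among the finitely many translates $S_+ + z$, $z \in F$, there is a largest one, say $S_+ + z_+$, and then $Q \subset S_+ + z_+$.

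Next I would run the symmetric argument on the other side. By \cref{lemma:Scover}, $Q$ is also contained in $S_- + Z_0$; by the same finite-cover plus total-ordering argument applied to the translates of $S_-$ (whose inclusion order runs opposite to that of $S_+$, again by the linear map $\pi$), there is $z_-\in Z_0$ with $Q \subset S_- + z_-$. It remains to combine the two into a single element $z_0$. Set, for instance, $z_0 = z_+$; I need to verify $Q \subset S_- + z_0$ as well. This follows because $S_- + z_0 \supset S_- + z_-$ whenever $\pi(z_0 - z_-)$ has the correct sign, and if it has the wrong sign I instead replace $z_+$ by a translate $z_+ + Nz_*$ for $z_*$ a generator with $\pi(z_*) \ne 0$ and $N$ large, which only enlarges $S_+ + z_0$ (hence preserves $Q \subset S_+ - z_0$ after renaming) while eventually forcing $Q \subset S_- + z_0$. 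More cleanly: by the total ordering, both $\{S_+ + z\}$ and $\{S_- + z\}$ are nested chains indexed monotonically by $\pi(z) \in \pi(Z_0) \subset \bbR$, and $\bigcup_z (S_+ + z) = \tM = \bigcup_z (S_- + z)$ up to $\delcs$, so for $\pi(z)$ sufficiently large $S_+ - z$ and $S_- + z$ each contain the fixed compact set $Q$; choosing $z_0$ with $\pi(z_0)$ large enough on the appropriate side does both at once. (Note the statement has $S_+ - z_0$ and $S_- + z_0$, reflecting that the two chains grow in opposite directions.)

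The main obstacle I anticipate is bookkeeping the orientation conventions so that the signs in ``$Q \subset S_+ - z_0$'' and ``$Q \subset S_- + z_0$'' come out consistently with the linear functional $\pi$ of the preceding lemma: one must be careful that $\pi(z) \ge 0 \Rightarrow S_+ + z \subset S_+$ means translating in the $+\pi$ direction \emph{shrinks} $S_+$, so to \emph{capture} $Q$ inside a translate of $S_+$ one moves in the $-\pi$ direction, whence the minus sign on $z_0$; dually $S_-$ grows in the $+\pi$ direction, giving the plus sign. Once the direction is pinned down, the argument is just: finite subcover of the compact $Q$, extract the extreme translate from the nested chain, and check one can pick the \emph{same} $z_0$ for both sides by pushing $\pi(z_0)$ far enough — which works since $\pi(Z_0)$ is a nontrivial subgroup of $\bbR$ and hence unbounded in both directions. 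I expect this to take only a few lines once the conventions are fixed.
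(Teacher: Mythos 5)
Your proposal is correct and follows essentially the same route as the paper: use \cref{lemma:Scover} plus compactness of $Q$ to extract a finite subcover by translates, then use the nesting given by $\pi$ to pass to the extreme translate, observing that both $S_+ - z$ and $S_- + z$ grow monotonically as $\pi(z)$ increases so a single $z_0$ (with $\pi(z_0)$ taken as the maximum over both finite subcovers) works for both inclusions. The paper compresses the second half into ``the case for $S_-$ is analogous,'' but your sign bookkeeping is exactly what that remark hides.
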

\begin{proof}
    By \cref{lemma:Scover},
    $\{S_+ - z : z \in Z_0 \}$
    is an open cover of $\Ku$, and so some
    finite subset
    \begin{math}
        \{S_+ - z_1, \, \ldots \, , S_+ - z_n\}
    \end{math}
    covers the compact set $Q$.
    Take $z_0$ such that $\pi(z_0)  \ge  \pi(z_i)$ for all $i$.
    The case for $S_-$ is analogous.
\end{proof}
By abuse of notation, if $p=(v,s) \in \bbR^2 \times I$
define $\pi(p)=\pi(v)$.

\begin{lemma}
    There is $r>0$ such that
    $\pi(x) > r$ implies $x \in S_+$ and
    $\pi(x) < -r$ implies $x \in S_-$
    for all $x \in \Ku$. %
\end{lemma}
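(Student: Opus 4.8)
The plan is to exploit that $Q$ is, up to $Z_0$--translation, a compact piece that sweeps out all of $\Ku$, and to convert the translation--monotonicity of $\pi$ from the previous lemma into the desired half--space statement by writing each point of $\Ku$ in the form $x = q + z$ with $q \in Q$ and $z \in Z_0$. First I would set $C := \sup_{q \in Q} |\pi(q)|$, which is finite since $Q$ is compact and $\pi$ is continuous, and take $r := C + |\pi(z_0)| + 1 > 0$, where $z_0 \in Z_0$ is the element produced in the preceding lemma.

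The case $\pi(x) > r$ comes first. Given $x \in \Ku$, write $x = q + z$ with $q \in Q$, $z \in Z_0$. From $Q \subset S_+ - z_0$ we get $q + z_0 \in S_+$, and $x = (q + z_0) + (z - z_0)$. Since $\pi(z - z_0) = \pi(x) - \pi(q) - \pi(z_0) \ge \pi(x) - C - \pi(z_0) > 0$, the previous lemma gives $S_+ + (z - z_0) \subset S_+$, hence $x \in S_+$. The case $\pi(x) < -r$ is dual, but for it I first need the analogous inclusion for $S_-$, namely that $\pi(w) \le 0$ implies $S_- + w \subset S_-$ for $w \in Z_0$. I would obtain this from the second clause of the previous lemma by complementation: $S_+ \subset S_+ + w$ gives $(S + w) \cup (S_- + w) = \tM \without (S_+ + w) \subset \tM \without S_+ = S \cup S_-$; since $S$ is a properly embedded plane separating $\tM$ (\cref{prop:csdivide}), every point of $S$ is approached from $S_+$, so an open set disjoint from $S_+$ cannot meet $S$, and thus the open connected set $S_- + w$ (the image of $S_-$ under the deck transformation realizing $w$) is contained in $S_-$. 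Granting this, for $x \in \Ku$ with $\pi(x) < -r$ I write $x = q + z$ as before; now $Q \subset S_- + z_0$ yields $q - z_0 \in S_-$ and $x = (q - z_0) + (z + z_0)$, while $\pi(z + z_0) = \pi(x) - \pi(q) + \pi(z_0) \le \pi(x) + C + \pi(z_0) < 0$, so $S_- + (z + z_0) \subset S_-$ and hence $x \in S_-$.

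There is no real obstacle here: the lemma is essentially immediate from the previous one together with the compactness of $Q$, and the argument is just bookkeeping with the translations $z, z_0$ and the linear functional $\pi$. The only step that is not completely automatic is the deduction of the $S_-$--inclusion from the stated $S_+$--inclusion in the second case, which relies on $S$ cutting $\tM$ cleanly into the two open half--spaces $S_+$ and $S_-$ with common frontier $S$ — exactly the content of \cref{prop:csdivide}.
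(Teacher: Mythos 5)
Your argument is correct and is essentially the paper's proof: both decompose $x = q + z$ with $q \in Q$, use $Q \subset S_+ - z_0$ (resp.\ $S_- + z_0$), and apply the translation-monotonicity of $\pi$ from the preceding lemma, differing only in the cosmetic choice of $r$. Your extra step deducing the $S_-$-inclusion $\pi(w) \le 0 \Rightarrow S_- + w \subset S_-$ by complementation is a valid filling-in of a detail the paper leaves implicit by symmetry.
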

\begin{proof}
    Choose $r > 0$ such that
    $r - \pi(q) > \pi(z_0)$ for all $q  \in  Q$.
    Any $x \in \Ku$ may be written as
    $x = q + z$ with $q \in Q$ and $z \in Z_0$.
    If $\pi(x) > r$, then $\pi(z-z_0)  \ge  0$ and
    \begin{math}
        x \in Q + z \subset S_+ - z_0 + z \subset S_+.
    \end{math}  \end{proof}
\begin{lemma}
    There is $R > 0$ such that
    if $p,q \in \Ku$
    lie on the same leaf of $\Fcs$,
    then $|\pi(p)-\pi(q)| < R$.
\end{lemma}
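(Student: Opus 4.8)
The plan is to feed the two preceding lemmas of this appendix --- the one producing $z_0 \in Z_0$ with $Q \subof S_+ - z_0$ and $Q \subof S_- + z_0$, and the one producing $r>0$ such that for $x \in \Ku$ one has $\pi(x) > r \Rightarrow x \in S_+$ and $\pi(x) < -r \Rightarrow x \in S_-$ --- into a short argument, after first translating so that the leaf meets the compact set $Q$.

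First I would reduce to the case where $L$ meets $Q$. Given $p,q \in \Ku$ lying on a leaf $L$ of $\Fcs$, recall that $N$ was chosen so that every point of $\Ku$ has the form $Q + z$ with $z \in Z_0$; write $p = p_0 + z$ with $p_0 \in Q$ and $z \in Z_0$. Since $\Fcs$ and $\Ku$ are invariant under the deck transformation realising translation by $z$, and since $\pi(p) - \pi(q) = \pi(p - z) - \pi(q - z)$, it suffices to bound $|\pi(p - z) - \pi(q - z)|$; replacing $(L,p,q)$ by $(L - z,\, p - z,\, q - z)$ I may thus assume $p = p_0 \in L \cap Q$.

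Then I would observe that $Q \subof S_+ - z_0$ gives $p_0 + z_0 \in S_+$, while $Q \subof S_- + z_0$ gives $p_0 - z_0 \in S_-$. Both $L + z_0$ and $L - z_0$ are leaves of $\Fcs$. Since no two leaves of $\Fcs$ topologically cross and each leaf is a properly embedded plane separating $\tM$ (\cref{prop:csdivide}), a leaf that meets $S_+$ is disjoint from $S_-$, and conversely; hence $L + z_0 \subof S_+ \cup S$ and $L - z_0 \subof S_- \cup S$. Now for any $x \in \Ku \cap L$, the point $x + z_0$ lies in $\Ku$ and in $S_+ \cup S$, so $x + z_0 \notin S_-$ and the lemma above gives $\pi(x + z_0) \ge -r$, i.e. $\pi(x) \ge -r - \pi(z_0)$; symmetrically $x - z_0 \notin S_+$ forces $\pi(x) \le r + \pi(z_0)$. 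Thus $\pi$ varies by at most $2r + 2\pi(z_0)$ on $\Ku \cap L$, and $R := 2r + 2|\pi(z_0)| + 1$ works.

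The step I expect to be the main obstacle is the claim that a leaf of $\Fcs$ meeting $S_+$ lies in $S_+ \cup S$: this is where the no-crossing axiom of branching foliations is genuinely used, together with the separation property of \cref{prop:csdivide}. It is a complementary-region statement in the spirit of \cite[Section 3]{BBI2}, whose techniques are already being invoked elsewhere in this appendix. The remaining steps are routine bookkeeping with the $Z_0$-action and the linearity of $\pi$.
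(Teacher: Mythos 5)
Your proof is correct and follows essentially the same route as the paper's: translate by an element of $Z_0$ so the leaf meets the compact set $Q$, use the no-topological-crossing property to conclude the (translated) leaf is disjoint from one complementary side of $S$, and then apply the contrapositive of the preceding lemma to trap $\pi$ on $\Ku \cap L$ in an interval of length about $2(r+\pi(z_0))$. The only difference is cosmetic bookkeeping (you work with $L \pm z_0$ where the paper works with $S_\pm \mp z_0$, and your constant carries an extra $|\cdot|$ and $+1$ for safety).
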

\begin{proof}
    Without loss of generality, shift $p$ and $q$ by an element of $Z_0$
    and assume $q \in Q$.
    Let $S'$ be the leaf containing both $p$ and $q$.
    Since $S'$ intersects $S_+ - z_0$ and leaves do not topologically cross,
    $S'$ is disjoint from $S_- - z_0$ and so $\pi(p),\pi(q) > -r-\pi(z_0)$.
    Similarly, $\pi(p),\pi(q) < r+\pi(z_0)$.
    Take $R = 2(r+\pi(z_0))$.
\end{proof}
Up to rescaling $\pi$ so that $R < 1$, this concludes the proof of
\cref{prop:linfoln}.

\bigskip

\acknowledgement
The authors thank Nicolas Gourmelon for helpful conversations.

\bibliographystyle{alpha}
\bibliography{dynamics}

\end{document}